\newif\ifspringer
\springertrue
\springerfalse

\ifspringer
\documentclass[graybox]{svmult}
\else
\documentclass[a4paper,11pt]{article}
\usepackage{theorem}         
\fi


\usepackage{amsmath}
\usepackage{amsfonts}
\usepackage{amssymb}         
\usepackage{amsopn}
\usepackage{graphicx}        
\usepackage{mathrsfs}		
\usepackage{enumerate}
\usepackage{todonotes}
\usepackage{mathptmx}       
%
\usepackage{makeidx}         
\usepackage{graphicx,epsfig,color}
\graphicspath{{img}}
\usepackage{multicol}        
\usepackage{multirow}
\usepackage[bottom]{footmisc}
\usepackage{url}
\usepackage{amsfonts}
\usepackage{wrapfig}
\usepackage{amsmath}
\usepackage{algorithm}
\usepackage{algorithmic}
\usepackage{soul}
\usepackage{tikz}
\usetikzlibrary{graphs,quotes}

\ifspringer
\else
\usepackage{tocbasic}
\DeclareTOCStyleEntry[beforeskip=-.3em]{tocline}{section}
\DeclareTOCStyleEntry[beforeskip=-.5em]{tocline}{subsection}
\DeclareTOCStyleEntry[beforeskip=-.7em]{tocline}{subsubsection}
\fi

\newcommand{\transpose}[1]{{#1}^\top}
\newcommand{\ran}{\mathsf{ran}}
\newcommand{\dom}{\mathsf{dom}}
\newcommand{\trace}[1]{\mathsf{tr}(#1)}
\newcommand{\diag}[1]{\mathsf{diag}(#1)}
\newcommand{\DD}{\mathbf{D}_n}
\newcommand{\dDD}{\mathbf{D}_n^\ast}
\newcommand{\nadj}[1]{\widehat{\mathsf{adj}}(#1)}
\newcommand{\gray}[1]{{\color{gray} #1}}


\ifspringer
\makeindex             
\else
\newtheorem{result}{\ }[section]
\theoremstyle{changebreak}                

\newtheorem{proposition}[result]{Proposition}

\newtheorem{example}[result]{Example}

\newenvironment{proof}
 {{\sl Proof.}\hspace*{1 ex}}%
 {{\nopagebreak\hspace*{\fill}$\Box$\par\vspace{12pt}}}

\setlength{\parindent}{0.5cm}
\setlength{\parskip}{0.3cm}
\setlength{\oddsidemargin}{-1cm}
\setlength{\textwidth}{18cm}
\setlength{\textheight}{26cm}
\setlength{\footskip}{1cm}
\setlength{\topmargin}{-2cm}
\fi


\begin{document}

\ifspringer
\title*{Distance geometry with and without the graph\thanks{This survey was partly supported by the ANR-24-CE23-1621-03 project ``EVARISTE''.}}

\titlerunning{Distance geometry with and without the graph}
\author{Leo Liberti \and Carlile Lavor}
\authorrunning{Liberti, Lavor}
\institute{Leo Liberti\at LIX CNRS Ecole Polytechnique, Institut Polytechnique de Paris, 91128 Palaiseau, France \\ \email{liberti@lix.polytechnique.fr} \and Carlile Lavor\at IMECC, University of Campinas, 13081-970, Campinas, Brazil \\ \email{clavor@unicamp.br}}

%
\maketitle
\else

\thispagestyle{empty}
\begin{center} 
{\LARGE Distance geometry with and without the graph}
\par \bigskip
{\sc Leo Liberti${}^1$, Carlile Lavor${}^2$}
\begin{minipage}{15cm}
\begin{flushleft}
\par \bigskip
 {\small
   \begin{enumerate}
   \item {\it LIX CNRS, \'Ecole Polytechnique, Institut Polytechnique de Paris, F-91128 Palaiseau, France} \\ Email:\url{liberti@lix.polytechnique.fr}
   \item {\it IMECC, University of Campinas, 13081-970, Campinas, Brazil} \\ Email:\url{clavor@unicamp.br}
   \end{enumerate}
 }
\end{flushleft}
\end{minipage}
\par \medskip \today
\end{center}
\par \bigskip

\fi


\ifspringer
  \abstract{%
\else
  \begin{abstract}
\fi
    We survey theoretical, algorithmic, and computational results at the intersection of distance geometry problems and mathematical programming, both with and without adjacencies as part of the input. While mathematical programming can solve large-scale distance geometry problems with adjacencies, they are severely challenged in the absence thereof.
\ifspringer
  \keywords{mixed-integer nonlinear programming, conic programming, proteins.}}
\else
  \end{abstract} 
\fi

\vspace*{1cm}

\ifspringer\else
\renewcommand{\baselinestretch}{0.75}\normalsize
\tableofcontents
\renewcommand{\baselinestretch}{1.0}\normalsize

\section*{Preface}
\fi

\noindent This is one of many surveys we have co-authored about Distance Geometry (DG) \cite{mdgpsurvey,dmdgpejor,dgp-sirev,dg-4or,six,idgpsurvey,dgds} over the years: the reader might then wonder what we could possibly survey that we have not already surveyed seven times in the last fifteen years.

In \cite{mdgpsurvey} we reviewed many continuous solution methods for the DG problem (which we define below) and a few discrete ones, among which ABBIE \cite{Hen95} and Branch-and-Prune \cite{lln5}, the most prominent application of DG being that of proteins. In \cite{dmdgpejor} we surveyed several variants of the DG problem related to the Branch-and-Prune algorithm, the only DG application being (again) proteins. In \cite{dgp-sirev} we surveyed what we thought was most of the field of DG: many problem variants, applications, theoretical results, and methods, among which the Branch-and-Prune and proteins. When \cite{vetterli} was published, however, we discovered we had neglected a large part of DG field. The survey \cite{dg-4or}, which was co-authored by only one of us, discusses methods for assigned and unassigned DG problems, focusing on rigidity, coordinates computation, nanostructures, and proteins. The survey \cite{six} is completely different, as it focuses on six deep theorems about DG in the history of mathematics, from Heron of Alexandria to Kurt G\"odel. The paper \cite{idgpsurvey} reviews a very specific aspect of the Branch-and-Prune algorithm, namely the discretization of the search space, but it also proposes new ideas, and so it is only partly a survey. Finally, \cite{dgds}, which was written by only one of us, surveys the interfaces between DG and other fields of applied mathematics and theoretical computer science: mathematical programming, linear algebra, computational complexity, natural language processing, artificial intelligence, machine learning, data science, and statistics.

Like \cite{dg-4or}, this survey focuses on the interplay between assigned and unassigned DG problems, and considers proteins as applications. Unlike \cite{dg-4or}, however, this survey explores mathematical programming formulations for both problems, discusses the relationship between ``unassigned'' and ``assigned'' versions of the problem, and presents a comparative computational benchmark for most of the proposed formulations. We hope that this unconventional computational angle will make mathematical programming formulations come to life, instead of remaining purely theoretical constructs.

The reader might also wonder why we are so keen on writing surveys. We, as well as many of our co-authors, have had the good fortune to be invited to give talks in our careers. After delivering the talk, we were often asked (by whomever had invited us) to write a survey. This happened with \cite{mdgpsurvey,dmdgpejor,six}. We were also approached by some Editors-in-Chief of journals that routinely commission surveys: this happened with \cite{dgp-sirev,dg-4or,dgds}. The current survey was invited by Meera Sitharam and Tony Nixon, who co-organized several thematic programs at the Fields Institute. And the odd one out \cite{idgpsurvey}, as we already explained, is not quite just a survey. We could therefore answer ``it's not \textit{us}, it's \textit{them}!''. But the truth is that writing surveys teaches us a lot: by re-organizing the material in a way that makes sense for us (and, we hope, for the reader too) we see the whole mountain, where previously we had only glimpsed at some peaks. So it is not quite them, it is really just us. 

\section{Introduction}
\label{s:intro}
DG problems come in two broad forms: assigned and unassigned. Their input is always a set of distances, but the word ``distance'' is ambiguous. On the one hand, ``one meter'', or ``two-point-six {\AA}ngstr\"om'' are distances. On the other hand, ``there are 542 km between Toronto and Montreal'' is also a distance. These two concepts are related, but their formalization is very different. For the first, a positive scalar suffices. For the second, one also needs two distinct points: more precisely, we assign the scalar ($542$) to an edge $\{\mathsf{Toronto}, \mathsf{Montreal}\}$. Since a set of edges defines a graph, DG problems can be defined on a graph (distance scalars are assigned to edges) or without a graph (distance scalars are ``unassigned'').

The output of DG problems, however, is always the same: we ask for a set of points in a Euclidean space such that each given distance appears exactly once as the Euclidean distance between two of the points. The points correspond to vertices of the graph in the assigned form; in the unassigned form, the number of points $n$ is given as part of the input. This set of points is called \textit{realization}.

\label{unassignedform} Take the unassigned form. Once a realization is given that is compatible with the distance scalars, it is possible to construct a graph, since each scalar is now the length of a segment between two points. This tells us that the assigned form is easier than the unassigned one, since the graph is already given as part of the input. In the unassigned form we have to compute an assignment of distance scalars to edges as well as constructing a realization compatible with this assignment. This plays out in problem formulations as well: from each formulation of the assigned form, we can derive a more complicated formulation of the unassigned form by adding some assignment variables. 

\subsection{Problem definitions}
\label{s:probdefn}
We now give the formal decision version of the two problem forms, with and without the graph. For an integer $p$ we let $[p]=\{1,\ldots,p\}$. We recall that an assignment $\alpha:S\to T$ is an injective but not necessarily surjective function. 
\begin{itemize}
\item \textsc{Distance Geometry Problem} (DGP). Given an integer $K>0$ and a simple undirected edge-weighted graph $G=(V,E,d)$, determine whether there exists a realization $x:V\to\mathbb{R}^K$ such that
  \begin{equation}
    \forall \{u,v\}\in E \quad \|x_u-x_v\|_2^2 = d_{uv}^2, \label{dgp}
  \end{equation}
  where $d_{uv}$ is the weight of the edge $\{u,v\}$ and $x_v\in\mathbb{R}^K$ is the position vector of vertex $v$.
\item \textsc{Unassigned Distance Geometry Problem} (UDGP). Given two positive integers $K,n$ and a list $L=(\delta_\ell\;|\;\ell\le m)$ of positive scalars, determine whether there exists an assignment $\alpha:[m]\to [n]\times [n]$ and a realization $x:[n]\to\mathbb{R}^K$ such that:
  \begin{equation}
    \forall \ell\le m \quad \|x_{\alpha_{\ell_1}}-x_{\alpha_{\ell_2}}\|_2^2 = \delta_{\ell}^2, \label{udgp}
  \end{equation}
  where $\alpha(\ell)=(\alpha_{\ell_1},\alpha_{\ell_2})$ for all $\ell\le m$.
\end{itemize}
We remark that the UDGP is invariant to the order of $L$. We write Eq.~\eqref{dgp}-\eqref{udgp} in squared form because they avoid the square root, which is computationally problematic. We also note that, if an assignment $\alpha$ is given, any UDGP instance becomes a DGP one, since the assignment allows the construction of a simple undirected graph edge-weighted by $L$.

As stated, the DGP and UDGP are decision problems, susceptible of a single bit (YES/NO) output, with a certificate for YES instances. This certificate, for YES instances, is a realization $x$ for the DGP, and a couple (realization $x$, assignment $\alpha$) for the UDGP. DGP instances may be NO for two reasons: the given input is incompatible with any possible realization in $K$ dimensions, or the given input is incompatible with realizations in \textit{any} dimensions (or with the metric axioms). UDGP instances may be NO also because every assignment leads to an infeasible DGP instance.

We do not consider other distances than Euclidean in this survey, because our chosen applications (molecules) are based on Euclidean distances (but see \cite{oneinfnorm-lncs}): accordingly, we dispense with the problem name ``Euclidean DGP'' (EDGP), since there is no confusion. Nor do we consider imprecise versions of DGP and UDGP explicitly, where $d$ and $\delta$ are real intervals instead of scalars \cite{mdgpsurvey,idgpsurvey,zoo}, mainly because the solution methods we discuss --- all based on solving formulations with an off-the-shelf solver --- already cater for this type of imprecision, also known as ``experimental error'', by turning the strict feasibility enforced by Eq.~\eqref{dgp} into optimization by means of a penalty function. 

In the following we shall encode ``realizations'' of the DGP or the UDGP as $n\times K$ real matrices: thus, a realization can be precise, imprecise, approximate, random, or even wrong. In short, realizations will simply be whatever solution of the DGP/UDGP we were able to find. 

\subsection{Complexity}
\label{s:complexity}
The DGP (and, respectively, the UDGP) is the inverse problem of the following trivial problem: given a realization $x\in\mathbb{R}^{nK}$ compute a subset of distances adjacent to pairs of points (and, respectively, compute a subset of distance values). While the direct problems are trivial, the inverse problems are \textbf{NP}-hard. The DGP is \textbf{NP}-hard (weakly, by reduction from \textsc{partition} to the case $K=1$, and strongly, by reduction from \textsc{3sat} for any fixed $K$ \cite{saxe79}). The UDGP is \textbf{NP}-hard even when $m=n(n-1)/2$ \cite[Thm.~4.2]{skiena}, i.e.~when all possibile distances are given, yielding a complete graph with any assignment $\alpha$: by contrast, the DGP is tractable on complete graphs \cite{sippl,alencar2}. The fact that \textbf{NP}-hardness of the DGP is proved on sparse graphs, while that of the UDGP is proved on instances corresponding to complete graphs, may be indicative of the respective application settings: DGPs are often solved on (sparse) disk graphs, while UDGPs on complete (or almost complete) lists of distance scalars\footnote{Even more-than-complete lists have been considered in solving UDGPs: in \cite{liga}, some experiments have been conducted on a redundant list of distance scalars, i.e.~having length exceeding $m$.}.

\subsection{Applications}
\label{s:apps}
The typical DGP applications are: synchronization of clocks in sensor network protocols \cite{singer4} ($K=1$), localization of sensor networks \cite{biswasacm} ($K=2$), reconstruction of the shape of proteins from distance data \cite{CH88,bipbip} ($K=3)$, control of underwater autonomous vehicles \cite{bahr} ($K=3$), transformation of words and sentences into vectors in natural language processing \cite{dg4wv,dgpwords} (any $K$).

The typical UDGP applications are: the ``partial digest'' methodology in DNA sequencing\footnote{Here is a summary description of the partial digest problem. Suppose one wants to determine thes sites of a DNA strand at which a certain sequence $\sigma$ over $\mbox{A},\mbox{C},\mbox{G},\mbox{T}$ occurs. One employs a specific enzyme that breaks the strand at sites where $\sigma$ appears, and measures the lengths of the pieces. One then solves a UDGP problem in one dimension in order to retrieve the sites at which the DNA strand was broken: those are the sites containing $\sigma$.} \cite{skiena2} ($K=1$), the reconstruction of crystal structure from interatomic distances obtained from X-ray crystallography \cite{patterson} ($K\in\{2,3\}$), the reconstruction of the shape of small inorganic molecules, such as e.g.~nanostructures, from distance data \cite{liga,dg-4or} ($K=3$), and the reconstruction of the shape of proteins from distance data \cite{berger} ($K=3$).

In these lists of DGP and UDGP applications, we note immediately the preponderance of molecular applications, and the fact that the same application (shape of proteins) occurs in both. Molecular applications are frequent in DG because ``looking'' at a molecule, given its nano-scale, means irradiating it and actually look at the diffraction of the radiation as it passes through the molecule: either as a crystal, as in X-ray crystallography, or in solution, as in nuclear Overhauser effect spectroscopy (NOESY), or frozen in a thin layer of ice, as in Cryo-EM. The result of the analysis, the radiation spectrum, is represented as a multivariate function $f$ in $1,2,3$ or even more dimensions. Some of the peaks of $f$ indicate that two or more atoms of a certain type are at some given distance value (not all peaks are meaningful, however \cite{patterson}).

To clarify, let us take a spectrum function $f(a,b)$ of two dimensions $a,b$. Supposing that $a,b$ denoted atom IDs (e.g.~$\mathrm{H}^9, \mathrm{C}^{17}$), one could simply read a peak in $f(a,b)$ as a distance between two well-defined atoms, with the distance value proportional to the integral of $f$ over a neighbourhood including the peak. Instead, peaks depend on other entities that are correlated with distance values in more complicated ways.
\begin{itemize}
\item In the case of X-ray experiments on nanostructures, one can only obtain a probability of finding a certain distance between two atoms (this probability function is called \textit{pair distribution function}, or sometimes simply PDF, in the relevant literature). Nonetheless, this yields unassigned but relatively precise and complete distance value measurements in nanostructures, which is why nanostructures are a typical UDGP application.
\item 
In the case of NOESY experiments on proteins in solution, distance values are implied by resonance effects at some given frequencies specific to each atomic nucleus. These frequencies are known as the ``chemical shifts'' of an atom: a peak in chemical shift space may denote the closeness of two atoms, and their distance value can be estimated. The assignment of the values to atom pairs is also estimated, see Sect.~\ref{s:proteins} below. These estimations yield considerable errors \cite{berger}, which is why the determination of the shape of proteins is an application of both the DGP and the UDGP \cite{wuthrich,wuthrich_nobel}. 
\end{itemize}

\subsubsection{The case of proteins}
\label{s:proteins}
Chemical shifts depend on the atom type and its environment. Unfortunately, many atoms in a protein have very close chemical shifts, so that, for practical purposes, these atoms share the same chemical shift. This means that a single peak $p=(a,b)$ of intensity $f(a,b)$ is assigned ambiguously to all of the atoms in the set $A^p_a$ with the same chemical shift $a$, and all of the atoms in the set $B^p_b$ with the same chemical shift $b$ at the peak $p$. The intensity also depends on the number of bonds in $A^p_a\times B^p_b$ that actually occur in the protein with distance $d$ proportional to $p$, which further complicates matters.

Luckily, though, proteins have a periodic backbone with a simple known structure. By a mixture of experimental and algorithmic procedures, this structure makes it possible to derive an assignment of distance values to edges: this justifies the appearance of proteins in the DGP applications list. These procedures are imperfect, however, and a systematic assignment error still occurs \cite{berger}: this requires a re-assignment phase, which justifies the appearance of proteins in the UDGP applications list.

\ifspringer
\subsection{Contents of this survey}
The rest of this survey is organized as follows. Sect.~\ref{s:theory} introduces some basic theoretical results. In Sect.~\ref{s:mp} we give a short introduction to mathematical programming. In Sect.~\ref{s:form} we look at DGP formulations: for each we consider relaxations, approximations, and the derived UDGP formulations. In Sect.~\ref{s:matform} we look at matrix formulations and their resulting approximations. In Sect.~\ref{s:results} we perform extensive computational evaluations of all of the DGP and UDGP formulations. Sect.~\ref{s:concl} concludes the survey.
\fi

\section{Basic results}
\label{s:theory}
First, we make the statement about the the relative difficulty betwen UDGP and DGP (paragraph ``Take the unassigned form [\dots]'' in Sect.~\ref{s:intro} on p.~\pageref{unassignedform}) more precise. For a function $f$, $\dom f$ is the domain and $\ran f$ the range of $f$. Note that the sequence $\delta$ can be seen as a function $[m]\to\mathbb{R}_+$.
\begin{proposition}
  \label{prop:udgp2dgp}
  Let $\mathscr{U}=(K,n,L)$ be an instance of the UDGP, and $\alpha:[m]\to[n]\times [n]$ be an assignment. Consider the graph $G_\alpha=(V,E,d)$ where $V=[n]$, $E=\ran\alpha$, $d=\delta\circ\alpha^{-1}$ and the corresponding DGP instance $\mathscr{D}_\alpha=(K,G_\alpha)$. If $\mathscr{U}$ is YES, then $\mathscr{D}_\alpha$ is YES. If $\mathscr{U}$ is NO, $\mathscr{D}_\alpha$ is NO for all possible $\alpha$. 
\end{proposition}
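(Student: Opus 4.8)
The plan is to reduce everything to a single structural observation: under the construction of $G_\alpha$, the defining system of the UDGP (Eq.~\eqref{udgp}) and the defining system of the DGP on $G_\alpha$ (Eq.~\eqref{dgp}) are literally the same set of equations in the same unknown realization $x\colon[n]\to\mathbb{R}^K$. First I would record this observation explicitly. For each index $\ell\le m$, write $\{u,v\}=\alpha(\ell)$; since $\alpha$ is injective it is a bijection from $[m]$ onto $E=\ran\alpha$, so $\alpha^{-1}$ is well defined and $d_{uv}=(\delta\circ\alpha^{-1})(\{u,v\})=\delta_\ell$. Then the UDGP constraint $\|x_{\alpha_{\ell_1}}-x_{\alpha_{\ell_2}}\|_2^2=\delta_\ell^2$ is identical to the DGP constraint $\|x_u-x_v\|_2^2=d_{uv}^2$, and because $\ell\mapsto\alpha(\ell)$ is a bijection, quantifying over $\ell\le m$ is the same as quantifying over $\{u,v\}\in E$. (A brief well-definedness check belongs here: one assumes $\alpha$ produces distinct loopless unordered pairs, so that $\alpha^{-1}$, hence $d$, is well defined and $G_\alpha$ is a simple graph.) Thus a realization $x$ satisfies Eq.~\eqref{udgp} for $\alpha$ if and only if the same $x$ satisfies Eq.~\eqref{dgp} on $G_\alpha$.

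Given this, the NO direction is immediate and I would prove it by contraposition. Suppose $\mathscr{D}_\alpha$ is YES for some $\alpha$, witnessed by a realization $x$ satisfying Eq.~\eqref{dgp} on $G_\alpha$. By the observation, the same $x$ together with the assignment $\alpha$ satisfies Eq.~\eqref{udgp}, so the pair $(x,\alpha)$ certifies that $\mathscr{U}$ is YES. Contrapositively, if $\mathscr{U}$ is NO then no assignment can make $\mathscr{D}_\alpha$ YES, which is exactly the claim that $\mathscr{D}_\alpha$ is NO for all possible $\alpha$.

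For the YES direction I would argue constructively: if $\mathscr{U}$ is YES then, by definition, there exist an assignment $\alpha$ and a realization $x$ satisfying Eq.~\eqref{udgp}, and feeding this witnessing $\alpha$ into the construction, the observation shows $x$ satisfies Eq.~\eqref{dgp} on $G_\alpha$, so $\mathscr{D}_\alpha$ is YES. The main point to get right here, and the step I expect to be the real subtlety, is the quantifier on $\alpha$: the conclusion holds for the assignment that witnesses feasibility of $\mathscr{U}$, which is the reading consistent with the statement, since the emphatic ``for all possible $\alpha$'' appears only in the NO direction. For an arbitrarily chosen $\alpha$ the implication can genuinely fail: a mismatched assignment may place three weights on a triangle that violate the triangle inequality (e.g.\ values $1,2,5$ drawn from the distance list of a feasible collinear configuration), rendering $G_\alpha$ infeasible in every dimension even though $\mathscr{U}$ is YES. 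I would therefore state the YES direction in terms of the witnessing assignment rather than an arbitrary one, so that the two halves of the proposition sit together coherently.
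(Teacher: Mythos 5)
Your proof is correct and follows essentially the same route as the paper's: both establish that, under the construction of $G_\alpha$, the system Eq.~\eqref{udgp} with assignment $\alpha$ coincides with Eq.~\eqref{dgp} on $G_\alpha$, then obtain the YES direction from the witnessing realization and the NO direction by contradiction/contraposition. Your explicit treatment of the quantifier on $\alpha$ --- that the YES direction concerns the assignment witnessing feasibility of $\mathscr{U}$, not an arbitrary one --- is a point the paper's proof leaves implicit (and which its own Example~\ref{eg:DnoUyes} confirms), so your version is if anything slightly more careful.
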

\begin{proof}
  Assume $\mathscr{U}$ is YES. The graph $G_\alpha$ has $[n]$ as vertex set $V$, and edges $\{u,v\}$ for $u<v\le n$ whenever there is $\ell\le m$ with $\alpha(\ell)=(u,v)$. Moreover, each edge $\{u,v\}$ is weighted by $\delta_\ell$ such that $\alpha(\ell)=\{u,v\}$. By Eq.~\eqref{udgp} and the definition of $G_\alpha$, $x$ also satisfies Eq.~\eqref{dgp}, i.e.~$\mathscr{D}_\alpha$ is YES.  Assume now that $\mathscr{U}$ is NO, and suppose, to aim at a contradiction, that there is $\alpha$ such that $\mathscr{D}_\alpha$ is YES: then it has a realization $x$ for $G_\alpha$, which, by Eq.~\eqref{dgp} and by definition of $G_\alpha$, is also a realization of $\mathscr{U}$, against the assumption. 
\end{proof}
We note that Prop.~\ref{prop:udgp2dgp} introduces a mapping from UDGP to DGP instances, namely $(\mathscr{U},\alpha)\to\mathscr{D}_\alpha$. We also emphasize the notation $G_\alpha$ in order to refer to the the graph corresponding to the assignment $\alpha$ from a UDGP instance, as in Prop.~\ref{prop:udgp2dgp}. 

\begin{example}
  Consider the UDGP with input $(K=2,n=3,L=(3,1,1))$. Since $K=2$ and $n=3$ we want a triangle in the plane. But since the distance scalars $3,1,1$ violate the triangular inequality, the instance is infeasible. Observe that $L$ is a \textit{list} rather than a \textit{set} of scalars, since we may need to specify scalars with a multiplicity greater than one (this is often the case in nanostructures). Morover, by Prop.~\ref{prop:udgp2dgp}, no reconstructed graph $G_\alpha$ yields a feasible DGP instance. 
  \hfill $\blacksquare$ 
\end{example}

\begin{example}
  Consider the UDGP with input $(K=2,n=3,L=(\delta_1,\delta_2,\delta_3))$, which results in a triangle in the plane, with side lengths $\delta_1,\delta_2,\delta_3$ and an underlying graph
  \[T_2=(\{1,2,3\},\{\{1,2\},\{2,3\},\{1,3\}\}),\]
which is the complete graph on $3$ vertices. The realization does not change if the point labels change, so all of the possible assignments are valid:
  \begin{itemize}
  \setlength{\parskip}{-0.12em}
  \item $\alpha^1: \delta_1\to \{1,2\}, \delta_2\to\{2,3\}, \delta_3\to\{1,3\}$
  \item $\alpha^2: \delta_1\to \{1,2\}, \delta_2\to\{1,3\}, \delta_3\to\{2,3\}$ 
  \item $\alpha^3: \delta_1\to \{1,3\}, \delta_2\to\{2,3\}, \delta_3\to\{1,2\}$
  \item $\alpha^4: \delta_1\to \{1,3\}, \delta_2\to\{1,2\}, \delta_3\to\{2,3\}$
  \item $\alpha^5: \delta_1\to \{2,3\}, \delta_2\to\{1,2\}, \delta_3\to\{1,3\}$
  \item $\alpha^6: \delta_1\to \{2,3\}, \delta_2\to\{1,3\}, \delta_3\to\{1,2\}$,
  \end{itemize}
  even though all are realized by the same triangle, but with permuted vertex labels. In general, if an assignment $\alpha$ is computed for a UDGP instance, leading to a graph $G_{\alpha}$, all isomorphic versions of $G_{\alpha}$ are admissible reconstructions. If all possible distances are given for the number $n$ of points (as e.g.~in this example $m=|L|=3=n(n-1)/2$), all $n!$ vertex permutations lead to feasible assignments, i.e.~to isomorphic copies of the $3$-clique graph. \hfill $\blacksquare$ 
\end{example}

An inverse mapping from DGP to UDGP instances is constructed as follows: let $\mathscr{D}=(K,G=(V,E,d))$ be a DGP instance. Let $<$ be any total order on $E$ inducing the edge list $(e_1,\ldots,e_m)$. Then we can define $\delta_\ell=d_{e_\ell}$ for all $\ell\le m$. This yields a UDGP instance $\mathscr{U}_<=(K,|V|,\delta)$ which is YES if $\mathscr{D}$ is YES. By contrast, $\mathscr{U}_<$ may be YES even though $\mathscr{D}$ is NO, as shown in Example \ref{eg:DnoUyes}.

\begin{example}
\label{eg:DnoUyes}
Consider the graph $G$ given by the following weight function $d_{12}=3, d_{23}=4, d_{13}=5, d_{14}=2, d_{24}=2$, shown in Fig.~\ref{fig:eg3} (top left) with the correct realization:
\begin{figure}[!ht]
  \begin{center}
    \begin{minipage}{0.4\textwidth}
      \tikz [x=3cm, y=3cm, circle, scale=0.6]
      \graph[no placement, nodes={draw,circle}] {
        1[x=0,y=0] --["5"] 3[x=-1.33,y=1] ;
        1 --["3"] 2[x=0,y=1] --["2"] 4[x=0.4,y=0.5] --["2"] 1 ;
        2 --["4"] 3 
      };
      \\
      \tikz [x=3cm, y=3cm, circle, scale=0.6]
      \graph[no placement, nodes={draw,circle}] {
        1[x=0,y=0] --["5"] 3[x=-1.33,y=1] ;
        1 --["3"] 2[x=0,y=1] ;
        2 --["4"] 3 ;
        1 --["2"] 4[x=-1.33,y=0] --["2"] 3 
      };
    \end{minipage}
    \hfill
    \begin{minipage}{0.59\textwidth}
      \includegraphics[width=1.0\textwidth]{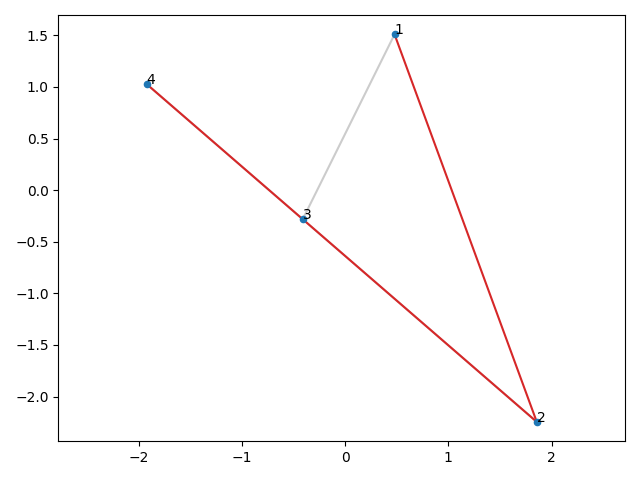}
    \end{minipage}
  \end{center}
  \caption{The same UDGP instance gives rise to (at least) three DGP instances (two YES, one NO).}
  \label{fig:eg3}
\end{figure}
Evidently, the DGP with $K=2$ on $G$ is YES. Now consider instead the graph $H$ defined by $d_{12}=3, d_{23}=4, d_{13}=5, d_{14}=2, d_{34}=2$, where the distance value $2$ previously assigned to $\{2,4\}$ is now assigned to $\{1,4\}$, shown in Fig.~\ref{fig:eg3} (bottom left) with an incorrect realization. It is obvious that the DGP instance defined on $H$ is NO, since there is no triangular realization for the subgraph $H[1,3,4]$ defined on distance values $5,2,2$: they do not satisfy triangular inequalities. Nonetheless, both DGPs are obtained from the same UDGP instance $\mathscr{U}=(2,4,(2,2,3,4,5))$ by means of different assignments:
\begin{eqnarray*}
  \alpha_G &:&1\to\{1,4\},2\to\{2,4\},3\to\{1,2\},4\to\{2,3\},5\to\{1,3\} \\
  \alpha_H &:&1\to\{1,4\},2\to\{3,4\},3\to\{1,2\},4\to\{2,3\},5\to\{1,3\}.
\end{eqnarray*}
Another possible solution of $\mathscr{U}$, found by a solver deployed on an MP formulation, is the assignment $\alpha':1\to\{1,3\},2\to\{3,4\},3\to\{2,3\},4\to\{1,2\},5\to\{2,4\}$ leading to the realization in Fig.~\ref{fig:eg3} (right), where the triangle on $2,3,4$ is flat. 
  \hfill $\blacksquare$
\end{example}

\section{A mathematical programming primer}
\label{s:mp}
All of the methods we shall discuss in this survey are formulation-based. We are going to formulate the DGP and UDGP in many different ways by means of Mathematical Programming (MP), a declarative formal language for describing and solving optimization problems \cite{williams}. The general form of an MP formulation is:
\begin{equation}
  \min_x \{ f(x) \;|\; \forall i\le p\;g_i(x)\le 0\land x\in X\},
\end{equation}
where:
\begin{itemize}
\item $x$ is an array of \textit{decision variables};
\item the functions $f(x)$ and $g_i(x)$ for each $i\le p$ are represented by a mathematical expression based on a formal grammar with the usual arithmetic operators, elementary functions, and brackets;
\item $f(x)$ is the \textit{objective function} to be minimized;
\item $\forall i\le p\;g_i(x)\le 0$ are \textit{explicit constraints};
\item $X$ is a set of \textit{implicit constraints} that may be hard or inconvenient to represent, but for which there exist convenient computational methods.
\end{itemize}
We warn that sometimes, to save space, MP formulations present some explicit constraints implicitly, when the explicit form has already been discussed previously.

Once a problem is represented by a MP formulation, one may look at the type of implicit constraints (e.g.~non-negative orthant, integer lattice), variables (e.g.~continuous, integer, binary, mixed, matrix) and terms (e.g.~linear, quadratic, polynomial, general nonlinear) involved in objective and constraints, and choose an off-the-shelf piece of software called \textit{solver} that caters for the formulation properties. When the solver is deployed on the formulation, given sufficient time and assuming the formulation conforms to the theoretical assumptions of the solver, it will provide one or more solutions to the problem, or report an error. Implicit constraints are handled by specific parts of the algorithm implemented by the chosen solver.

The input of an MP formulation are the mathematical expressions $f,g_i$ (for $i\le p$) and possibly the choice of solver given by $X$. The output is given by the values of the decision variables after the solution process, or the type of error returned. 

In general, MP formulations may be feasible or infeasible, bounded or unbounded. An appropriate solver may be able to prove feasibility/infeasibility, boundednes/unboundedness, and also provide a solution as an output (values of the decision variables after the solver terminates). 

Solvers are implemented algorithms for solving a certain subclass of MP formulations. The different existing solvers yield a cover of MP by subclasses, which provides a taxonomy for MP. For the purposes of this survey, the taxonomy we make use of is: Linear Programming (LP), Semidefinite Programming (SDP), Mixed-Integer LP (MILP), Quadratically Constrained Quadratic Programming (QCQP), convex QCQP (cQCQP), Nonlinear Programming (NLP), convex NLP (cNLP), Mixed-Integer QCQP (MIQCQP), Mixed-Integer NLP (MINLP).

Solvers may be local or global: a local solver requires a starting point (initial variable values) and reaches a close-by local optimum. A global solver gives some kind of guarantee of global optimality. Usually, local solvers for purely continuous problems are significantly faster than their global counterparts. If integer variables are involved, however, most problems become \textbf{NP}-hard, and local solvers may be as slow as global ones (in fact, with integer variables, a ``local solver'' often consists in running a global solver with a finite amount of time). 

In the taxonomy above, LP is the only MP subclass in \textbf{P}, and the only subclass for which feasibility and boundedness (or their converse) can be proved by the solver with a certificate that can be verified by a Turing Machine (TM). In particular, LP SDP and cQCQP are all convex MPs, and a local solver identifies a local optimum which, by convexity, is also global. The cQCQP and SDP subclasses, therefore have similar characteristics to the LP subclass, but within an $\epsilon>0$ precision limit. MILP, QCQP, MIQCQP, NLP, MINLP are all \textbf{NP}-hard; moreover, MIQCQP and MINLP are undecidable \cite{undecminlp}.

The situation with cNLP is more complicated: many cNLPs are ``tame'', in the sense that a local NLP solver will usually identify a local optimum which, by convexity, is also global (the ``usually'' caters for the possibility of the cNLP not conforming to constraint qualification conditions which are rarely verifiable by a TM, and hence one can just hope nothing bad will happen). The cNLP class, however, also includes copositive reformulations of the Motzkin-Straus formulation \cite{motzkinstraus} of \textsc{max clique}, a famously \textbf{NP}-hard problem, showing that cNLP is also \textbf{NP}-hard by inclusion \cite{burer2}. In practice, no local NLP solver can deal with the implicit copositivity constraint. 

An important feature of MP formulations is that they can be \textit{reformulated}, i.e.~symbolically changed, so that some aspect of the formulation remains invariant. The most common invariants are (a) the set of global optima, (b) at least one global optimum, (c) the globally optimal value, (d) a guarantee that the reformulation will yield a bound in the optimization direction of the original formulation, (e) a general approximation guarantee \cite{refmathprog}. The reformulation used most frequently is \textit{linearization} \cite{arschapter}, which consists in replacing a mathematical expression $f(x)$ with a new variable $\phi$, then adjoin the \textit{defining constraint} $\phi=f(x)$ to the formulation. The resulting reformulation is \textit{exact}, i.e.~all global optima of the original formulation are mapped to global optima of the second. Linearizations are usually employed as a starting point for further reformulations, usually of the \textit{relaxation} type, i.e.~a reformulation yielding a guaranteed bound in the optimization direction. 

\section{MP formulations for the DGP and the UDGP}
\label{s:form}
All DGP formulations are either equivalent to, or derived from, Eq.~\eqref{dgp}, and similarly for UDGP formulations. By turning the decision problem Eq.~\eqref{dgp} into an optimization problem, most of our formulations (with the exception of \textsf{push-and-pull} and variants, see Sect.~\ref{s:pushpull}) relax feasibility into optimality: specifically, the DGP instance is YES if and only if the globally optimal objective function value is zero. Moreover, if the instance is NO because of slightly imprecise distance data, any realization will yield an optimal objective function value that is ``reasonably'' small. This is why, in this survey, we chose to dispense from representing experimental measurement errors explicitly by intervals: one may more simply cater to these errors by accepting realizations with objective values that are slightly larger than zero, but within a given $\epsilon>0$. 

With most DGP formulations, we can derive a corresponding UDGP formulation by means of the mapping $\mathrm{DGP}\to\mathrm{UDGP}$ given by $\mathscr{D}\to\mathscr{U}_<$ mentioned in Sect.~\ref{s:theory} just before Example \ref{eg:DnoUyes}. This usually involves changing a few indices and sets in the formulation, and adding the component relative to the assignment $\alpha$.

All congruences (translations, rotations, reflections) applied to a valid realization produce another valid realization: thus, all MP formulations of YES instances have in fact an uncountable number of global optima. We found that it sometimes help solvers to fix at least the translations (fixing rotations and reflections at the formulation level is harder). This can be achieved with a set of \textit{centroid constraints}:
\begin{equation}
  \forall k\le K \quad \sum\limits_{u\in V} x_u = 0.
  \label{centroid}
\end{equation}
For UDGP formulations, the quantification of the sum is on $i\le n$. 

\subsection{The \textsf{quartic} formulation}
Minimizing the sum of squared differences of the two sides of a system of equations is the most common way to solve such a system. This probably comes from the fact that, for a linear system, there exists a closed formula (linear regression); moreover, for a linear system, the corresponding optimization problem is a ``tame'' cNLP. The application of this technique to nonlinear systems, by contrast, gives rise to nonconvex NLPs in general, which is an \textbf{NP}-hard class. When applied to the DGP system \eqref{dgp}, this is to be expected in view of the fact that the DGP itself is \textbf{NP}-hard, but it weakens the justification for choosing the minimization of the sum of squared differences to reformulate Eq.~\eqref{dgp}. This formulation can be traced back to \cite{takane_77} (and perhaps even earlier). It was tested computationally as a DGP formulation starting from \cite{lln1} and in many more papers after it.
\begin{equation}
  \min\limits_{x\in\mathbb{R}^K} \sum\limits_{\{u,v\}\in E} \big(\|x_u-x_v\|_2^2-d_{uv}^2\big)^2.
  \label{quartic}
\end{equation}
The \textsf{quartic} formulation \eqref{quartic} is unconstrained. It minimizes a nonconvex multivariate polynomial of degree 4  (hence the name \textit{quartic}). It can be solved globally with a global NLP solver, or locally with a local NLP solver starting from a given imprecise realization $x'$ of the weighted graph defined by $\{d_{uv}\;|\;\{u,v\}\in E\}$. 

\subsubsection{The unassigned \textsf{quartic}}
\label{s:unass}
We represent the unknown assignment $\alpha:[m]\to[n]\times [n]$ by means of binary variables $y_{ij\ell}$ such that $y_{ij\ell}=1$ iff $\alpha(\ell)=(i,j)$. The assignment properties are that (i) it is a function:
\begin{equation}
  \forall \ell\le m\quad \sum\limits_{i<j\le n} y_{ij\ell}\le 1,
  \label{ass1}
\end{equation}
and (ii) it is injective:
\begin{equation}
  \forall i<j\le n\quad \sum\limits_{\ell\le m} y_{ij\ell}= 1.
  \label{ass2}
\end{equation}
Since these assignment constraints will be repeated often in the following, we summarize them in the set $\mathcal{A}=\{y\in\{0,1\}^{n^2m} \;|\; \mbox{Eq.~\eqref{ass1}-\eqref{ass2}}\}$, and then use the implicit constraint $y\in\mathcal{A}$ to refer to the (explicit) constraints Eq.~\eqref{ass1}-\eqref{ass2} together with the implicit constraint $y\in\{0,1\}^{n^2m}$.

We now modify Eq.~\eqref{quartic} so that each term in the objective is added to the sum only if the corresponding $y$ variable is set to $1$:
\begin{equation}
  \min\limits_{x\in\mathbb{R}^{nK}\atop y\in\mathcal{A}} \sum\limits_{\ell\le m\atop i<j\le n} y_{ij\ell} \big(\|x_i-x_j\|_2^2-\delta_{\ell}^2\big)^2.
  \label{uquartic}
\end{equation}
The \textsf{quartic} UDGP formulation Eq.~\eqref{uquartic} is a constrained MINLP involving polynomial functions of degree 5, which first appeared in \cite{udgp}. It can be solved using a global MINLP solver. Because of the presence of the binary variables $y$, local solutions may be achieved by giving a resource constraint (CPU time, iterations, number of nodes) to the global solver. Using global solvers with resource constraints is referred to as ``using a global solver locally''.

There is an interesting, and unexpected continuous exact reformulation of Eq.~\eqref{uquartic}, yielding a nonconvex NLP which can be solved locally or globally using a local or global NLP solver. This is based on the observation that, under certain conditions, maximizing a sum of squared variables defined over $[0,1]$ yields a binary vector.
\begin{equation}
  \left.\begin{array}{rrcl}
    \min\limits_{x\in\mathbb{R}^{nK}\atop y\in\mathcal{A},t\in\mathbb{R}} & t-\sum\limits_{\ell\le m\atop i<j\le m} y^2_{ij\ell} && \\
    & \sum\limits_{\ell\le m\atop i<j\le n} y_{ij\ell} \big(\|x_i-x_j\|_2^2-\delta_{\ell}^2\big)^2 &=& t.
  \end{array}\right\}
  \label{uquartic_cont}
\end{equation}
The proof that this reformulation is indeed exact is given in \cite[Thm.~2]{udgp}. This formulation is enticing insofar as it would allow one to solve it locally using local NLP solvers, which take a starting point and improve it (which is usually much faster than using a global solver locally on a mixed-integer formulation). However, the theorem that guarantees that the continuous $y$ variables have binary values only applies at global optima. We also note that, at global optima, the objective function value is not zero but $-m$. Thus, it might be harder to claim that NO instances with only slighly imprecise distances are recognizable by a slight variation on the objective function: in Eq.~\eqref{uquartic_cont} the ``error'' given by the objective value represents distance errors as well as assignment errors (since a non-binary $y$ value cannot be interpreted as an assignment).

\subsection{The \textsf{system} formulation}
\label{s:system}
The \textsf{system} formulation is almost equivalent to the \textsf{quartic}: it simply linearizes the differences under the square:
\begin{equation}
  \left.\begin{array}{rrcl}
    \min\limits_{x\in\mathbb{R}^{nK}\atop s\in\mathbb{R}^m} & \sum\limits_{\{u,v\}\in E} s^2_{uv} && \\
    \forall \{u,v\}\in E & \|x_u-x_v\|_2^2 &=& d_{uv}^2 + s_{uv}.
  \end{array}\right\} 
  \label{system}
\end{equation}
The \textsf{system} formulation is a nonconvex QCQP which can be solved locally or globally by local or global QCQP or NLP solvers.

A variant of the \textsf{system} formulation is based on the $\ell_1$-norm to penalize the error, instead of the $\ell_2$-norm:
\begin{equation}
  \left.\begin{array}{rrcl}
    \min\limits_{x\in\mathbb{R}^{nK}\atop s\in\mathbb{R}^{2m}} & \sum\limits_{\{u,v\}\in E} (s^+_{uv} &+& s^-_{uv}) \\
    \forall \{u,v\}\in E & \|x_u-x_v\|_2^2 &=& d_{uv}^2 + s^+_{uv} - s^-_{uv}.
  \end{array}\right\} 
  \label{system_ell1}
\end{equation}
This is actually a Quadratically Constrained Program (QCP) since the objective is linear. The QCP class is a subsclass of QCQP (since all linear forms are also trivial quadratic forms). As regards DGP and UDGP formulations, there is no substantial practical difference between the two classes, as there are no specific QCP solvers that cannot solve QCQPs too. 

\subsubsection{The unassigned \textsf{system} formulation}
\label{s:ass_syst}
We refer to Sect.~\ref{s:unass} for the definition of the assignment constraint set $\mathcal{A}$ and the binary variables $y$. The unassigned version of Eq.~\eqref{system} is
\begin{equation}
  \left.\begin{array}{r@{\hspace*{0.4em}}rcl}
    \min\limits_{x\in\mathbb{R}^{nK}\atop s\in\mathbb{R}^m,y\in\mathcal{A}} &  \sum\limits_{\ell\le m\atop i<j\le n} s_{\ell}^2 & & \\
    \forall \ell\le m, i<j\le n & -s_{uv}\!-\!M(1\!-\!y_{ij\ell}) \le &\|x_i\!-\!x_j\|_2^2\!-\!\delta_\ell^2 &\le s_{uv}\!+\!M(1\!-\!y_{ij\ell}),
  \end{array}\right\}
  \label{usystem}
\end{equation}
where $M$ is a ``big-M'' constant that must be an upper bound to the diameter of any realization satisfying the DGP. A very slack bound $M=(\sum_{\ell\le m}\delta_\ell)^2$ is provided in \cite[Prop.~2.2]{cordone}. 

The unassigned version of the $\ell_1$-norm variant in Eq.~\eqref{system_ell1} is:
\begin{equation}
  \left.\begin{array}{r@{\hspace*{0.4em}}rcl}
    \min\limits_{x\in\mathbb{R}^{nK}\atop s\in\mathbb{R}^{2m},y\in\mathcal{A}} &  \sum\limits_{\ell\le m\atop i<j\le n} (s_{\ell}^+ + s_\ell^-) && \\
    \forall \ell\le m, i<j\le n & -s^-_{uv}\!-\!M(1\!-\!y_{ij\ell}) \le &\|x_i\!-\!x_j\|_2^2\!-\!\delta_\ell^2 &\le s_{uv}^+\!+\!M(1\!-\!y_{ij\ell}).
  \end{array}\right\}
  \label{usystem_ell1}
\end{equation}
In Eq.~\eqref{usystem}-\eqref{usystem_ell1} the $y$ variables are used to activate or deactivate the constraints according to whether the distance $\delta_\ell$ is assigned to edge $\{u,v\}$ or not. We note that the term on the objective need not be multiplied by $y$ when $y=0$ since this is taken care of by the optimization direction.

The advantage of unassigned \textsf{system} formulations w.r.t~other UDGP formulations is that they are MIQCQPs instead of MINLPs, which allows one to deploy a larger set of solvers upon them. In fact, Eq.~\eqref{usystem_ell1} is a Mixed-Integer QCP (MIQCP). 

\subsection{The \textsf{push-and-pull} formulation}
\label{s:pushpull}
The \textsf{push-and-pull} formulation of the DGP \cite{zoo,mwu} is as follows:
\begin{equation}
  \left.\begin{array}{rrcl}
    \max\limits_{x\in\mathbb{R}^{nK}} & \sum\limits_{\{u,v\}\in E} \|x_u-x_v\|_2^2 && \\
    \forall \{u,v\}\in E & \|x_u-x_v\|_2^2 &\le& d_{uv}^2.
  \end{array}\right\}
  \label{pushpull}
\end{equation}
It is a nonconvex QCQP with a concave objective (maximization of a convex function) and convex quadratic constraints. It can be solved locally or globally by local or global QCQP or NLP solvers. Its name is given by the suggestion that the constraints push the realization points together, while the objective pulls them apart.

There is also a \textsf{pull-and-push} formulation that inverts the objective direction and the constraint senses:
\begin{equation}
  \left.\begin{array}{rrcl}
    \min\limits_{x\in\mathbb{R}^{nK}} & \sum\limits_{\{u,v\}\in E} \|x_u-x_v\|_2^2 && \\
    \forall \{u,v\}\in E & \|x_u-x_v\|_2^2 &\ge& d_{uv}^2,
  \end{array}\right\}
  \label{pullpush}
\end{equation}
which is not quite as useful as Eq.~\eqref{pushpull} since it is usually (practically) easier, for many local NLP solvers, to decrease a difficult objective than to satisfy difficult constraints. Nonetheless we shall see later that even Eq.~\eqref{pullpush} has a use. 

It is not immediately obvious that Eq.~\eqref{pushpull} is an exact reformulation of Eq.~\eqref{dgp}. A proof of this fact is given in \cite[Prop.~2.8]{mwu} (the proof for Eq.~\eqref{pullpush} is analogous).

\subsubsection{The unassigned \textsf{push-and-pull}}
We refer to Sect.~\ref{s:unass} for the definition of the assignment constraint set $\mathcal{A}$ and the binary variables $y$. The unassigned version of Eq.~\eqref{pushpull} is
\begin{equation}
  \left.\begin{array}{rrcl}
    \max\limits_{x\in\mathbb{R}^{nK}\atop y\in\mathcal{A}} & \sum\limits_{\ell\le m\atop i<j\le n} y_{ij\ell}\|x_i-x_j\|_2^2 && \\
    \forall \ell\le m, i<j\le n & \|x_i-x_j\|_2^2 &\le& \delta_\ell^2 + M(1-y_{ij\ell}).
  \end{array}\right\}
  \label{upushpull}
\end{equation}
As in the unassigned \textsf{system} formulations, a value of $M$ is given in \cite[Prop.~2.2]{cordone}. In Eq.~\eqref{upushpull} the $y$ variables only count assigned indices in the objective, and deactivate constraints for non-assigned indices.

Eq.~\eqref{upushpull} is not quadratic, since the objective is a cubic polynomial. It is therefore a MINLP, which can be solved with a global MINLP solver, possibly used locally. 

\subsection{The \textsf{cycle} formulation}
The \textsf{cycle} formulation for the DGP is presented in \cite{dgp-ojmo}. In its native form, it decomposes the DGP into two phases: a constrained optimization problem and the solution of a linear system. The optimization problem constraint is quantified over a basis $\mathcal{B}$ \cite{fcbmmor} of the cycle space of the input graph $G$:
\begin{equation}
  \left.\begin{array}{rrcl}
  \min\limits_{z\in[-\mathbf{d},\mathbf{d}]^{mK}} & \sum\limits_{\{u,v\}\in E} (\|z_{uv}\|_2^2 &-& d_{uv}^2)^2 \\
  \forall C\in\mathcal{B}(G) & \sum\limits_{\{u,v\}\in C} z_{uv} &=& 0,
  \end{array}\right\}
  \label{cycle1}
\end{equation}
where $\mathbf{d}_{uv}=(d_{uv},\ldots,d_{uv})\in\mathbb{R}^K$ and $z_{uv}=(z_{uv1},\ldots,z_{uvK})$ for each $\{u,v\}\in E$. The linear system is:
\begin{equation}
  \forall \{u,v\}\in E, k\le K\quad x_{uk}-x_{vk}=z_{uvk}.
  \label{cycle2}
\end{equation}
Once Eq.~\eqref{cycle1} is solved, Eq.~\eqref{cycle2} is a linear system of $mK$ equations and $nK$ unknowns, which can be solved (if $z$ is an optimum of Eq.~\eqref{cycle1}) to retrieve the realization $x\in\mathbb{R}^{nK}$.

The proof in \cite{dgp-ojmo} leading to the correctness of this decomposition is long but elementary. However, the fact that the problem is decomposed in a hard part (Eq.~\eqref{cycle1}) and an easy part (Eq.~\eqref{cycle2}) does not improve computational performances by much. We therefore propose the following exact reformulation, which integrates Eq.~\eqref{cycle2} into Eq.~\eqref{cycle1}: this considerably shortens the correctness proof.
\begin{equation}
  \left.\begin{array}{rrclr}
  \min\limits_{x\in\mathbb{R}^{nK}\atop z\in[-\mathbf{d},\mathbf{d}]^{mK}} & \sum\limits_{\{u,v\}\in E} (\|z_{uv}\|_2^2&-&d_{uv}^2)^2  & \\
  \forall C\in\mathcal{B}(G) & \sum\limits_{\{u,v\}\in C} z_{uv} &=& 0 & (\dag)\\
  \forall \{u,v\}\in E & x_u-x_v &=& z_{uv}. & (\ddag)
  \end{array}\right\}
  \label{cycle}
\end{equation}

\begin{proposition}
  Eq.~\eqref{cycle} is an exact reformulation of Eq.~\eqref{dgp}.
  \label{prop:cycle}
\end{proposition}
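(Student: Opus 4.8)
The plan is to show that Eq.~\eqref{cycle} and Eq.~\eqref{dgp} are \emph{exact} reformulations of one another in the sense of the paper, i.e.\ that there is a correspondence between their feasible/optimal solutions preserving the property ``objective value is zero'' (equivalently, YES instances map to YES instances, and the realizations themselves correspond). Since Eq.~\eqref{dgp} is a feasibility problem while Eq.~\eqref{cycle} is an optimization problem, the precise claim I would establish is: $\mathscr{D}=(K,G)$ is YES if and only if Eq.~\eqref{cycle} has a global optimum with objective value $0$, and moreover the $x$-part of any such optimum is a valid realization for Eq.~\eqref{dgp}. The whole argument hinges on one linear-algebraic fact about the cycle space: a vector $z\in\mathbb{R}^{mK}$ indexed by edges is in the image of the ``edge-difference'' map $x\mapsto(x_u-x_v)_{\{u,v\}\in E}$ if and only if $z$ satisfies the cycle constraints $(\dag)$. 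This is exactly what makes constraint $(\dag)$ redundant once $(\ddag)$ is present, and conversely what guarantees that feasibility of $(\dag)$ can always be ``lifted'' to an $x$.

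First I would fix an orientation of each edge so that $z_{uv}$ and the differences $x_u-x_v$ are unambiguous, and note that $(\dag)$ is applied coordinatewise over a cycle basis $\mathcal{B}(G)$. The forward direction is easy: given a realization $x$ satisfying Eq.~\eqref{dgp}, set $z_{uv}:=x_u-x_v$ for every edge. Then $(\ddag)$ holds by construction, $(\dag)$ holds because the signed sum of edge-differences around any cycle telescopes to zero, and each $z_{uv}$ lies in $[-\mathbf{d},\mathbf{d}]$ because $\|z_{uv}\|_2^2=\|x_u-x_v\|_2^2=d_{uv}^2$ forces each coordinate into $[-d_{uv},d_{uv}]$. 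Finally the objective term $(\|z_{uv}\|_2^2-d_{uv}^2)^2$ vanishes edgewise, so the objective is $0$; since the objective is a sum of squares it is bounded below by $0$, hence this point is a global optimum of value $0$.

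For the converse I would take a global optimum $(x,z)$ of Eq.~\eqref{cycle} of value $0$ (which exists exactly when $\mathscr{D}$ is YES, and I would close that equivalence using the forward direction just proved). Value $0$ forces $\|z_{uv}\|_2^2=d_{uv}^2$ for every edge, and constraint $(\ddag)$ forces $x_u-x_v=z_{uv}$; substituting gives $\|x_u-x_v\|_2^2=d_{uv}^2$ for all $\{u,v\}\in E$, which is precisely Eq.~\eqref{dgp}. The key point here is that $(\ddag)$ alone guarantees the existence of a consistent $x$ realizing the prescribed differences $z$, and this is where the cycle-space lemma does its real work: I would observe that constraint $(\dag)$ is \emph{implied} by $(\ddag)$, so adjoining it changes neither the feasible region's $z$-projection nor the optimum, which is why integrating Eq.~\eqref{cycle2} into Eq.~\eqref{cycle1} is legitimate and shortens the proof. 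I expect the main obstacle to be stated cleanly rather than deep: one must argue the interplay between the two constraints carefully, namely that $(\dag)$ becomes redundant in the presence of $(\ddag)$ (so the reformulation does not shrink the solution set) while $(\dag)$ in the original decomposition Eq.~\eqref{cycle1} was exactly what certified solvability of the linear system Eq.~\eqref{cycle2} (so nothing is lost by the substitution). Handling the $K$ coordinates uniformly, and confirming that the box constraint $z\in[-\mathbf{d},\mathbf{d}]^{mK}$ is consistent with (indeed forced by) the zero-objective condition, are the remaining routine checks; none of these should require the long computation of the original two-phase correctness proof cited from \cite{dgp-ojmo}.
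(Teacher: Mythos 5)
Your proof is correct and follows essentially the same route as the paper's: the forward direction sets $z_{uv}=x_u-x_v$, verifies the cycle constraints ($\dag$) by telescoping, and notes the objective vanishes, while the converse uses the linearization constraints ($\ddag$) to substitute $z_{uv}=x_u-x_v$ and recover a solution of Eq.~\eqref{dgp}. If anything, your version is slightly more careful than the paper's, which omits the routine check that $z\in[-\mathbf{d},\mathbf{d}]^{mK}$ holds at the constructed point and phrases the converse as a correspondence with global optima of Eq.~\eqref{quartic} rather than arguing directly from value-zero optima.
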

\begin{proof}
  Let $x$ by any solution of Eq.~\eqref{dgp}. By the \textit{linearization constraints} ($\ddag$) we know that the objective function value of Eq.~\eqref{cycle} is zero. Moreover, for any $k\le K$ and any cycle $C=\{1,\ldots,c\}$ (wlog) in the graph we have:
  \begin{eqnarray*}
     (x_{1k}-x_{2k})+(x_{2k}-x_{3k})+\cdots+(x_{ck}-x_{1k}) &=&\\
    = x_{1k}-(x_{2k}-x_{2k})-(x_{3k}-x_{3k})+\cdots-x_{1k} &=& 0.
  \end{eqnarray*}
  Since every cycle can be generated linearly from the cycle basis $\mathcal{B}(G)$, the \textit{cycle constraints} ($\dag$) are also satisfied. Conversely, let $x'$ be a global optimum of Eq.~\eqref{cycle}. By replacing $z_{uv}$ with $x_u-x_v$ in the objective (which we can do since $x'$ satisfies the linearization constraints ($\ddag$) in Eq.~\eqref{cycle}), we see that $x'$ is also a global optimum of Eq.~\eqref{quartic}, which is therefore a solution of Eq.~\eqref{dgp}. 
\end{proof}

In fact, the proof of Prop.~\ref{prop:cycle} also holds without the cycle constraints, which implies that the following formulation is also correct:
\begin{equation}
  \left.\begin{array}{rrcl}
  \min\limits_{x\in\mathbb{R}^{nK}\atop z\in[-\mathbf{d},\mathbf{d}]^{mK}} & \sum\limits_{\{u,v\}\in E} (\|z_{uv}\|_2^2&-&d_{uv}^2)^2  \\
  \forall \{u,v\}\in E & x_u-x_v &=& z_{uv}.
  \end{array}\right\}
  \label{cyclesimple}
\end{equation}
We also note that Eq.~\eqref{cyclesimple} is a trivial reformulation of Eq.~\eqref{quartic} by linearization of the terms $x_u-x_v$ by the variables $z_{uv}$. The point of the \textsf{cycle} formulation is that the cycle constraints ($\dag$) tighten any relaxation of Eq.~\eqref{cyclesimple}, leading to better performances with global NLP solvers based on spatial Branch-and-Bound (sBB) \cite{couenne}. The performance of local NLP solvers deployed on Eq.~\eqref{cyclesimple} is impacted less clearly by the cycle constraints: for this reason, Eq.~\eqref{cyclesimple} deserves more in-depth study w.r.t.~local NLP optimization.

We note that Eq.~\eqref{cycle1}, \eqref{cycle}, and \eqref{cyclesimple} are all NLPs involving polynomials of degree 4. They can easily be reformulated based on the \textsf{system} (resp.~\textsf{push-and-pull}) formulation, yielding nonconvex QCQPs with nonconvex (resp.~convex) quadratic constraints. We write these reformulations for \eqref{cyclesimple}: the cycle constraints ($\dag$) may be added to Eq.~\eqref{cycsystem}-\eqref{cycsystem_ell1} to yield QCQP reformulations analogous to \eqref{cycle}:
\begin{center}
  \begin{minipage}{0.52\textwidth}
    \begin{equation}
      \left.\begin{array}{rrcl}
        \min\limits_{x\in\mathbb{R}^{nK}\!, s\in\mathbb{R}^{m}\atop z\in[-\mathbf{d},\mathbf{d}]^{mK}} & \sum\limits_{\{u,v\}\in E}\!\!s^2_{uv} && \\
        \forall \{u,v\}\in E & \|z_{uv}\|_2^2 &=& d_{uv}^2\!+\!s_{uv} \\
        \forall \{u,v\}\in E & x_u\!-\!x_v &=& z_{uv} 
      \end{array}\right\}
      \label{cycsystem}
    \end{equation}
  \end{minipage}
  \hfill
  \begin{minipage}{0.46\textwidth}
    \begin{equation}
      \left.\begin{array}{rrcl}
        \max\limits_{x\in\mathbb{R}^{nK}\atop z\in[-\mathbf{d},\mathbf{d}]^{mK}} & \sum\limits_{\{u,v\}\in E}\!\!\|&z_{uv}&\|_2^2  \\
        \forall \{u,v\}\in E & \|z_{uv}\|_2^2 &\le& d_{uv}^2 \\
        \forall \{u,v\}\in E & x_u\!-\!x_v &=& z_{uv}.
      \end{array}\right\}
      \label{cycpushpull}
    \end{equation}
  \end{minipage}
\end{center}
Eq.~\eqref{cycsystem}-\eqref{cycpushpull} correspond to the $\ell_2$-norm error. Another exact reformulation of Eq.~\eqref{cycsystem} can be written for the $\ell_1$-norm, similarly to Eq.~\eqref{system_ell1}: 
\begin{equation}
  \left.\begin{array}{rrcl}
    \min\limits_{x\in\mathbb{R}^{nK}\!,\, s\in\mathbb{R}^{2m}\atop z\in[-\mathbf{d},\mathbf{d}]^{mK}} & \sum\limits_{\{u,v\}\in E} (s^+_{uv} &+&s^-_{uv}) \\
    \forall \{u,v\}\in E & \|z_{uv}\|_2^2 &=& d_{uv}^2 + s^+_{uv} - s^-_{uv} \\
    \forall \{u,v\}\in E & x_u-x_v &=& z_{uv}.
  \end{array}\right\}
  \label{cycsystem_ell1}
\end{equation}

\subsubsection{The unassigned \textsf{cycle} formulation}
The \textsf{cycle} formulations that include the cycle constraints ($\dag$) involve the knowledge of the underlying graph, which is not available as part of the input in the UDGP. Therefore, we cannot derive unassigned formulations from Eq.~\eqref{cycle1} and \eqref{cycle}. We can derive the unassigned versions of the simplified \textsf{cycle} formulations without cycle constraints ($\dag$), namely Eq.~\eqref{cyclesimple}-\eqref{cycsystem_ell1}. We limit our treatment to the one derived from Eq.~\eqref{cycsystem_ell1} because it yields a MIQCQP instead of a general MINLP, as explained in Sect.~\ref{s:ass_syst}.
\begin{equation}
  \left.\begin{array}{r@{\quad}rcl}
    \min\limits_{x\in\mathbb{R}^{nK}\atop s\in\mathbb{R}^{2m},y\in\mathcal{A}} &  \sum\limits_{\ell\le m\atop i<j\le n} (s_{\ell}^+ + s_\ell^-) && \\
    \forall \ell\le m, i<j\le n & -s^-_{uv}-M(1\!-\!y_{ij\ell}) &\le& \|z_{ij}\|_2^2-\delta_\ell^2 \le s_{uv}^++M(1\!-\!y_{ij\ell}) \\
    \forall i<j\le n & x_i - x_j &=& z_{ij}.
  \end{array}\right\}
  \label{ucycsystem_ell1}
\end{equation}
Eq.~\eqref{ucycsystem_ell1} is a MIQCQP that can be solved using a global MIQCQP solver (possibly used locally).

\section{Matrix relaxations and approximations}
\label{s:matform}
In Sect.~\ref{s:form} we presented eleven MP formulations for the DGP and six for the UDGP. Among the DGP formulations, seven are (MI)QC(Q)P. Among the UDGP ones, three are (MI)QC(Q)P. The interest of limiting the polynomial degree to $2$ is that one can directly derive SDP relaxations of the original formulation \cite{biswasphd,ye,krislocksiam}. From these, one can then derive further linear relaxations and approximations by means of Diagonally Dominant Programming (DDP) \cite{isco16}, i.e.~linear programming over the primal and dual cones of diagonally dominant (DD) matrices, which is in fact a subclass of LP --- for which there exist extremely fast solvers. The application of SDP/DDP to the UDGP yields mixed-integer versions of the SDP/DDP formulations of the DGP. Since DDP $\subset$ LP, we obtain Mixed-Integer SDP (MISDP) relaxations, and MILP relaxations and approximations for the UDGP. 

We shall limit the application of DDP reformulations to those formulations of Sect.~\ref{s:form} where the quadratic terms only involve the realization variables $x$. These are: the $\ell_1$-norm \textsf{system} formulations Eq.~\eqref{system_ell1} and \eqref{usystem_ell1}, and the \textsf{push-and-pull}/\textsf{pull-and-push} formulations Eq.~\eqref{pushpull}-\eqref{pullpush}.

The basic reformulation steps to obtain (MI)SDP relaxations and (MI)LP relaxations and approximations is the same, as it applies term-wise to the expression $\|x_u-x_v\|_2^2$ and to the implicit constraints.

\subsection{Constructing the SDP relaxation}
\label{s:sdp}
Consider any DGP formulation in Sect.~\ref{s:form}, and in particular the term $\|x_u-x_v\|_2^2$ for any $\{u,v\}\in E$. We have:
\begin{eqnarray}
  \|x_u-x_v\|_2^2 &=& \|x_u\|_2^2 + \|x_v\|_2^2 - 2\langle x_u,x_v\rangle \nonumber \\
  &=& \langle x_u,x_u\rangle + \langle x_v,x_v\rangle - 2\langle x_u,x_v\rangle \nonumber \\
  &=& X_{uu} + X_{vv} - 2X_{uv}\label{sdplin}
\end{eqnarray}
by linearization of any term $\langle x_t,x_w$ (for $t,w\in V$) with the additional variable $X_{tw}$. There may be up to $n^2$ linearization variables organized in a symmetric $n\times n$ matrix $X$. We can now replace the nonlinear term $\|x_u-x_v\|_2^2$ by the linear term $X_{uu} + X_{vv} - 2X_{uv}$ in any DGP formulation, then add the defining constraint matrix
\begin{equation}
  X=x\transpose{x}. \label{Xxx}
\end{equation}
For any of the above DGP formulations of Sect.~\ref{s:form}, the symbolic procedure just described provides an exact reformulation. We note that such reformulations are not very convenient to solve, as satisfying Eq.~\eqref{Xxx} is a difficult task for most solvers.

Now we rewrite Eq.~\eqref{Xxx} as $X-x\transpose{x}=0$, then we relax this to $X-x\transpose{x}\succeq 0$, which, using the Schur complement, reads:
\begin{equation}
  \left(\begin{array}{cc}
    \mathbf{1}_K & \transpose{x} \\
    x & X
  \end{array}\right)\succeq 0. \label{Xxxsdp}
\end{equation}
If every occurrence of $x$ was eliminated by the linearization process, we can simplify Eq.~\eqref{Xxxsdp} to
\begin{equation}
  X\succeq 0. \label{Xsdp}
\end{equation}
SDP formulations are tractable cNLPs the nonlinearity of which is exclusively in the implicit constraint $X\succeq 0$. SDPs can be solved using an SDP solver, which runs in polynomial time up to any desired $\epsilon>0$ precision. The issue is practical, though: there are currently no SDP solvers to address SDPs with millions of variables and constraints, which is the current situation for LP solvers. 

\subsection{The DDP restriction}
\label{s:ddp}
The practical inadequacy of current SDP solvers motivates the search for inner and outer polyhedral cones to approximate the SDP cone. In this section we look at the cone of DD matrices. By Gershgorin's theorem \cite{gershgorin}, every DD matrix is also positive semidefinite (PSD), while the converse does not hold. Thus, if we replace the PSD constraint $X\succeq 0$ with ``$X$ is a DD matrix'' we obtain an inner approximation of an SDP formulation. There remain two questions: (i) how can we describe the DD cone explicitly, and (ii) how does the solution of a DDP help us solve the DGP?

The first question is easiest: an $n\times n$ symmetric matrix $X$ is DD if it satisfies:
\begin{equation}
  \forall i\le n \quad \sum\limits_{j\le n\atop j\not=i} |X_{ij}|\le X_{ii}. \label{dd}
\end{equation}
Eq.~\eqref{dd} is a piecewise-linear (hence nonlinear) constraint. But a linear description exists for it \cite[Thm.~3.9]{ahmadimajumdar}, based on linearizing the nonlinear term $|X_{ij}|$ by the components of a new matrix variable $T$:
\begin{eqnarray}
  \forall i\le n\quad \sum\limits_{j\le n\atop j\not=i} T_{ij}&\le& X_{ii} \label{dd1}\\
  -T\le X &\le& T. \label{dd2}
\end{eqnarray}
We let $\DD=\{X\in\mathbf{S}_n;|\;\exists T\;\mbox{Eq.~}\eqref{dd1}-\eqref{dd2}\}$, where $\mathbf{S}_n$ is the set of all $n\times n$ symmetric matrices, be the linear description of the DD cone. The DDP corresponding to a given SDP can then be derived by replacing $X\succeq 0$ with $X\in\DD$.

The second question depends on the output. Since $\DD\subsetneq \mathbf{S}_n^+=\{X\in\mathbf{S}_n\;|\;X\succeq 0\}$, there may be feasible SDPs where the corresponding DDP is infeasible: we can sometimes help this by relaxing some of the explicit constraints. However, if the DDP is feasible, we obtain a PSD solution matrix $X'$, which provides an interesting solution, since it can be factored (we shall see the significance of this below). On the other hand, although the DDP is a restriction of the corresponding SDP, we start from an SDP that is a relaxation of the original DGP formulation: we cannot directly infer any relationship between the original objective function value and the optimal objective value of the DDP.

DDP formulations are LPs, which can be solved with any LP solver.

\subsection{The dual DDP relaxation}
\label{s:dualddp}
If $C$ is a cone, its dual cone $C^\ast$ is defined as $\{\psi\;|\;\forall \phi\in C\;\langle \phi,\psi\rangle\ge 0\}$. It turns out that the dual DD cone $\dDD$ of $\DD$ is finitely generated by the matrices:
\[ E_{ij}^{\pm}=(e_i\pm e_j)\transpose{(e_i\pm e_j)} \]
for every $i,j\le n$, where $e_1,\ldots,e_n$ is the standard basis of $\mathbb{R}^n$ \cite{barker2}. In other words, we have
\begin{equation}
  \dDD = \{X\in\mathbf{S}_n\;|\;\forall i,j\le n\;\trace{X E_{ij}^\pm}\ge 0\}. \label{dualDD1}
\end{equation}
Equivalently, since each $E_{ij}^{\pm}$ is defined as the gram matrix of trivial linear combinations of basis vectors, we consider $\Delta=\{e_i\;|\;i\le n\}\cup \{e_i\pm e_j\;|\;i<j\le n\}$, and describe $\dDD$ as follows:
\begin{equation}
  \dDD = \{ X\in\mathbf{S}_n \;|\;\forall v\in\Delta\quad \transpose{v} Xv\ge 0 \}.
  \label{dualDD2}
\end{equation}

By Eq.~\eqref{dualDD1}, we can easily compute an explicit form for the constraints $\trace{X E_{ij}^\pm}\ge 0$. We know that $E_{ii}=\diag{e_i}$ for all $i\le n$, and that:
\begin{itemize}
\item $E_{ij}^+$ has the single nonzero minor $\left(\begin{array}{cc} 1_{ii}&1_{ij}\\1_{ij}&1_{jj}\end{array}\right)$;
\item $E_{ij}^-$ has the single nonzero minor $\left(\begin{array}{cc} 1_{ii}&-1_{ij}\\-1_{ij}&1_{jj}\end{array}\right)$.  
\end{itemize}
By inspection we have:
\begin{eqnarray*}
  \forall i\le n \quad \trace{X E_{ii}} &=& X_{ii} \label{dD1} \\
  \forall i,j\le n \quad \trace{X E_{ij}^+} &=& X_{ii}+X_{jj}+2X_{ij} \label{dD2} \\
  \forall i,j\le n \quad \trace{X E_{ij}^-} &=& X_{ii}+X_{jj}-2X_{ij}. \label{dD3}
\end{eqnarray*}
Therefore, we can define $\dDD$ by explicit constraints as follows:
\begin{equation}
  \dDD = \{ X\in\mathbf{S}_n \;|\; \diag{X}\ge 0\;\land\;\forall i<j\le n\;\;X_{ii}+X_{jj}\pm 2X_{ij}\ge 0\}. \label{dualDD3}
\end{equation}

The representation of $\dDD$ in Eq.~\eqref{dualDD2} helps us prove that the dual DDP cone is an outer approximation of the PSD cone: by Eq.~\eqref{dualDD2} we have that $X\in\dDD$ if $\transpose{v}Xv\ge 0$ for $v\in\Delta\subset\mathbb{R}^n$. Since PSD matrices are all and those for which $\transpose{v}Xv$ for all $v\in\mathbb{R}^n$, the inclusion $\mathbf{S}_n^+\subsetneq\dDD$ follows.

Dual DDP formulations belong to the LP class: they can therefore be solved by any LP solver.

\subsection{Matrix reformulations of the DGP and UDGP}
\label{s:matrixform}
Based on the sets $\mathbf{S}_n^+$, $\DD$, and $\dDD$, we can define SDP relaxations, DDP restrictions, and dual DDP relaxations for the QCQP and MIQCQP formulations in Sect.~\ref{s:form} where the quadratic terms only involve the $x$ variables.

\subsubsection{From the \textsf{system} formulation}
From the DGP \textsf{system} formulation in Eq.~\eqref{system_ell1}, for all $\mathbf{X}\in\{\mathbf{S}^+_n,\DD,\dDD\}$ we derive the following DGP reformulations:
\begin{equation}
  \left.\begin{array}{rrcl}
    \min\limits_{X\in\mathbf{X}\atop s\in\mathbb{R}^{2m}} & \sum\limits_{\{u,v\}\in E} (s^+_{uv} &+& s^-_{uv}) \\
    \forall \{u,v\}\in E & X_{uu}+X_{vv}-2X_{uv} &=& d_{uv}^2 + s^+_{uv} - s^-_{uv}.
  \end{array}\right\} 
  \label{Xsystem_ell1}
\end{equation}
We remark that Eq.~\eqref{Xsystem_ell1} (as well as all the formulations in this section) describes \textit{three} different formulations depending on the symbol $\mathbf{X}$ that ranges in the PSD cone $\mathbf{S}_n^+$, the DD cone $\DD$, the dual DD cone $\dDD$: the first is an SDP relaxation, the second an inner LP approximation of the SDP, and the third an outer LP relaxation of the SDP. Thus, the formulations in Sect.~\ref{s:matrixform} are actually \textit{meta-formulations}: they become formulations only after fixing the meaning of the symbol $\mathbf{X}$.

From the UDGP \textsf{system} formulation in Eq.~\eqref{usystem_ell1}, for all $\mathbf{X}\in\{\mathbf{S}^+_n,\DD,\dDD\}$ we derive the following UDGP reformulations:
\begin{equation}
  \left.\begin{array}{r@{\hspace*{0.4em}}rcl}
    \min\limits_{X\in\mathbf{X},y\in\mathcal{A}\atop s\in\mathbb{R}^{2m}} &  \sum\limits_{\ell\le m\atop i<j\le n} (s_{\ell}^+ + s_\ell^-) && \\
    \forall {\small \left\{\begin{array}{l}\ell\le m\\ i<j\le n\end{array}\right.} & -s^-_{uv}\!-\!M(1\!-\!y_{ij\ell}) \le &X_{ii}\!+\!X_{jj}\!-\!2X_{ij}\!-\!\delta_\ell^2 &\le s_{uv}^+\!+\!M(1\!-\!y_{ij\ell}).
  \end{array}\right\}
  \label{Xusystem_ell1}
\end{equation}
We recall that $\mathcal{A}$ describes the binary variables $y$ and corresponding assignment constraints (see Sect.~\ref{s:unass}). The three formulations described in Eq.~\eqref{Xusystem_ell1} are a MISDP (for $\mathbf{X}=\mathbf{S}_n^+$) and two MILPs (otherwise). 

\subsubsection{From \textsf{push-and-pull} formulations}
From the DGP \textsf{push-and-pull} formulation in Eq.~\eqref{pushpull}, for all $\mathbf{X}\in\{\mathbf{S}^+_n,\DD,\dDD\}$ we derive the following DGP reformulations:
\begin{equation}
  \left.\begin{array}{rrcl}
    \max\limits_{X\in\mathbf{X}} & \sum\limits_{\{u,v\}\in E} (X_{uu}+X_{vv}-2X_{uv}) && \\
    \forall \{u,v\}\in E & X_{uu}+X_{vv}-2X_{uv} &\le& d_{uv}^2.
  \end{array}\right\}
  \label{Xpushpull}
\end{equation}
For the related \textsf{pull-and-push} formulation in Eq.~\eqref{pullpush} we derive:
\begin{equation}
  \left.\begin{array}{rrcl}
    \min\limits_{X\in\mathbf{X}} & \sum\limits_{\{u,v\}\in E} (X_{uu}+X_{vv}-2X_{uv}) && \\
    \forall \{u,v\}\in E & X_{uu}+X_{vv}-2X_{uv} &\ge& d_{uv}^2.
  \end{array}\right\}
  \label{Xpullpush}
\end{equation}
Eq.~\eqref{Xpullpush} with $\mathbf{X}=\DD$ is only motivation for the original formulation Eq.~\eqref{pullpush} (the constraints of which are concave, and therefore hard to satisfy): because of the potential feasibility issues of solving DDPs (i.e.~the DDP might be infeasible even if the original SDP is feasible), Eq.~\eqref{Xpushpull}-\eqref{Xpullpush} enlarge the feasible region both $\le,\ge$ constraint senses: at least one of them must be feasible.

When $\mathbf{X}=\mathbf{S}_n$, we also propose a related formulation mentioned by Yinyu Ye in one of his course slides:
\begin{equation}
  \left.\begin{array}{rrcl}
    \max\limits_{X\succeq 0} & \trace{X} && \\
    \forall \{u,v\}\in E & X_{uu}+X_{vv}-2X_{uv} &=& d_{uv}^2.
  \end{array}\right\}
  \label{yecourse}
\end{equation}
The constraints of Eq.~\eqref{yecourse} are an SDP relaxation of Eq.~\eqref{dgp}. If the original DGP instance is NO because it has no realization in $\mathbb{R}^K$, then Eq.~\eqref{yecourse} is feasible. On the other hand, if the original DGP instance is NO because the given distances cannot be realized in any dimension, then Eq.~\eqref{yecourse} is infeasible. Obviously, if the DGP instance is YES then Eq.~\eqref{yecourse} is also feasible.

For feasible cases of Eq.~\eqref{yecourse}, using equations instead of inequalities (as in Eq.~\eqref{Xpushpull}-\eqref{Xpullpush}) produces a tighter relaxation. The objective function heuristically attempts to reduce the rank of the solution, as
\[\trace{X}=\trace{P\Lambda \transpose{P}}=\trace{P\transpose{P}\Lambda}=\trace{\Lambda}=\sum_{u\in V}\lambda_u,\]
where $P\Lambda\transpose{P}$ is an eigendecomposition of $X$, and minimizing the sum of eigenvalues should help decrease the rank of $X$ (we shall see why this is convenient in Sect.~\ref{s:rank}).

We do not derive UDGP versions from any of the formulations in this section, since it would yield a product of variables ($y$ and $X$) in the objective function, resulting in nonlinear programs with cone constraints (which are impractical to solve). 

\subsection{DGP post-processing}
\label{s:rank}
In this section we assume that the problem being solved is a DGP. From matrix formulations we do not obtain a realization $x'\in\mathbb{R}^{nK}$ as output, but a symmetric matrix $X'\in\mathbb{R}^{n\times n}$. If $X'$ is PSD, as would happen for SDP and DDP, then $X'$ is a Gram matrix of a realization, so it can be written as $X'=\xi\transpose{\xi}$, where $\xi\in\mathbb{R}^{n\times n}$: in general, $\xi$ can be interpreted as a realization in $\mathbb{R}^n$ instead of in $\mathbb{R}^K$. If $X$ is the solution of a dual DDP, then $\xi\in\mathbb{C}^n$ in general. In both cases, in order to find the ``closest'' realization in $K$ dimensions, we must reduce the rank of the realization points (in $\mathbb{R}^n$ or $\mathbb{C}^n$ to obtain a realization matrix in $x'\in\mathbb{R}^{nK}$.

In our past work we have considered two dimensionality reduction methodologies: Principal Component Analysis (PCA) \cite{hotelling,wikipedia_pca} and Barvinok's naive algorithm extended to $K$ dimensions \cite{barvinok2,barvinok_orl}. PCA can be applied to the rows of $\xi$ (the point vectors) to reduce them to the $K$ principal components (or possibly even fewer if $X'$, as the solution of a dual DDP, fails to be a PSD matrix). Barvinok's naive algorithm can only be applied to solutions of SDPs and DDPs (with dual DDPs one may simply hope for the best). PCA produces the $K$-dimensional realization $x'$ closest to the solution of the SDP or DDP. Barvinok's naive algorithm produces, with arbitrarily high probability, a $K$-dimensional realization $x'$ that is ``reasonably close to'' (or just ``not too far from'') a realization of the original DGP. Comparative computational experiments between these two rank reduction methods can be found in \cite{barvinok_orl}.

Once a matrix $x'\in\mathbb{R}^{nK}$ has been computed, it can be \textit{refined}, i.e.~its realization error can be reduced, by using $x'$ as a starting point on any one of the DGP formulations of NLP/QCQP type in Sect.~\ref{s:form} solved by a local NLP/QCQP solver. This yields an approximate realization $x^\ast$ of the DGP.

\subsubsection{A solution process for DGPs}
\label{s:process}
In summary, we propose the following process for solving DGP instances too difficult or too large to be dealt with by global NLP/QCQP solvers:
\begin{enumerate}
\item Solve an SDP/DDP/dual DDP reformulation of the DGP instance (this should be reasonably fast), obtain an $n\times n$ symmetric matrix solution $X$, and factor it as $\xi\transpose{\xi}$.
\item Reduce the rank of $\xi$ to an $n\times K$ realization matrix $x'$ using various dimensionality reduction methods.
\item Improve the quality of the realization $x'$ by using it as a starting point in a local NLP solver, which will yield a good-quality realization $x^\ast$ of the given DGP instance.
\end{enumerate}

\subsection{UDGP post-processing}
\label{s:udgp:post}
Solving UDGPs poses the problem of graph reconstruction, as discussed in Sect.~\ref{s:theory}. Solving UDGPs globally is only possible for tiny instances, in general. Thus, the MISDP/MILP matrix reformulations of the UDGP in Eq.~\eqref{Xusystem_ell1} are the only practically viable possibility to handle medium to large-sized UDGP instances. High-quality global MILP solvers can be configured to find all (or many) solutions during the search. In general, one will find solutions $(X',y')$ where $X'$ is the matrix solution and $y'$ encodes the assignment $\alpha$.

The first post-processing task to carry out is the reconstruction of the DGP graph $G_\alpha$ (see Sect.~\ref{s:theory}). The second post-processing task is to work out a realization $x\in\mathbb{R}^{nK}$ of the reconstructed graph. There are two possibilities: either one considers the matrix solution $X'$, or one discards it. Paired with the graph $G_\alpha$, the solution $X'$ can be used as described in Sect.~\ref{s:rank}, i.e.~rank reduction followed by refinement. If $X'$ is discarded, the graph $G_\alpha$ defines a DGP, which can be solved using the process given in Sect.~\ref{s:process}. The second possibility was adopted in \cite{udgp_buckminster}, since the first gave poor quality realizations. 

\subsection{Two remarks over concave constraints}
In the context of the (assigned) DGP and matrix formulation, we ignore pairs $u,v$ of vertices that are not edges in the graph. A popular way to treat them is to add a ``greater than or equal'' constraints with respect to some upper distance threshold $\bar{d}$, whenever one is known, i.e.
\begin{equation}
\forall \{u,v\}\not\in E \quad \|x_u-x_v\|_2^2 \ge \bar{d}
\label{gecon}
\end{equation}
in the case of vector formulations, or
\begin{equation}
X_{uu}+X_{vv}-2X_{uv} \ge \bar{d}^2
\label{gelin}
\end{equation}
in the case of matrix formulations. This might help avoid the typical ``overclustering'' effect of realization points around the origin for weakly connected vertices. We do not consider these constraints for two reasons: the first, and foremost, is that it assumes that \textit{all} distances shorter than $\bar{d}$ have been measured, which may not be the case for proteins (and other molecules) --- and even a single wrong constraint of this type is likely to change the resulting structure considerably. The second reason is that Eq.~\eqref{gecon} is a concave constraint, which is hard to enforce for many solvers (the linearized version in Eq.~\eqref{gelin}, by contrast, is simply a linear constraint). 

It is well known that strict constraints cannot appear in MP formulations, since they yield open sets: and optima over open sets may not exist. Typically, with an upper bound $\bar{d}$ on distance values, mathematicians will want to impose
\begin{equation*}
\forall \{u,v\}\in E\quad \|x_u-x_v\|_2^2 \le \bar{d}
\end{equation*}
and 
\begin{equation*}
\forall \{u,v\}\not\in E\quad \|x_u-x_v\|_2^2 > \bar{d},
\label{strict}
\end{equation*}
which might make optima of any MP formulation including Eq.~\eqref{strict} in its contraints non-existent. One possible reformulation of Eq.~\eqref{strict} is to trade openness for unboundedness. We introduce an auxiliary variable $t\ge 0$ in the formulation, and rewrite Eq.~\eqref{strict} as:
\begin{equation}
\forall \{u,v\}\not\in E\quad \|x_u-x_v\|_2^2 \ge \bar{d} + e^{-t}.
\label{strictref}
\end{equation}
Unboundedness in $t$ is only possible if $\|x_u-x_v\|_2^2$ is forced to be exactly equal to $\bar{d}$, which can only happen if the bound $\bar{d}$ is too small. While Eq.~\eqref{strictref} is nonconvex in general, its linearized version
\begin{equation}
\forall \{u,v\}\not\in E\quad X_{uu}+X_{vv}-2X_{uv} \ge \bar{d} + e^{-t}
\label{strictreflin}
\end{equation}
is a convex constraint, which is representable using an exponential cone. If the rest of the formulation is also a conic program (such as e.g.~SDP or DDP formulations), the whole problem can be solved efficiently using a conic programming solver.

\section{Computational evaluation}
\label{s:results}
In this section we attempt to answer the following questions.
\begin{enumerate}
\item Up to what size can we solve DGP/UDGP instances to global optimality in an acceptable time on a laptop?
\item For DGP/UDGP instances of various types, is it better to run a simple stochastic matheuristics (see below) around an exact formulation solved locally, or use the matrix formulation based solution process?
\item Is MP practically useful for solving realistic DGP/UDGP instances?
\end{enumerate}

We shall answer the first question by finding size thresholds beyond which the exponential nature of global optimization algorithms on our MP formulations becomes limiting. 

The second question deserves an explanation. A \textit{matheuristic} is a heuristic algorithm (i.e.~that does not provide exactness guarantees) based on a MP formulation. A \textit{stochastic} algorithm uses an element of randomness during its execution. The simplest stochastic matheuristic is MultiStart (MS) \cite{schoen2002,sobolopt}, shown in Alg.~\ref{a:ms}.
\begin{algorithm}[!ht]
\begin{algorithmic}
  \STATE initialize $\bar{x}$ to \texttt{NaN}
  \WHILE{resource limit not reached}
  \STATE sample random starting point $x'$
  \STATE solve a MP formulation locally from $x'$, obtain $\bar{x}$
  \IF{$\bar{x}$ improves on previous optimum}
  \STATE update $x^\ast\leftarrow \bar{x}$
  \ENDIF
  \STATE return $x^\ast$
  \ENDWHILE
\end{algorithmic}
\caption{MultiStart}
\label{a:ms}
\end{algorithm}
We shall answer the second question by means of a comparison between MS and the process described in Sect.~\ref{s:process} (for DGP) and \ref{s:udgp:post} (for UDGP).

We propose to answer the third question by attempting to reconstruct protein shapes from distance data (both with and without the graph). 

Finally, we note that centroid constraints (Eq.~\eqref{centroid}) have been added to all of our formulations. Although the results in \cite{barvinok_orl} report a computational advantage slightly in favour of Barvinok's naive algorithm, we have chosen to use PCA as a dimensional reduction method on matrix formulation solutions, simply because it is better known. 

\subsection{Instances}
\label{s:inst}
DGP instances are organized in three families.
\begin{itemize}
\item A set $\mathcal{R}$ of 28 random biconnected graphs of given sparsity and size, based on a vertex set of points in the Euclidean plane $\mathbb{R}^2$, with edges generated by means of an Erd\H{o}s-Renyi process over a starting Hamiltonian cycle and weighted by Euclidean distance between the corresponding points, to be realized in $K=2$ (all of these instances are YES by construction). The set $\mathcal{R}$ is used to answer the first question, i.e.~to what instance size can we solve DGPs and UDGPs to guaranteed optimality?
\item A collection $\mathcal{G}$ of 309 graphs of different types and sizes, some weighted some not (i.e., with unit weight), to be realized in $K=2$ (most of the randomly weighted versions of these instances are generically NO; some of the others are YES). The set $\mathcal{G}$ is used to answer the second question, i.e.~is it better to solve DGP/UDGP formulations locally withing a MS algorithm (Alg.~\ref{a:ms}), or use matrix formulation based solution process (Sect.~\ref{s:matrixform})?
\item A set $\mathcal{P}$ of protein graphs simulating NOESY experiments with known covalent bonds and angles: in other words a set of disk graphs (of radius $5.5${\AA}) on a vertex set of points in $\mathbb{R}^3$ (all of these instances are YES by definition). We use this set to answer the third question: are MP formulations actually useful in practice in regard to the DGP/UDGP?
\end{itemize}

\subsubsection{Random euclidean graphs}
\label{s:inst:eucl2d}
More precisely, the set $\mathcal{R}$ consists of 28 random biconnected graph generated with the Erd\H{o}s-Renyi model for each vertex set size $n\in\{5,8,10,12,15, 18, 20\}$, and for each sparsity parameter $p\in\{0.4,0.8, 0.9, 0.95\}$.

\subsubsection{Different graph types}
\label{s:inst:gph}
The set $\mathcal{G}$ is composed as follows: 
\begin{enumerate}
   \ifspringer\else\setlength{\parskip}{-0.2em}\fi
\item 18 almost $k$-regular graphs on $n$ vertices (9 randomly weighted and 9 unweighted),
\item 18 random graphs on $n$ vertices with edge generation probability $p$ (9 randomly weighted and 9 unweighted);
\item 18 bipartite graphs on $n+n$ vertices with edge generation probability $p$ (9 randomly weighted and 9 unweighted),
\item 18 tripartite graphs on $n+n+n$ vertices with edge generation probability $p$ (9 weighted and 9 unweighted),
\item 10 square meshes with $n^2$ vertices (5 weighted and 5 unweighted),
\item 10 torus meshes with $n^2$ vertices as a folded-up square mesh (5 weighted and 5 unweighted),
\item 10 triangular meshes with $n$ vertices per side (5 weighted and 5 unweighted),
\item 126 clustered graphs with $k$ clusters on $n$ vertices with intra-cluster edge generation probability $p$ and inter-cluster edge generation probability $q$ (63 weighted and 63 unweighted),
\item 6 power law graphs on $n$ vertices where the degree of vertex $i$ is $\lceil n\alpha i^{-\tau}\rceil$ with $\alpha\in(0,1),\tau>0$ (3 weighted and 3 unweighted),
\item 18 chain of $k$-cliques on $n$ vertices (9 weighted and 9 unweighted),
\item 10 chain of triangles on $n$ vertices (5 weighted and 5 unweighted),
\item 18 DMDGP \cite{dgp-sirev} on $n$ vertices with $k$ contiguous adjacent predecessors (9 weighted and 9 unweighted),
\item 5 Beeker-Glusa graphs \cite{dgpinnp}: chains of triangles with specific edge costs (weighted only),
\item 6 local graphs on $n$ vertices with edge threshold $t$: vertices are $n$ points in the plane, edges exist if Euclidean distance between two points shorter than $t$ (weighted only),
\item 18 norm graphs on $n$ vertices chosen as points in the plane with edge generation probability $p$, edges are weighted by $\ell_1$ and $\ell_\infty$ distances between points (weighted only);
\end{enumerate}

\subsubsection{Protein instances}
\label{s:inst:prot}
The set $\mathcal{P}$ is composed of protein graphs constructed from the Protein Data Bank (PDB) \cite{pdb} in such a way as to roughly mimick the output of a NOESY experiment on an NMR machine. We selected a set of proteins (and pieces thereof) that cover a reasonable spectrum size, from small to reasonably large (see Table \ref{t:protsz}). For each of these we extracted the first available realization in the PDB and computed all of the inter-atomic distances, then we discarded those with length larger than $5.5${\AA}. The foremost difference between these instances and those actually obtained from NOESY experiments followed by distance assignment processes is that there are no mis-assigned edges in the protein graphs in $\mathcal{P}$.
\begin{table}[!ht]
  \begin{center}
    \begin{tabular}{l|rr}
      Name & $|V|$ & $|E|$ \\ \hline
      \textsf{tiny} & 38 & 335 \\
      \textsf{1guu-1} & 150 & 959 \\
      \textsf{1guu-4000} & 150 & 968 \\
      \textsf{C0030pkl} & 198 & 3247 \\
      \textsf{1PPT} & 303 & 3102 \\
      \textsf{1guu} & 427 & 955 \\ 
      \textsf{100d} & 491 & 5741 \\
      \textsf{3al1} & 681 & 17417 \\
      \textsf{1hpv} & 1633 & 18512 \\
      \textsf{il2} & 2098 & 45251 \\
      \textsf{1tii} & 5691 & 69800 
    \end{tabular}
  \end{center}
  \caption{Vertex and edge set sizes of protein instances in the class $\mathcal{P}$.}
  \label{t:protsz}
\end{table}
We note that these graphs are sometimes disconnected. For example, \textsf{tiny} consists of a connected component of 37 atoms and a single disconnected atom (an isolated vertex in the graph); and \textsf{1guu} has 277 isolated vertices (in fact it actually has only 150 connected atoms, like its kin instances \textsf{1guu-1} and \textsf{1guu-4000}). We chose to keep such occurrences because graphs occurring from applications are often atypical with respect to the usual assumptions of connectedness: and benchmarks on these instances are supposed to verify the practical usefulness of our formulations. 

\subsubsection{UDGP versions of $\mathcal{R}$, $\mathcal{G}$, $\mathcal{P}$}
\label{s:inst:udgp}
All of our UDGP instances were derived from DGP ones by discarding the graph structure: we only keep $K,n$, and the list $L$ of distance values from the graph edges. 

\subsection{Hardware and software}
\label{s:inst:hwsw}
All tests have been carried out on an Intel architecture server with: 2 Intel Xeon Platinum 8362 CPUs at 2.80GHz, each with 32 cores with hyperthreading, for a total of 128 cores; and 2TB RAM. The software system consists of a set of of Python 3 \cite{python3} scripts, bash scripts, and AMPL \cite{ampl} code.

This system calls LP, MILP, SDP, (local and global) NLP, MISDP, and MINLP solvers as follows: CPLEX 22.1.1 \cite{cplex221} for LP, Gurobi 10.0.1 \cite{gurobi} for MILP, nonconvex NLP, and MINLP,
IPOPT 3.4.11 \cite{ipopt} for solving NLP locally, SCS 3.2.3 \cite{scs} for SDP, Pajarito \cite{pajarito} for MISDP (called from a Julia \cite{julia} script, and using Gurobi and Mosek \cite{mosek10} as subsolver). The MILP/MINLP solver is deployed with a CPU time limit of 1800s. The LP, SDP, and MISDP solvers are used without specific configurations. The local NLP solver is used within a MS algorithm limited to 5 iterations unless specified otherwise.

We remark that there is an option for providing IPOPT with a termination based on CPU time limit, but the verification for this type of termination is only carried out in a certain outer phase of the algorithm. This means that IPOPT can (and often does) exceed the given CPU time limit by arbitrary amounts. We therefore decided to refrain from imposing a time limit on IPOPT in our computational experiments. This explains the fact that our reported CPU times may dramatically exceed the default CPU time limit. 

\subsection{DGP tests and results}
\label{s:dgpres}
\label{s:res:forms}

We have tested the following formulations:
\begin{enumerate}
   \ifspringer\else\setlength{\parskip}{-0.2em}\fi
\item \textsf{cycle} (Eq.~\eqref{cycle}),
\item \textsf{cyclesimple} (Eq.~\eqref{cyclesimple}),
\item \textsf{cycpushpull} (Eq.~\eqref{cycpushpull} with cycle constraints ($\dag$)),
\item \textsf{cycsimplepushpull} (Eq.~\eqref{cycpushpull}),
\item \textsf{cycsimplesys1} (Eq.~\eqref{cycsystem_ell1}),
\item \textsf{cycsimplesys2} (Eq.~\eqref{cycsystem}),
\item \textsf{cycsys1} (Eq.~\eqref{cycsystem_ell1} with ($\dag$)),
\item \textsf{cycsys2} (Eq.~\eqref{cycsystem} with ($\dag$)),
\item \textsf{pushpull} (Eq.~\eqref{pushpull}),
\item \textsf{pullpush} (Eq.~\eqref{pullpush}),
\item \textsf{quartic} (Eq.~\eqref{quartic}),
\item \textsf{system1} (Eq.~\eqref{system_ell1}),
\item \textsf{system2} (Eq.~\eqref{system}),
\end{enumerate}
both by themselves, and used as refinement steps to:
\begin{enumerate}
\item an SDP matrix formulation, i.e.~a variant of Eq.~\eqref{yecourse} with a modified objective
  \[\sum_{\{u,v\}\in E}(X_{uu}+X_{vv}-2X_{uv}) + 0.1\,\trace{X},\]
which was heuristically found to perform slightly better on protein instances\footnote{With the constraints of Eq.~\eqref{yecourse}, the term $\sum_{\{u,v\}\in E}(X_{uu}+X_{vv}-2X_{uv})$ in the objective is actually equal to the constant $\sum_{uv}d_{uv}^2$, and so it should be irrelevant. But not every SDP or local NLP solver always ensures feasibility at every step: this depends on the reformulations and algorithms it implements. We think that this fact might give this formulation the slight empirical advantage we observed in previously conducted experiments.}: we note that Eq.~\eqref{yecourse} is infeasible on NO instances of the DGP where the graph cannot be realized in any dimension;
\item the corresponding DDP and dualDDP polyhedral approximations that replace the PSD cone $\mathbf{S}_n^+$ with the DDP and dualDDP cones $\mathbf{D}_n,\mathbf{D}^\ast_n$; we also relaxed the equations $X_{uu}+X_{vv}-2X_{uv}=d_{uv}^2$ to $\ge$-inequalities in the DDP to prevent an excessive number of infeasibilities.
\end{enumerate}

We present our computational results grouped in various ways:
\begin{itemize}
\item by approximate vertex set size (graphs grouped by $|V|$ closest to multiples of 10);
\item by approximate edge set size (graphs grouped by $|E|$ closest to multiples of 50);
\item by approximate edge density $\frac{|E|}{V(V-1)/2}$ (graphs grouped by density values closest to multiples of $1/10$);
\item by graph type (only for the class $\mathcal{G}$), where randomly weighted graph types have their name prefixed by `W');
\item by formulation type.
\end{itemize}

The accuracy of our DGP results was described by \textit{mean distance error} (\textsf{mde}), \textit{largest distance error} (\textsf{lde}), and the algorithmic performance by seconds of CPU time. For a realization $x\in\mathbb{R}^{nK}$ of a DGP instance $(K,G)$ where $G=(V,E,d)$, we have:
\begin{eqnarray}
  \mathsf{mde}(x) &=& \sum\limits_{\{u,v\}\in E} \big|\,\|x_u-x_v\|_2^2-d_{uv}^2\,\big| \label{mde}\\
  \mathsf{lde}(x) &=& \max\limits_{\{u,v\}\in E} \big|\,\|x_u-x_v\|_2^2-d_{uv}^2\,\big|.\label{lde}
\end{eqnarray}

\subsubsection{The Euclidean graph collection $\mathcal{R}$}
\label{s:dgp:euclgph}
The point of this graph collection (Sect.~\ref{s:inst:eucl2d}) is to provide a testbed of small graphs (all of which are YES instances of the DGP) for answering our first question concerning the DGP: how far can we go up in size and yet obtain a DGP realization with an algorithmic guarantee that the realization is precise, at least up to an $\epsilon>0$ tolerance and in a reasonable amount of time? We consider an optimality tolerance of $10^{-6}$, and a ``resonable amount of time'' to mean 1800s of CPU time. We solve these instances with the Gurobi solver.

We only consider the exact formulations \textsf{cycle}, \textsf{cyclesimple}, \textsf{cycpushpull}, \textsf{cycsimplepushpull}, \textsf{cycsimplesys1}, \textsf{cycsimplesys2}, \textsf{cycsys1}, \textsf{cycsys2}, \textsf{pullpush}, \textsf{pushpull}, \textsf{quartic}, \textsf{system1}, \textsf{system2} (see Sect.~\ref{s:res:forms}). We do not consider any of the matrix formulations (SDP, DDP, dual DDP) because none of them is exact. 

We present average results grouped by approximate vertex cardinality, and its corresponding bar plot figure, in Table \ref{t:Rvtx}. The results are shown terms of \textsf{mde}, \textsf{lde}, CPU time.
\begin{table}[!ht]
  \begin{center}
    \begin{minipage}{0.48\textwidth}
      \begin{tabular}{l|rrr}
        \textbf{$\approx |V|$} & \textsf{mde} & \textsf{lde} & CPU \\ \hline
        10 & 0.0001 & 0.0003 & 192.63 \\
        20 & 0.3884 & 1.8223 & 1356.60
      \end{tabular}
    \end{minipage}
    \begin{minipage}{0.5\textwidth}
      \includegraphics[width=\textwidth,height=0.13\textheight]{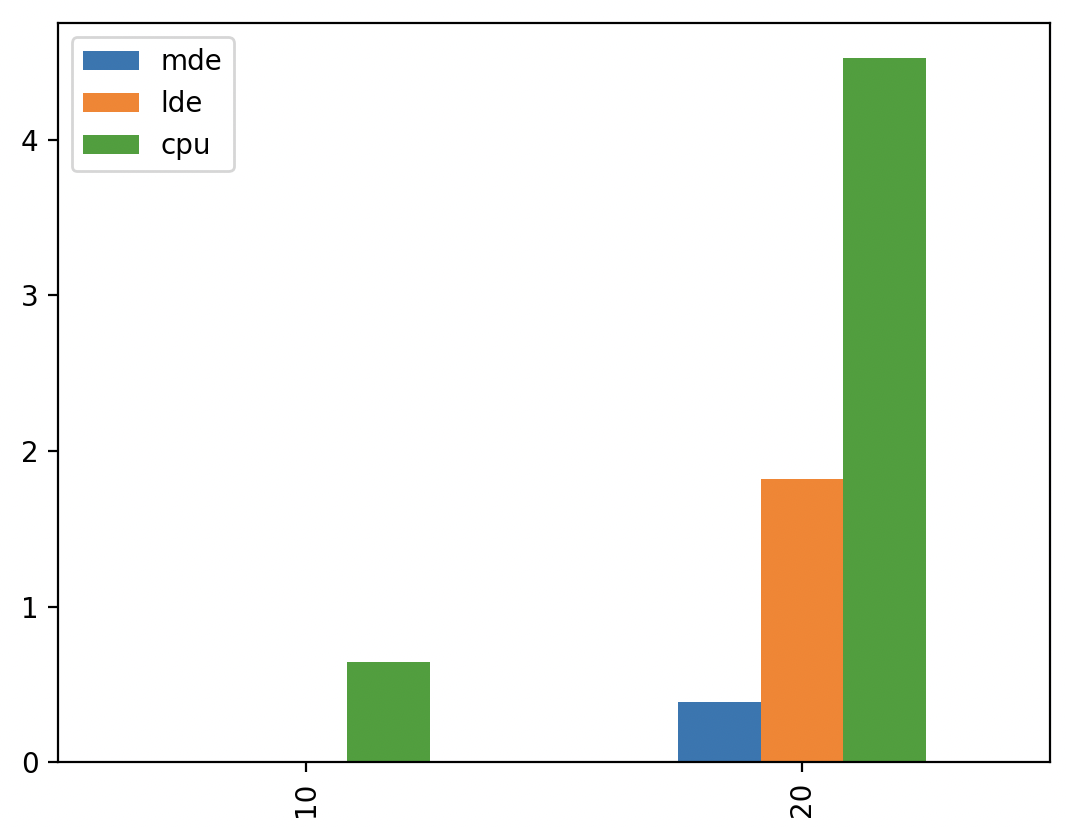}
    \end{minipage}  
  \end{center}
  \caption{Average results on approximate vertex cardinality and the corresponding bar plot for the graph class $\mathcal{R}$ (the CPU time column was scaled by 1/300).}
  \label{t:Rvtx}
\end{table}
These data show that guaranteed globally optimal solutions may only be found in 1800s of CPU time up to $|V|=15$.

In Table \ref{t:Redge} (left), we present average results grouped by formulation type. The corresponding bar plot figure is shown in Table \ref{t:Redge} (right).
\begin{table}[!ht]
  \begin{center}
    \begin{minipage}{0.48\textwidth}
      \begin{tabular}{l|rrr}
        \textbf{$\approx |E|$} & \textsf{mde} & \textsf{lde} & CPU \\ \hline
        50 & 0.0085 & 0.0678 & 294.37 \\
        100 & 0.1711 & 0.9917 & 1162.36 \\
        150 & 0.3331 & 1.6304 & 1594.27 \\
        200 & 1.1886 & 4.9158 & 1794.13
      \end{tabular}
    \end{minipage}
    \begin{minipage}{0.5\textwidth}
      \includegraphics[width=\textwidth,height=0.15\textheight]{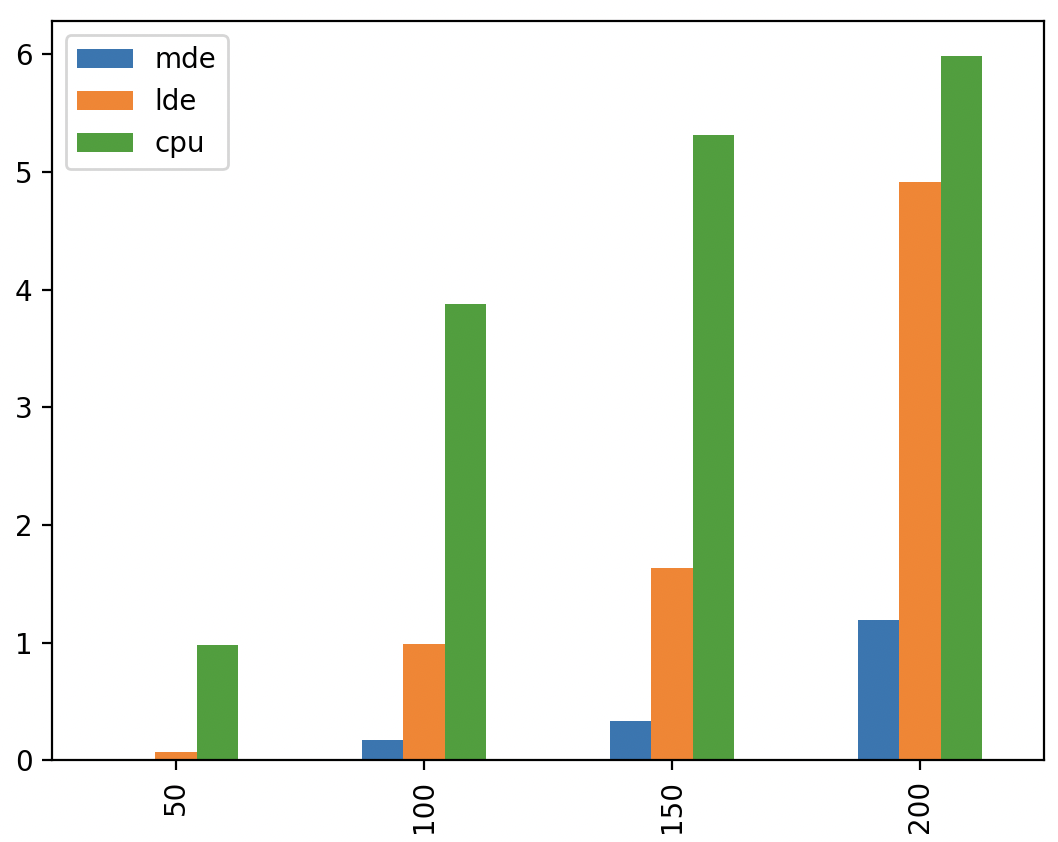}
    \end{minipage}  
  \end{center}
  \caption{Average results on approximate edge cardinality and the corresponding bar plot for the graph class $\mathcal{R}$ (the CPU time column was scaled by 1/300).}
  \label{t:Redge}
\end{table}
These data show that guaranteed optimal solutions in 1800s are more likely to happen with as few edges as possible: very likely with 50 edges, and very unlikely with 200.

In Table \ref{t:Rdens} (left), we present average results grouped by edge density. The corresponding bar plot figure is shown in Table \ref{t:Rdens} (right).
\begin{table}[!ht]
  \begin{center}
    \begin{minipage}{0.48\textwidth}
      \begin{tabular}{l|rrr}
        \textbf{$\approx$ density} & \textsf{mde} & \textsf{lde} & CPU \\ \hline
        0.4 & 0.3945 & 2.3032 & 1532.25 \\
        0.5 & 0.3024 & 1.8276 & 1160.63 \\
        0.6 & 0.0001 & 0.0004 & 373.80 \\
        0.7 & 0.0000 & 0.0001 & 1062.62 \\
        0.8 & 0.3705 & 1.7659 & 1233.24 \\
        0.9 & 0.2539 & 0.9882 & 880.90 \\
        1.0 & 0.1654 & 0.7392 & 609.35 
      \end{tabular}
    \end{minipage}
    \begin{minipage}{0.5\textwidth}
      \includegraphics[width=\textwidth,height=0.2\textheight]{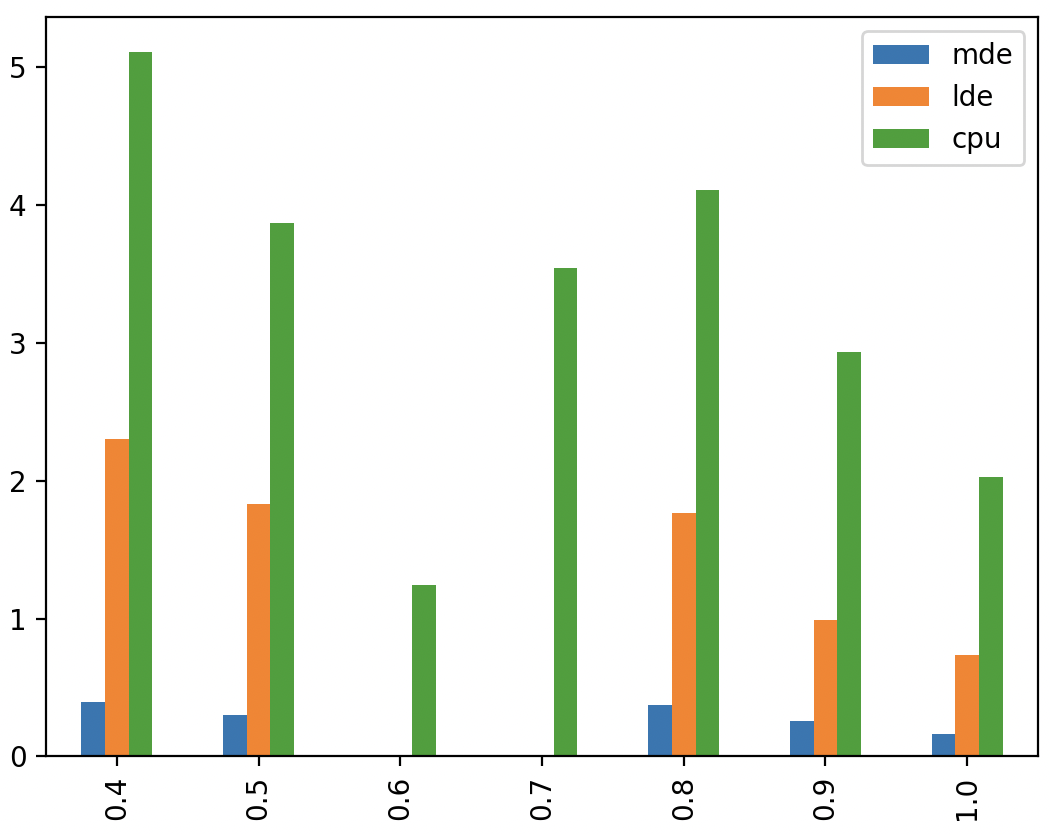}
    \end{minipage}  
  \end{center}
  \caption{Average results on approximate edge density and the corresponding bar plot for the graph class $\mathcal{R}$ (the CPU time column was scaled by 1/300).}
  \label{t:Rdens}
\end{table}
Guaranteed optimal solutions in 1800s are most likely with edge densities around $0.6$-$0.7$.

In Table \ref{t:Rftp} (left), we present average results grouped by formulation type. The corresponding bar plot figure is shown in Table \ref{t:Rftp} (right).
\begin{table}[!ht]
  \begin{center}
    \begin{minipage}{0.48\textwidth}
      \begin{tabular}{l|rrr}
      \textbf{formulation} & \textsf{mde} & \textsf{lde} & CPU \\ \hline
      \textsf{cycle} & 0.0046 & 0.0197 & 806.88 \\
      \textsf{cyclesimple} & 0.1074 & 0.4928 & 1042.94 \\
      \textsf{cycpushpull} & 0.0007 & 0.0037 & 1256.00 \\
      \textsf{cycsimplepushpull} & 0.0013 & 0.0067 & 1451.29 \\
      \textsf{cycsimplesys1} & 0.0000 & 0.0002 & 276.27 \\
      \textsf{cycsimplesys2} & 0.0861 & 0.3885 & 930.46 \\
      \textsf{cycsys1} & 0.0000 & 0.0001 & 453.94 \\
      \textsf{cycsys2} & 0.0447 & 0.3032 & 771.03 \\
      \textsf{pullpush} & 2.2092 & 9.7720 & 668.97 \\
      \textsf{pushpull} & 0.0001 & 0.0002 & 1353.12 \\
      \textsf{quartic} & 0.2173 & 1.3483 & 808.64 \\
      \textsf{system1} & 0.0000 & 0.0002 & 490.75 \\
      \textsf{system2} & 0.2141 & 1.2033 & 840.52
      \end{tabular}
    \end{minipage}
    \begin{minipage}{0.5\textwidth}
      \includegraphics[width=\textwidth,height=0.3\textheight]{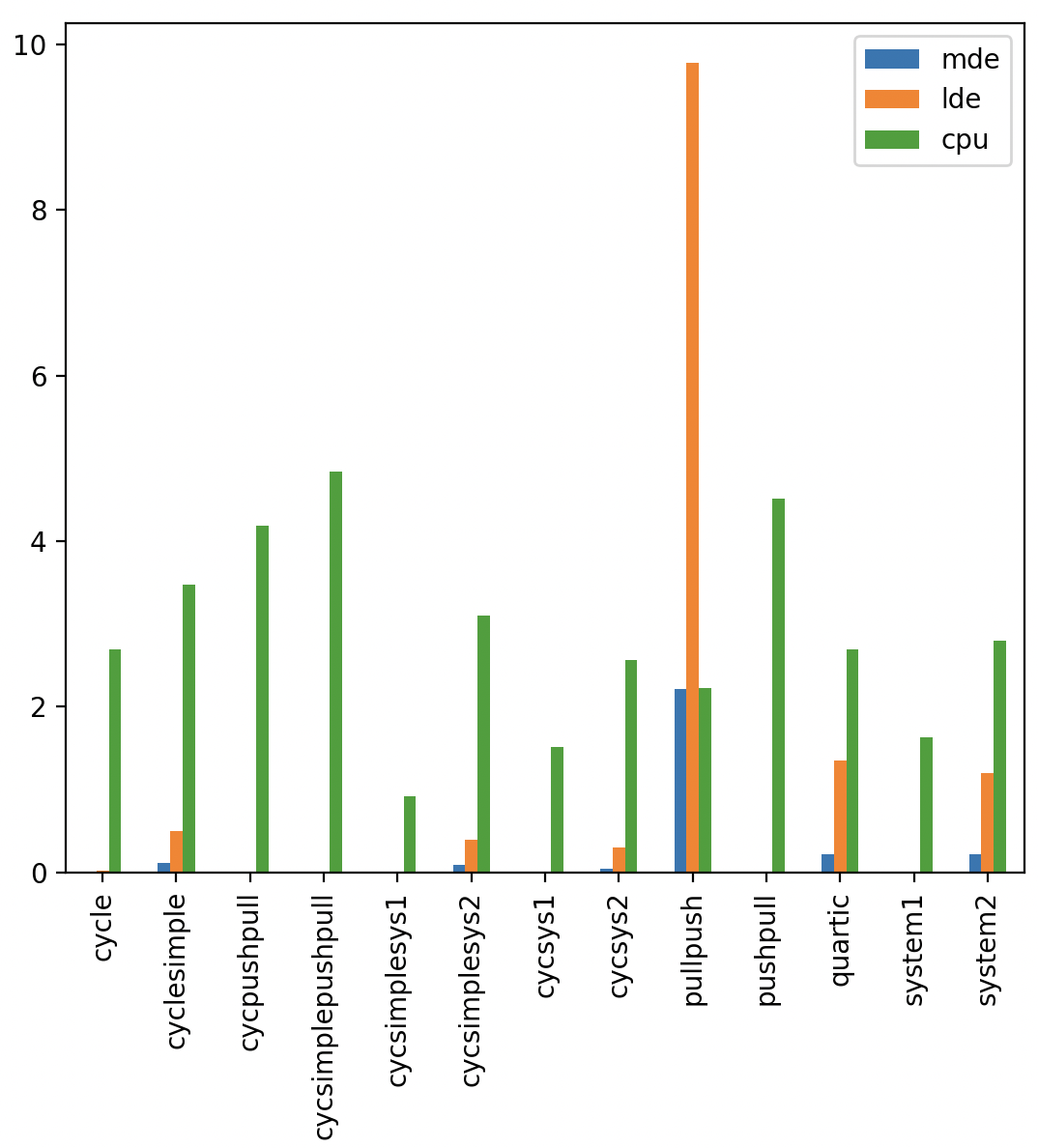}
    \end{minipage}  
  \end{center}
  \caption{Average results on formulation types for the graph class $\mathcal{R}$ (the CPU time column was scaled by 1/300).}
  \label{t:Rftp}
\end{table}
The formulations that were able to yield guaranteed optimal solutions in 1800s to precision $10^{-4}$ on average were: \textsf{cycsimplesys1}, \textsf{pushpull}, \textsf{system1}. The formulations \textsf{cycpushpull} and \textsf{cycsimplepushpull} went up to precision $10^{-3}$ on average. We note that \textsf{pullpush} was the worst-performing formulation by far (this is consistent with the observation below Eq.~\eqref{pullpush}).
\begin{table}[!ht]
  \begin{center}
    \begin{minipage}{0.22\textwidth}
      \begin{tabular}{l|r}
        $\approx |V|$ & CPU \\ \hline
        10 & 82.91 \\
        20 & 276.23 \\
        \multicolumn{2}{c}{} \\
        \textbf{$\approx |E|$} & CPU \\ \hline
        50 & 108.30 \\
        100 & 181.33 \\
        150 & 438.50 \\
        200 & 1400.09 \\
        \multicolumn{2}{c}{}  \\
        \textbf{$\approx$ dens.} & CPU \\ \hline
        0.4 & 22.96 \\
        0.5 & 322.11 \\
        0.6 & 33.85 \\
        0.7 & 197.63 \\
        0.8 & 337.40 \\
        0.9 & 188.36 \\
        1.0 & 58.97 
      \end{tabular}      
    \end{minipage}
    \hspace*{-1.4em}\begin{minipage}{0.25\textwidth}
      \includegraphics[width=\textwidth,height=0.07\textheight]{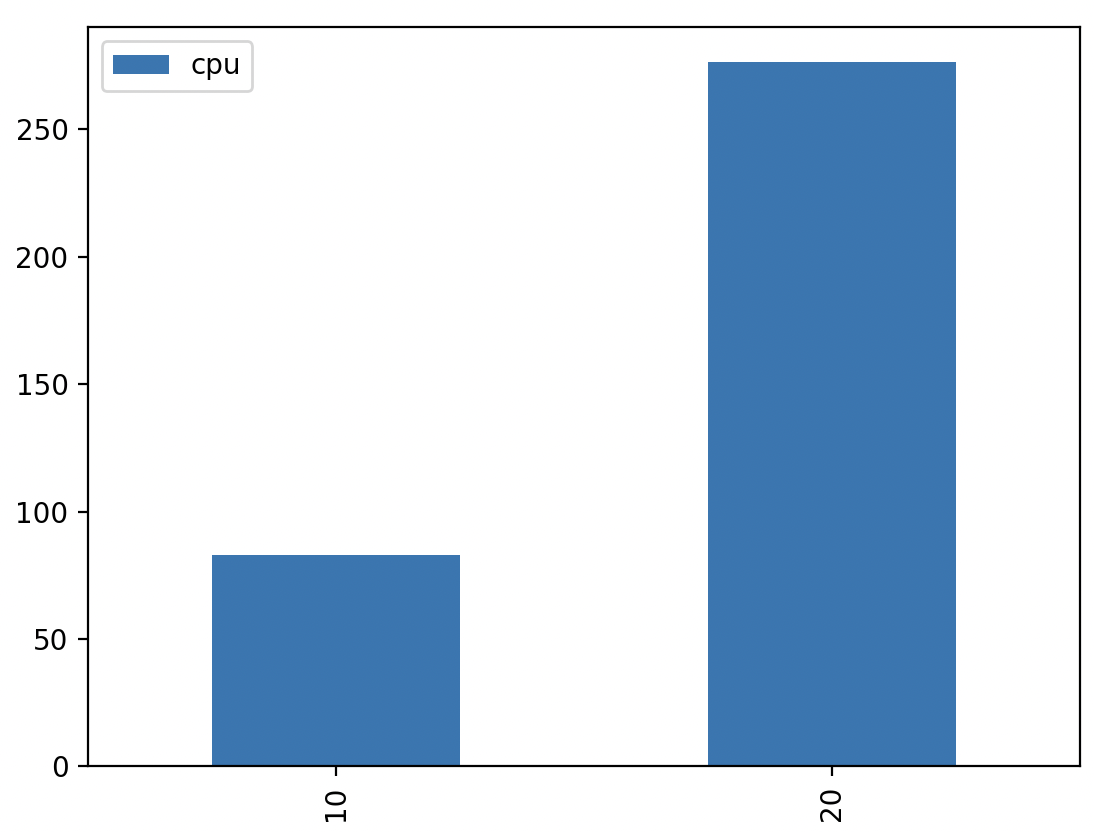}
      \includegraphics[width=\textwidth,height=0.13\textheight]{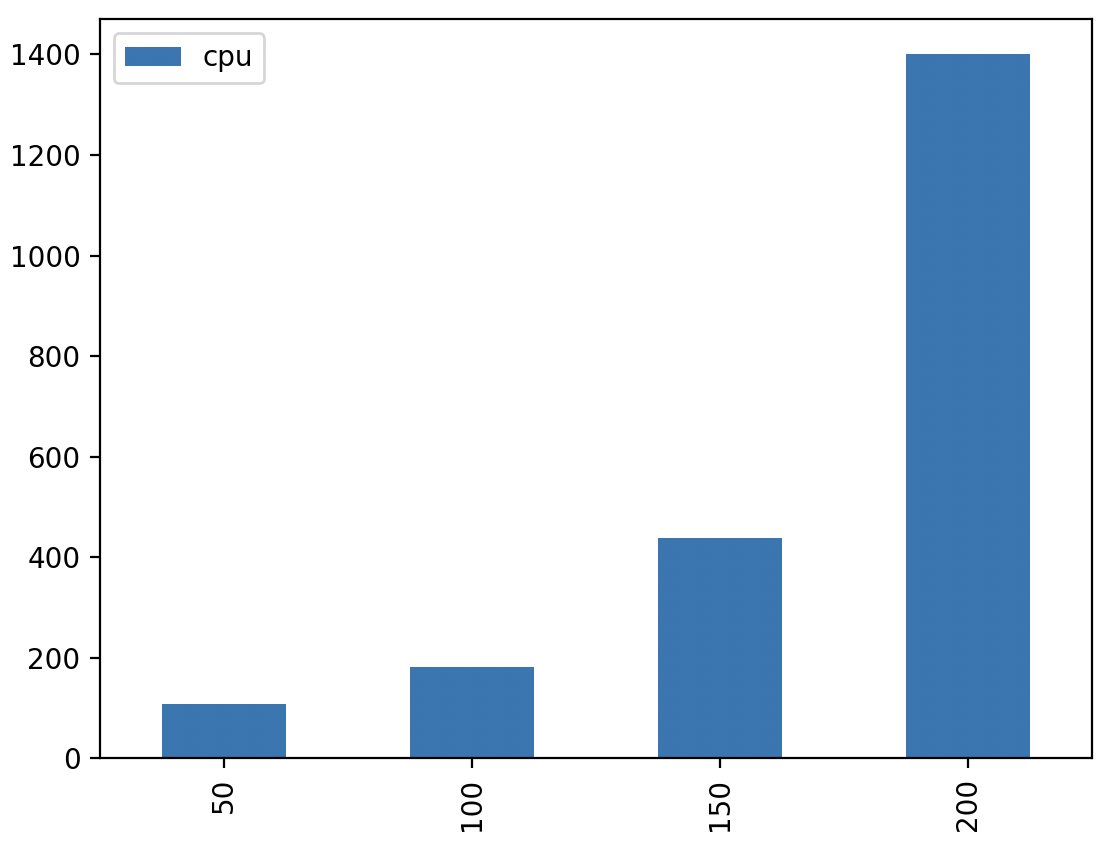}
      \includegraphics[width=\textwidth,height=0.13\textheight]{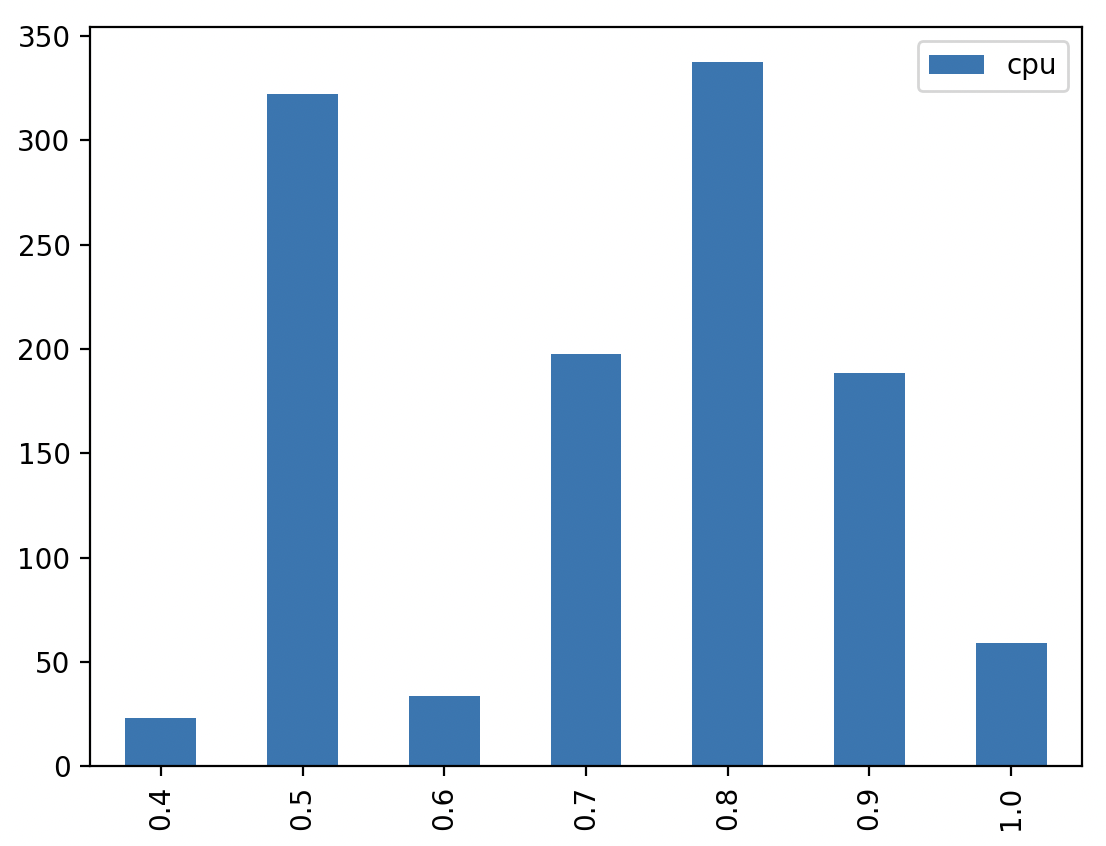}
    \end{minipage}    
    \begin{minipage}{0.25\textwidth}
      \begin{tabular}{l|r}
        \textbf{formulation} & CPU \\ \hline
        \textsf{cycle} & 163.99 \\
        \textsf{cyclesimple} & 30.93 \\
        \textsf{cycpushpull} & 398.56 \\
        \textsf{cycsimple-} & \\ \hspace*{1em}\textsf{pushpull} & 369.86 \\
        \textsf{cycsimplesys1} & 92.64 \\
        \textsf{cycsimplesys2} & 59.92 \\
        \textsf{cycsys1} & 83.10 \\
        \textsf{cycsys2} & 199.12 \\
        \textsf{pullpush} & 137.02 \\
        \textsf{pushpull} & 403.17 \\
        \textsf{quartic} & 64.25 \\
        \textsf{system1} & 53.70 \\
        \textsf{system2} & 120.11
      \end{tabular}
    \end{minipage}
    \begin{minipage}{0.25\textwidth}
      \hspace*{-1em}\includegraphics[width=1.2\textwidth,height=0.22\textheight]{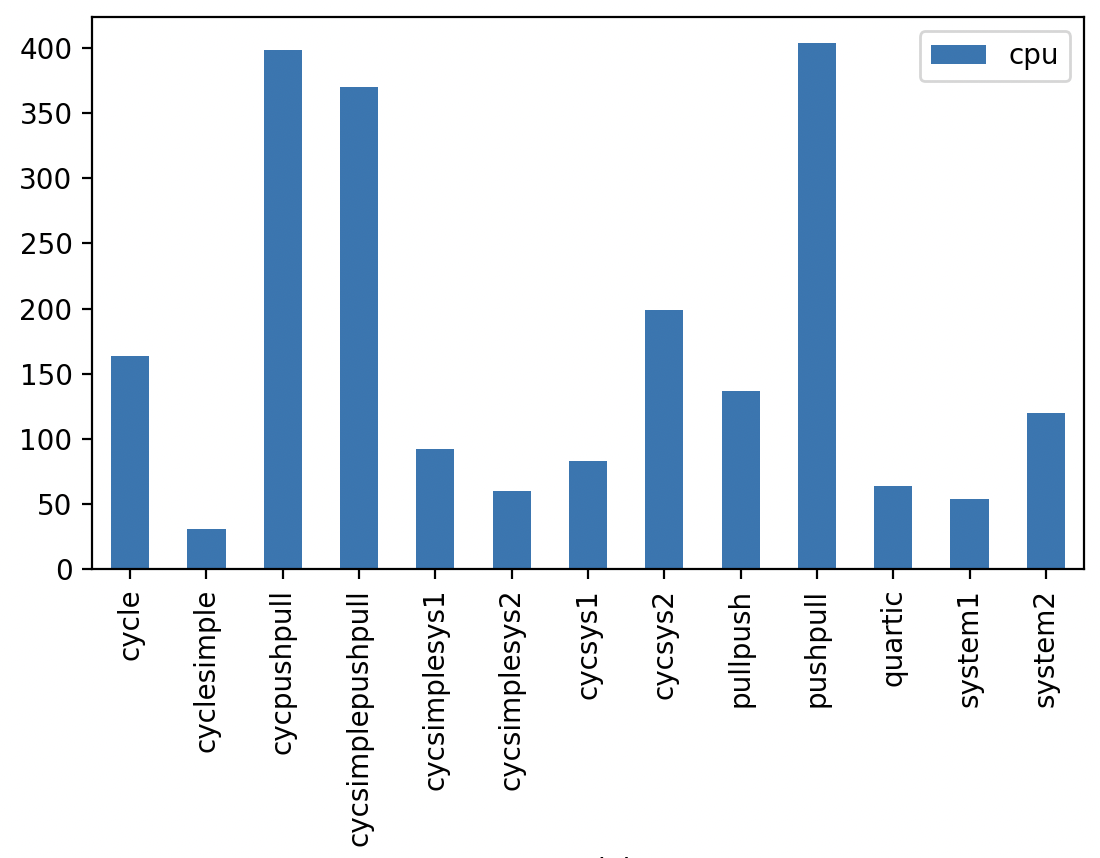}
    \end{minipage}  
  \end{center}
  \caption{Average CPU time for naturally terminating (instance,formulation) pairs in $\mathcal{R}$.}
  \label{t:R_cpu}
\end{table}

We now restrict the analysis to the cases where the global nonconvex NLP solver reached termination within the allotted time limit (1800s), which happened in 206 (instance, formulation) pairs. The average \textsf{mde} for these cases is $0.66\times 10^{-6}$, the average \textsf{lde} is $0.0002$, the average CPU time is $\approx 140$s. All of these pairs have \textsf{mde} and \textsf{lde} between $0$ and $0.0006$, which we do not report, as they are essentially zero, and indicate a correct realization. The average CPU times grouped by vertex/edge cardinality, graph density, and formulation type are given in Table \ref{t:R_cpu}. While the picture does not change too much for $|V|$, $|E|$, and edge density, the best performing formulations are different in this test with respect to the previous test: the best performing formulation was \textsf{cyclesimple}, followed by \textsf{system1}, \textsf{quartic}, and \textsf{cycsimplesys2}.

\subsubsection{The 309-graph collection $\mathcal{G}$}
\label{s:dgp:gph}
We use this graph collection (Sect.~\ref{s:inst:gph}) to establish whether it is preferable to solve DGP instances with local NLP solvers on non-matrix DGP formulations within a MS algorithm, or to use the matrix formulation solution process. The test is such that we need only group results by formulation type.

In Table \ref{t:Gftp}, we present average results grouped by formulation type. The corresponding bar plot figure is shown in Fig.~\ref{f:Gftp}.
\begin{figure}[!ht]
  \begin{center}
    \includegraphics[width=\textwidth]{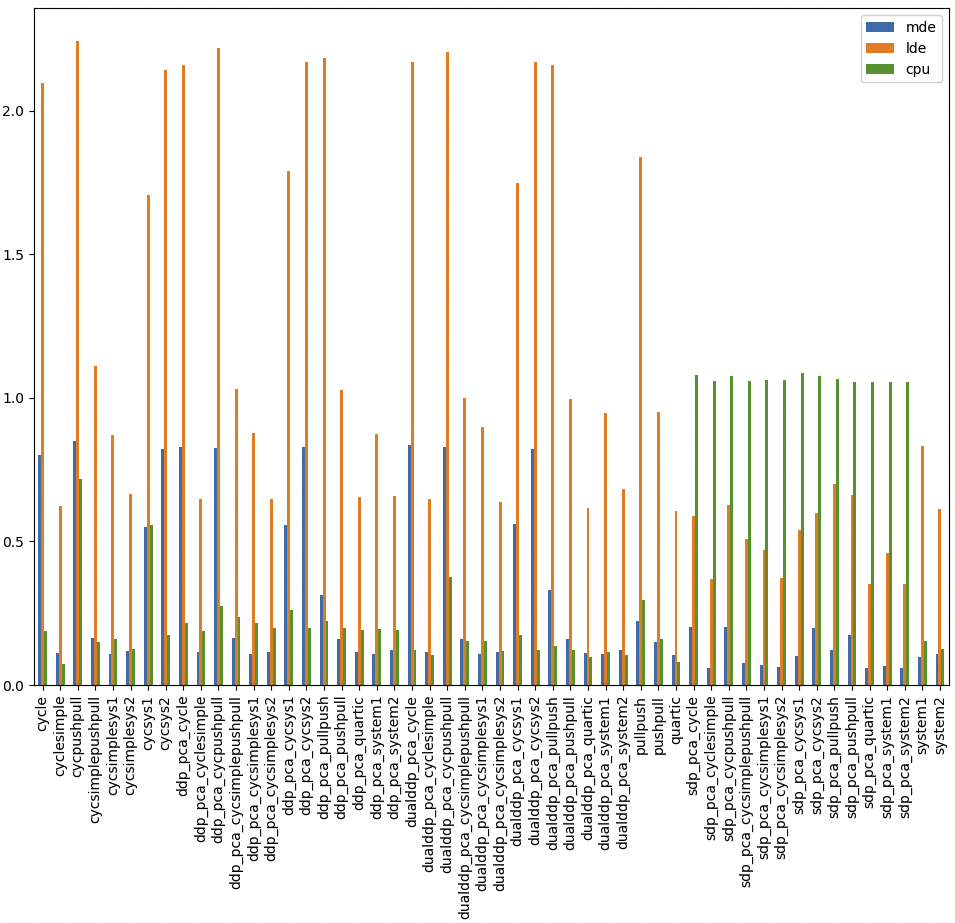}
  \end{center}
  \caption{The bar plot for the graph class $\mathcal{G}$ corresponding to Table \ref{t:Gftp}.}
  \label{f:Gftp}
\end{figure}
The best performing formulations in terms of solution quality are \textsf{sdp\_pca\_}$\chi$ with \[\chi\in\{\mathsf{quartic}, \mathsf{cyclesimple}, \mathsf{system2}\};\] in terms of CPU time we have \textsf{cyclesimple}, \textsf{quartic}, \textsf{system2}, and \textsf{dualddp\_pca\_}$\chi$ with $\chi$ as above. An important remark is that the best formulations in terms of CPU time are also very good in terms of solution quality. Among these, six are matrix formulations of SDP and dualDDP types, and three are non-matrix.
\begin{table}[!ht]
  \begin{center}
    \begin{tabular}{l|rrr}
      \textbf{formulation} & \textsf{mde} & \textsf{lde} & CPU \\ \hline
      \textsf{cycle} & 0.8001 & 2.0958 & 0.19 \\ [-0.2em]
      \textsf{cyclesimple} & 0.1128 & 0.6248 & 0.07 \\ [-0.2em]
      \textsf{cycpushpull} & 0.8492 & 2.2438 & 0.72 \\ [-0.2em]
      \textsf{cycsimplepushpull} & 0.1635 & 1.1107 & 0.15 \\ [-0.2em]
      \textsf{cycsimplesys1} & 0.1088 & 0.8713 & 0.16 \\ [-0.2em]
      \textsf{cycsimplesys2} & 0.1190 & 0.6669 & 0.13 \\ [-0.2em]
      \textsf{cycsys1} & 0.5517 & 1.7059 & 0.56 \\ [-0.2em]
      \textsf{cycsys2} & 0.8206 & 2.1401 & 0.18 \\ [-0.2em]
      \textsf{ddp\_pca\_cycle} & 0.8278 & 2.1597 & 0.22 \\ [-0.2em]
      \textsf{ddp\_pca\_cyclesimple} & 0.1157 & 0.6470 & 0.19 \\ [-0.2em]
      \textsf{ddp\_pca\_cycpushpull} & 0.8269 & 2.2188 & 0.28 \\ [-0.2em]
      \textsf{ddp\_pca\_cycsimplepushpull} & 0.1633 & 1.0303 & 0.24 \\ [-0.2em]
      \textsf{ddp\_pca\_cycsimplesys1} & 0.1090 & 0.8762 & 0.21 \\ [-0.2em]
      \textsf{ddp\_pca\_cycsimplesys2} & 0.1154 & 0.6478 & 0.20 \\ [-0.2em]
      \textsf{ddp\_pca\_cycsys1} & 0.5578 & 1.7894 & 0.26 \\ [-0.2em]
      \textsf{ddp\_pca\_cycsys2} & 0.8273 & 2.1707 & 0.20 \\ [-0.2em]
      \textsf{ddp\_pca\_pullpush} & 0.3141 & 2.1826 & 0.22 \\ [-0.2em]
      \textsf{ddp\_pca\_pushpull} & 0.1621 & 1.0275 & 0.20 \\ [-0.2em]
      \textsf{ddp\_pca\_quartic} & 0.1143 & 0.6536 & 0.19 \\ [-0.2em]
      \textsf{ddp\_pca\_system1} & 0.1084 & 0.8753 & 0.20 \\ [-0.2em]
      \textsf{ddp\_pca\_system2} & 0.1207 & 0.6601 & 0.19 \\ [-0.2em]
      \textsf{dualddp\_pca\_cycle} & 0.8348 & 2.1682 & 0.12 \\ [-0.2em]
      \textsf{dualddp\_pca\_cyclesimple} & 0.1141 & 0.6480 & 0.10 \\ [-0.2em]
      \textsf{dualddp\_pca\_cycpushpull} & 0.8297 & 2.2055 & 0.38 \\ [-0.2em]
      \textsf{dualddp\_pca\_cycsimplepushpull} &0.1614&1.0001 & 0.15 \\ [-0.2em]
      \textsf{dualddp\_pca\_cycsimplesys1} & 0.1083 & 0.8970 & 0.15 \\ [-0.2em]
      \textsf{dualddp\_pca\_cycsimplesys2} & 0.1136 & 0.6361 & 0.12 \\ [-0.2em]
      \textsf{dualddp\_pca\_cycsys1} & 0.5596 & 1.7468 & 0.17 \\ [-0.2em]
      \textsf{dualddp\_pca\_cycsys2} & 0.8235 & 2.1685 & 0.12 \\ [-0.2em]
      \textsf{dualddp\_pca\_pullpush} & 0.3314 & 2.1606 & 0.14 \\ [-0.2em]
      \textsf{dualddp\_pca\_pushpull} & 0.1616 & 0.9969 & 0.12 \\ [-0.2em]
      \textsf{dualddp\_pca\_quartic} & 0.1104 & 0.6177 & 0.10 \\ [-0.2em]
      \textsf{dualddp\_pca\_system1} & 0.1083 & 0.9467 & 0.12 \\ [-0.2em]
      \textsf{dualddp\_pca\_system2} & 0.1206 & 0.6822 & 0.10 \\ [-0.2em]
      \textsf{pullpush} & 0.2235 & 1.8404 & 0.29 \\ [-0.2em]
      \textsf{pushpull} & 0.1516 & 0.9501 & 0.16 \\ [-0.2em]
      \textsf{quartic} & 0.1042 & 0.6053 & 0.08 \\ [-0.2em]
      \textsf{sdp\_pca\_cycle} & 0.2006 & 0.5890 & 1.08 \\ [-0.2em]
      \textsf{sdp\_pca\_cyclesimple} & 0.0607 & 0.3683 & 1.06 \\ [-0.2em]
      \textsf{sdp\_pca\_cycpushpull} & 0.2027 & 0.6268 & 1.08 \\ [-0.2em]
      \textsf{sdp\_pca\_cycsimplepushpull} & 0.0773 & 0.5090 & 1.06 \\ [-0.2em]
      \textsf{sdp\_pca\_cycsimplesys1} & 0.0705 & 0.4720 & 1.06 \\ [-0.2em]
      \textsf{sdp\_pca\_cycsimplesys2} & 0.0625 & 0.3718 & 1.06 \\ [-0.2em]
      \textsf{sdp\_pca\_cycsys1} & 0.1021 & 0.5414 & 1.09 \\ [-0.2em]
      \textsf{sdp\_pca\_cycsys2} & 0.2004 & 0.6002 & 1.08 \\ [-0.2em]
      \textsf{sdp\_pca\_pullpush} & 0.1215 & 0.6993 & 1.07 \\ [-0.2em]
      \textsf{sdp\_pca\_pushpull} & 0.1744 & 0.6633 & 1.06 \\ [-0.2em]
      \textsf{sdp\_pca\_quartic} & 0.0593 & 0.3524 & 1.06 \\ [-0.2em]
      \textsf{sdp\_pca\_system1} & 0.0663 & 0.4607 & 1.06 \\ [-0.2em]
      \textsf{sdp\_pca\_system2} & 0.0581 & 0.3515 & 1.06 \\ [-0.2em]
      \textsf{system1} & 0.0986 & 0.8314 & 0.15 \\ [-0.2em]
      \textsf{system2} & 0.1077 & 0.6118 & 0.13 
      \end{tabular}
  \end{center}
  \caption{Average results on formulation types for the graph class $\mathcal{G}$.}
  \label{t:Gftp}
\end{table}
Our answer to the second question in relation to DGP is therefore that the matrix formulation process is generally better, but the refinement step is crucially important.

Since the instance family $\mathscr{G}$ contains many graph types, we also present average results grouped by formulation type in Table \ref{t:Ggtp} (left), and the corresponding bar plot figure in Table \ref{t:Ggtp} (right).
\begin{table}[!ht]
  \begin{center}
    \begin{minipage}{0.40\textwidth}
      \begin{tabular}{l|rrr}
        \textbf{graph type} & \textsf{mde} & \textsf{lde} & CPU \\ \hline
        Walmostreg & 0.0916 & 0.4283 & 0.15 \\
        Wbipartite & 0.0008 & 0.0460 & 1.31 \\
        Wcliquechain & 0.2607 & 0.9838 & 0.15 \\
        Wcluster & 0.2021 & 0.7913 & 0.19 \\
        Wdmdgp & 0.2680 & 1.0036 & 0.15 \\
        Wmesh & 0.0106 & 0.1382 & 0.13 \\
        Wpowerlaw & 0.0571 & 0.4401 & 0.11 \\
        Wrandom & 0.0755 & 0.4857 & 0.14 \\
        Wtorus & 0.0501 & 0.3761 & 0.10 \\
        Wtriangle & 0.1551 & 0.6157 & 0.06 \\
        Wtrichain & 0.0363 & 0.1330 & 0.04 \\
        Wtripartite & 0.0086 & 0.1551 & 4.23 \\
        almostreg & 0.0410 & 0.2685 & 0.14 \\
        beeker\_glusa & 0.0000 & 0.0000 & 0.04 \\
        bipartite & 0.0000 & 0.0000 & 0.88 \\
        cliquechain & 0.2419 & 1.0672 & 0.17 \\
        cluster & 0.1713 & 0.7952 & 0.18 \\
        dmdgp & 0.2512 & 1.0721 & 0.17 \\
        local & 0.1436 & 0.6596 & 0.26 \\
        mesh & 0.0000 & 0.0000 & 0.09 \\
        norm & 3.3560 & 12.0053 & 0.57 \\
        powerlaw & 0.0086 & 0.2692 & 0.18 \\
        random & 0.0464 & 0.2732 & 0.13 \\
        torus & 0.0070 & 0.0659 & 0.08 \\
        triangle & 0.0853 & 0.4461 & 0.06 \\
        trichain & 0.0034 & 0.0269 & 0.04 \\
        tripartite & 0.0000 & 0.0000 & 2.10 
      \end{tabular}
    \end{minipage}
    \begin{minipage}{0.58\textwidth}
      \includegraphics[width=\textwidth,height=0.5\textheight]{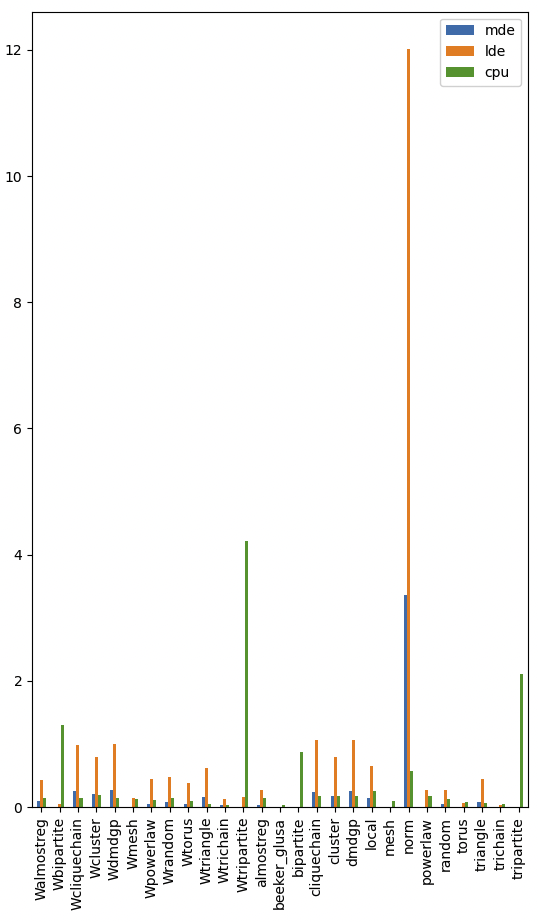}
    \end{minipage}  
  \end{center}
  \caption{Average results on graph types and the corresponding bar plot for the graph class $\mathcal{G}$.}
  \label{t:Ggtp}
\end{table}

\subsubsection{The protein graph collection $\mathcal{P}$}
\label{s:dgp:prot}
We attempt to reconstruct the shape of proteins from a partial set of inter-atomic distances with their adjacencies (Sect.~\ref{s:inst:prot}) by using the methods that appear more promising from earlier DGP experiments (Sect.~\ref{s:dgp:euclgph}-\ref{s:dgp:gph}), namely: the non-matrix formulations \textsf{cyclesimple}, \textsf{quartic}, \textsf{system2} and the matrix formulations based on SDP and dual DDP followed by PCA, with refinement step carried out using the non-matrix formulations above. Since SDP solvers fail with larger instances (from \textsf{3al} upwards, see Table \ref{s:inst:prot}), we replaced SDP by DDP in such cases. For non-matrix formulations we used IPOPT within the MS algorithm (Alg.~\ref{a:ms}) with a 10 iterations limit.

We first present results about the best formulation per protein instance. By ``best'' we mean the best trade-off between \textsf{mde} and \textsf{lde}. When no solution dominates the others over both measures the choice was made by the authors of this survey, based on their past experience. 
\begin{table}[!ht]
  \begin{center}
    \begin{tabular}{l|rrrr}
      \textbf{instance} & formulation & \textsf{mde} & \textsf{mde} & CPU \\ \hline \hline
      \textsf{tiny} & \textsf{dualddp\_cyclesimple} & 0.00 & 0.00 & 0.19 \\
      \textsf{1guu-1} & \textsf{dualddp\_system2} & 0.06 & 1.05 & 0.83 \\
      \textsf{1guu-400} & \textsf{system2} & 0.08 & 0.97 & 2.70 \\
      \textsf{C0030pkl} & \textsf{dualddp\_cyclesimple} & 0.04 & 1.32 & 5.68 \\
      \textsf{1PPT} & \textsf{quartic} & 0.26 & 2.58 & 8.48 \\
      \textsf{1guu} & \textsf{sdp\_system2} & 0.05 & 0.87 & 8476.58 \\
      \textsf{100d} & \textsf{cyclesimple} & 0.33 & 3.06 & 13.50 \\ \hline
      \textsf{3al1} & \textsf{system2} & 0.04 & 2.28 & 498.07 \\
      \textsf{1hpv} & \textsf{ddp\_quartic} & 0.40 & 3.62 & 214.49 \\
      \textsf{il2} & \textsf{ddp\_quartic} & 0.03 & 4.36 & 1262.47 \\
      \textsf{1tii} & \textsf{dualddp\_quartic} & 0.43 & 4.10 & 2928.25 
    \end{tabular}
  \end{center}
  \caption{Best formulations per instance of graph class $\mathcal{P}$. The SDP formulation was replaced by the DDP formulation in the lower half due to excessive size.}
  \label{t:dgp:prot:best}
\end{table}

The results grouped by formulation type (independently of the instance) are given in Table \ref{t:dgp:prot:form}. 
\begin{table}[!ht]
  \begin{center}
    \begin{tabular}{lr}
      \begin{minipage}{0.55\textwidth}
        \begin{tabular}{l|rrr}
          \textbf{formulation} & \textsf{mde} & \textsf{lde} & CPU \\ \hline
          \textsf{cyclesimple} & 0.2343 & 2.9293 & 532.43 \\
          \textsf{quartic} & 0.1710 & 2.5200 & 2795.65 \\
          \textsf{system2} & 0.1518 & 2.4399 & 6512.35 \\
          \textsf{sdp\_cyclesimple} & 0.1777 & 2.1073 & 2593.07 \\
          \textsf{sdp\_quartic} & 0.2339 & 2.1764 & 2738.82 \\
          \textsf{sdp\_system2} & 0.1740 & 2.1560 & 2958.58 \\
          \textsf{ddp\_cyclesimple} & 0.2859 & 3.9973 & 1280.88 \\
          \textsf{ddp\_quartic} & 0.2460 & 4.1536 & 1024.87 \\
          \textsf{ddp\_system2} & 0.2800 & 4.2591 & 2036.27 \\
          \textsf{dualddp\_cyclesimple} & 0.2013 & 2.5855 & 437.30 \\
          \textsf{dualddp\_quartic} & 0.2273 & 2.7961 & 370.53 \\
          \textsf{dualddp\_system2} & 0.2334 & 2.7907 & 634.17         
        \end{tabular} 
      \end{minipage} & 
      \begin{minipage}{0.4\textwidth}
        \hspace*{-2em}\includegraphics[width=1.2\textwidth]{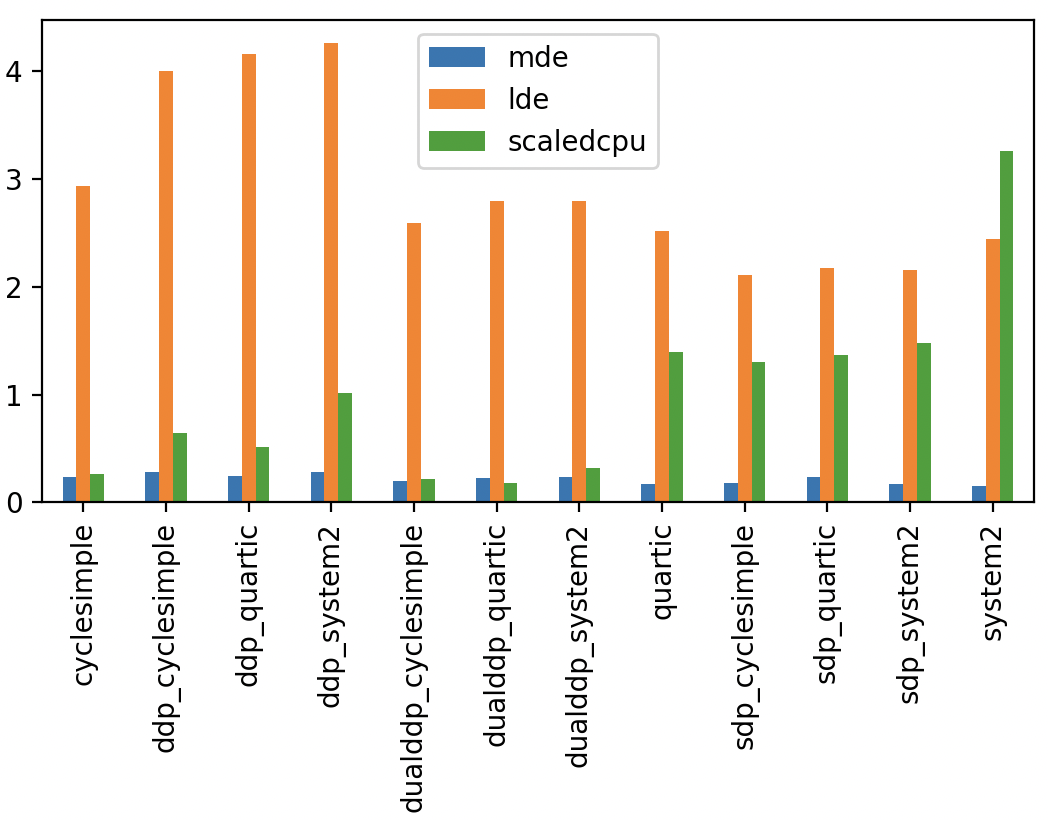}
      \end{minipage}
    \end{tabular}
  \end{center}
  \caption{Average results (over all instances) on formulation types for the graph class $\mathcal{P}$.}
  \label{t:dgp:prot:form}
\end{table}
The best formulation as regards solution quality (\textsf{mde}, \textsf{lde}) is \textsf{system2}, which is also the worst for CPU time (but CPU time is not a crucial measure for the purpose of finding the structure of proteins). The formulation \textsf{quartic} is the closest competitor. We note that, however, the variance of \textsf{mde}, \textsf{lde} over all of the tested formulations is small. The fastest formulation is \textsf{dualddp\_pca\_quartic}, with \textsf{ddp\_pca\_quartic} the closest competitor. The CPU time variance is small for all formulations but \textsf{quartic} and \textsf{system2}, which took considerably longer to solve. The best trade-off overall between quality and CPU time is \textsf{ddp\_pca\_system2}. 

\subsection{UDGP tests and results}
\label{s:udgpres}
\label{s:res:uforms}

We have tested the following exact formulations on the set of UDGP instances obtained as explained in Sect.~\ref{s:inst:udgp}:
\begin{enumerate}
   \ifspringer\else\setlength{\parskip}{-0.2em}\fi
\item \textsf{ucycsimplesys1} (Eq.~\eqref{ucycsystem_ell1}),
\item \textsf{upushpull} (Eq.~\eqref{upushpull}),
\item \textsf{uquartic} (Eq.~\eqref{uquartic}),
\item \textsf{uquarticcont} (Eq.~\eqref{uquartic_cont}),
\item \textsf{usystem1} (Eq.~\eqref{usystem_ell1}),
\item \textsf{usystem2} (Eq.~\eqref{usystem}),
\end{enumerate}
and the approximate matrix reformulations:
\begin{enumerate}
   \ifspringer\else\setlength{\parskip}{-0.2em}\fi
\item \textsf{misdp\_usystem1} (Eq.~\eqref{Xusystem_ell1} with $\mathbf{X}=\mathbf{S}_n^+$),
\item \textsf{middp\_usystem1} (Eq.~\eqref{Xusystem_ell1} with $\mathbf{X}=\mathbf{D}_n$),
\item \textsf{midualddp\_usystem1} (Eq.~\eqref{Xusystem_ell1} with $\mathbf{X}=\mathbf{D}^\ast_n$).
\end{enumerate}

We present our computational results grouped in the same way as for the DGP results (Sect.~\ref{s:dgpres}). The quality of the realization is measured by \textsf{mde} and \textsf{lde}, as in Sect.~\ref{s:dgpres}.

While there is no intrinsic measure of the assignment $\alpha$, all our UDGP instances are generated from a DGP one (by losing the graph and keeping the distance values), so we can evaluate the difference between the graph $G_\alpha$ reconstructed from $\alpha$ and the original graph $G$ that gave rise to the UDGP instance. This measure, called \textsf{gphsim}, is based on an evaluation of label-independent topological similarity of graphs if $G,G_\alpha$ have the same number of nodes, and on a comparison of the (zero-padded) spectra of the Laplacian matrices \cite{cvetkovic} of $G,G_\alpha$ otherwise (we let $\mathsf{spectrumLaplacian}(G)$ be the vector of eigenvalues of the Laplacian matrix of $G$); it makes use of the normalized adjacency matrix $\nadj{G}$ of a graph $G$, which is its adjacency matrix scaled by its matrix norm. The measure \textsf{gphsim} is computed as in Alg.~\ref{alg:gphsim}. It has values in $[-1,1]$, with $\mathtt{gphsim}=1$ if $G,G_\alpha$ are isomorphic graphs.
\begin{algorithm}[!ht]
 \begin{algorithmic}[1]
  \IF{$|V(G)|=|V(G_\alpha)|$}
    \STATE $\mathtt{gphsim} = 0$
    \IF{$G,G_\alpha$ have matching degree sequences}
      \STATE $\mathtt{gphsim} \leftarrow \mathtt{gphsim} + 1$
      \IF{$G,G_\alpha$ have matching triangle sequences}
        \STATE $\mathtt{gphsim} \leftarrow \mathtt{gphsim} + 1$
        \IF{$G,G_\alpha$ have matching clique sequences}
          \STATE $\mathtt{gphsim} \leftarrow \mathtt{gphsim} + 1$
          \IF{$G,G_\alpha$ are isomorphic}
            \STATE $\mathtt{gphsim} \leftarrow \mathtt{gphsim} + 1$
          \ENDIF
        \ENDIF
      \ENDIF        
    \ENDIF
    \STATE $\mathtt{gphsim} \leftarrow \mathtt{gphsim}/4$
    \IF{$\mathtt{gphsim} < 1$}
      \STATE $\mathtt{gphsim}\leftarrow\frac{1}{2}\mathtt{gphsim} + \frac{1}{2}\trace{\nadj{G}\,\nadj{H}}$
    \ENDIF
  \ELSE
    \STATE \gray{\# assume $|V(G)|<|V(G_\alpha)|$ without loss of generality}
    \STATE $s_G=\mathsf{spectrumLaplacian}(G)$ \gray{\quad \# eigenvalues in decreasing order}
    \STATE pad $s_G$ with $V(G_\alpha)-V(G)$ tailing zeros 
    \STATE $s_{G_\alpha}=\mathsf{spectrumLaplacian}(G_\alpha)$
    \STATE normalize $s_G,s_H$
    \STATE $\mathtt{gphsim} = \langle s_G,s_{G_\alpha}\rangle$
  \ENDIF
 \end{algorithmic}
 \caption{The graph similarity measure $\mathtt{gphsim}$}
 \label{alg:gphsim}
\end{algorithm}

Accordingly, for UDGP results we report \textsf{mde}, \textsf{lde} and \textsf{gphsim}. Only the first two measures attest to the success of the solution algorithm (when close to zero), while a \textsf{gphsim} measure significantly lower than $1$ might simply attest to many different graphs compatible with the given distance values.

\subsubsection{The Euclidean graph collection $\mathcal{R}$}
\label{s:udgp:euclgph}
As mentioned in Sect.~\ref{s:dgp:euclgph} for DGP instances, we use this benchmark to establish to what size we can hope to solve UDGP instances to guaranteed optimality. Tolerance and time are as in the DGP case ($10^{-6}$ and 1800s, see Sect.~\ref{s:dgp:euclgph}). As in the DGP case, given that these tests aim at establishing the size for which we may be able to solve such problems to optimality, we only tested the exact formulations \textsf{ucycsimplesys1}, \textsf{upushpull}, \textsf{uquartic}, \textsf{uquarticcont}, \textsf{usystem1}, \textsf{usystem2}.

\begin{table}[!ht]
  \begin{center}
    \begin{tabular}{lr}
      \begin{minipage}{0.4\textwidth}
        \begin{tabular}{l|rrr}
          $\approx |V|$ & \textsf{mde} & \textsf{lde} & CPU \\ \hline
           5 & 0.1533 & 0.6015  & 1037.35 \\
           8 & 2.2117 & 5.6512  & 1059.07 \\
          10 & 1.6241 & 5.8019  & 1263.91 \\
          12 & 3.3561 & 9.8532  &  669.65 \\
          15 & 4.2756 & 13.3832 & 1000.90 \\
          18 & 4.1293 & 12.4107 & 1297.91 \\
          20 & 7.0485 & 19.2524 & 1443.99 
        \end{tabular} 
      \end{minipage} & 
      \begin{minipage}{0.58\textwidth}
        \includegraphics[width=\textwidth,height=0.2\textheight]{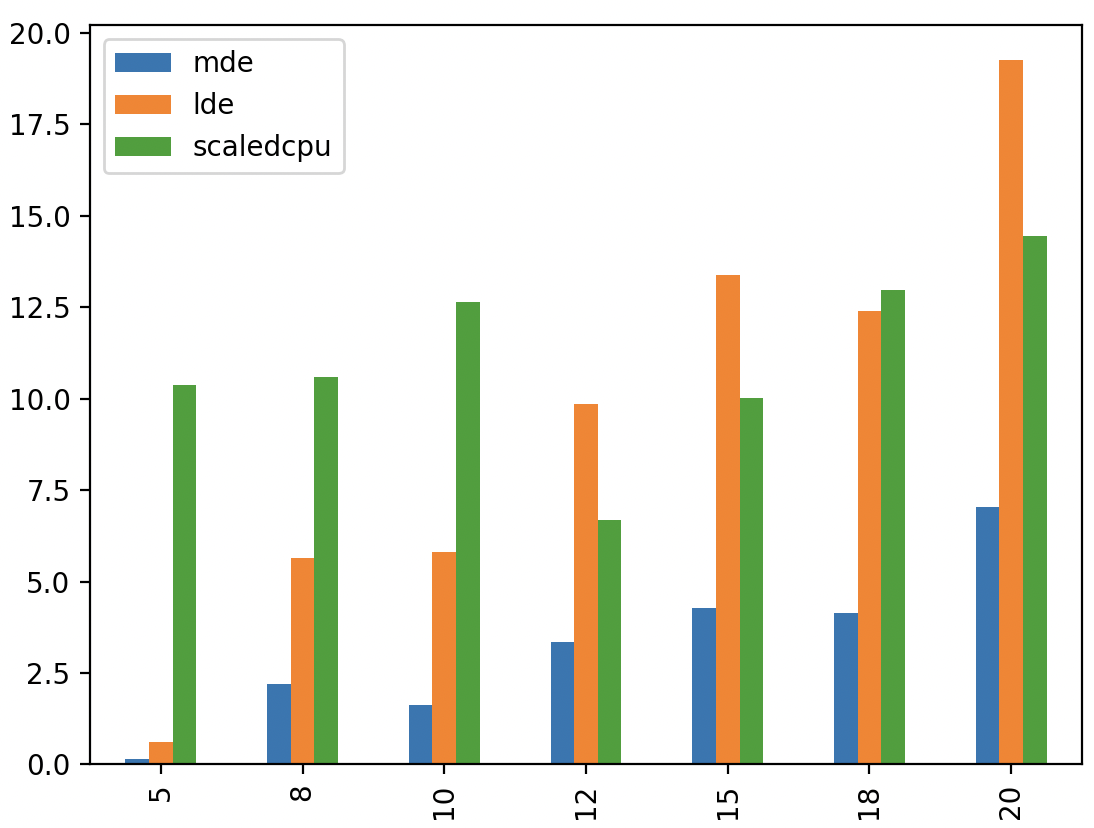}
      \end{minipage}
    \end{tabular}
  \end{center}
  \caption{Average results on $|V|$ for the graph class $\mathcal{R}$ on UDGP.}
  \label{t:udgp:Gvtx}
\end{table}

\begin{table}[!ht]
  \begin{center}
    \begin{tabular}{lr}
      \begin{minipage}{0.4\textwidth}
        \begin{tabular}{l|rrr}
          $\approx |E|$ & \textsf{mde} & \textsf{lde} & CPU \\ \hline
          50 & 1.6256 & 4.8607 & 1044.06 \\
          100 & 4.3829 & 13.6181 & 1000.95 \\
          150 & 4.5666 & 13.9273 & 1317.59 \\
          200 & 6.6131 & 16.7588 & 1406.54
        \end{tabular} 
      \end{minipage} & 
      \begin{minipage}{0.58\textwidth}
        \includegraphics[width=\textwidth,height=0.2\textheight]{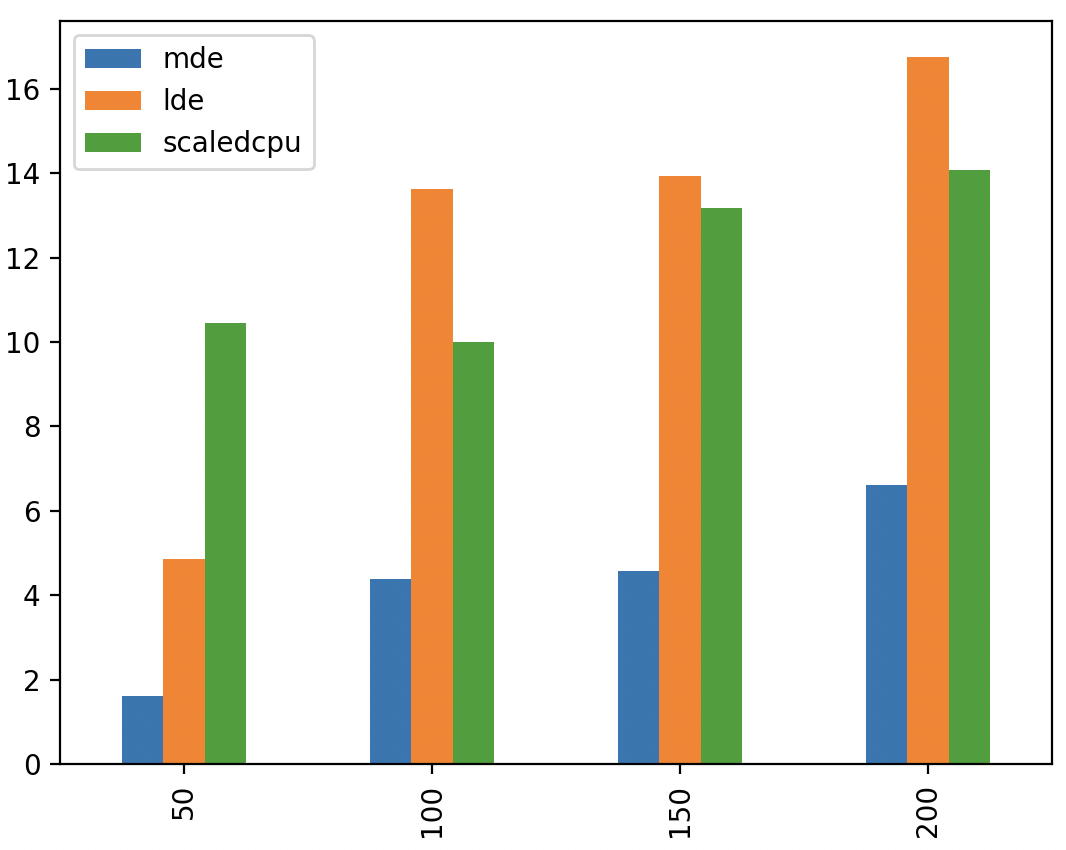}
      \end{minipage}
    \end{tabular}
  \end{center}
  \caption{Average results on $|E|$ for the graph class $\mathcal{R}$ on UDGP.}
  \label{t:udgp:Gedges}
\end{table}

\begin{table}[!ht]
  \begin{center}
    \begin{tabular}{lr}
      \begin{minipage}{0.4\textwidth}
        \begin{tabular}{l|rrr}
          $\approx$ \textbf{density} & \textsf{mde} & \textsf{lde} & CPU \\ \hline
          0.4 & 2.4903 & 8.0731 & 987.70 \\
          0.5 & 4.9176 & 15.1659 & 901.58 \\
          0.6 & 0.5023 & 2.1405 & 1088.01 \\
          0.7 & 4.3175 & 14.0233 & 790.11 \\
          0.8 & 3.4730 & 10.0472 & 1306.70 \\
          0.9 & 3.4126 & 9.6798 & 1231.16 \\
          1.0 & 2.8122 & 8.0143 & 1060.42
        \end{tabular} 
      \end{minipage} & 
      \begin{minipage}{0.58\textwidth}
        \includegraphics[width=\textwidth,height=0.2\textheight]{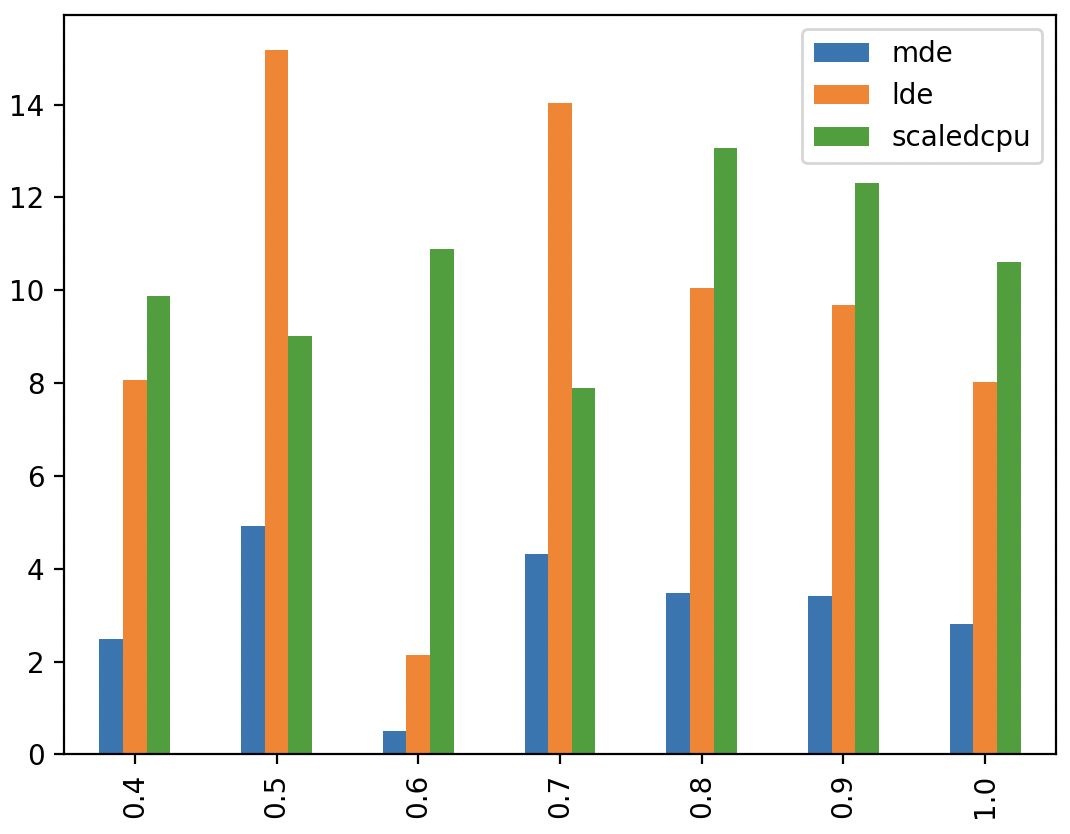}
      \end{minipage}
    \end{tabular}
  \end{center}
  \caption{Average results on edge density for the graph class $\mathcal{R}$ on UDGP.}
  \label{t:udgp:graphdens}
\end{table}

The results are shown in Tables \ref{t:udgp:Gvtx}-\ref{t:udgp:Gftp}. The only story they tell is that the realization errors are always large, even when the reconstructed graph $G_\alpha$ is isomorphic to the original graph $G$ (Table \ref{t:udgp:gphsim}).

\begin{table}[!ht]
  \begin{center}
    \begin{tabular}{lr}
      \begin{minipage}{0.4\textwidth}
        \begin{tabular}{l|rrr}
          $\approx$ \texttt{gphsim} & \textsf{mde} & \textsf{lde} & CPU \\ \hline
          0.1 & 0.4060 & 3.0809 & 1804.01 \\
          0.2 & 3.0043 & 9.5707 & 755.69 \\
          0.3 & 3.2969 & 10.5904 & 1105.91 \\
          0.4 & 4.2320 & 12.1688 & 1278.52 \\
          1.0 & 2.1472 & 5.7603 & 1045.87
        \end{tabular} 
      \end{minipage} & 
      \begin{minipage}{0.58\textwidth}
        \includegraphics[width=\textwidth,height=0.2\textheight]{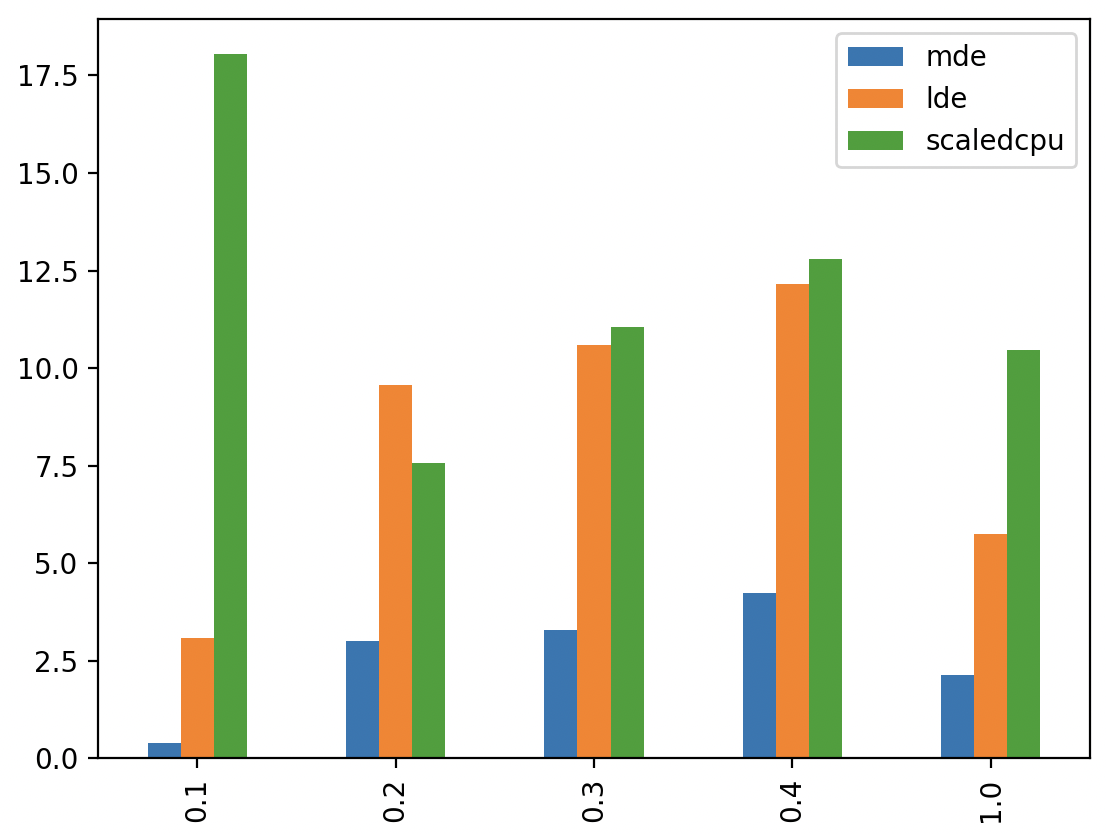}
      \end{minipage}
    \end{tabular}
  \end{center}
  \caption{Average results on graph similarity (\texttt{gphsim}) for the graph class $\mathcal{R}$ on UDGP.}
  \label{t:udgp:gphsim}
\end{table}

A more in-depth look at the results reveals that the global MINLP solver terminated naturally on 40 (instance, MINLP formulation) pairs out of the 152 pairs tested. Of these, only 6 yielded \textsf{mde} and \textsf{lde} measures within $O(10^{-3})$ (involving the smallest instance sizes and mainly the \textsf{upushpull} formulation) and only 3 within $O(10^{-5})$: (\textsf{euclid-5\_0.9},\textsf{upushpull}), (\textsf{euclid-8\_0.4},\textsf{upushpull}), (\textsf{euclid-8\_0.4},\textsf{system1}).

\begin{table}[!ht]
  \begin{center}
    \begin{tabular}{lr}
      \begin{minipage}{0.45\textwidth}
        \begin{tabular}{l|rrr}
          \textbf{formulation} & \textsf{mde} & \textsf{lde} & CPU \\ \hline
          \textsf{ucycsimplesys1} & 1.1592 & 5.0889 & 1620.06 \\
          \textsf{upushpull} & 4.7066 & 12.3713 & 993.11 \\
          \textsf{uquartic} & 5.5397 & 13.5655 & 1195.73 \\
          \textsf{uquarticcont} & 0.2852 & 1.4838 & 59.50 \\
          \textsf{usystem1} & 2.9467 & 11.0602 & 1368.15 \\
          \textsf{usystem2} & 4.3407 & 12.0299 & 1301.94
        \end{tabular} 
      \end{minipage} & 
      \begin{minipage}{0.54\textwidth}
        \includegraphics[width=\textwidth,height=0.2\textheight]{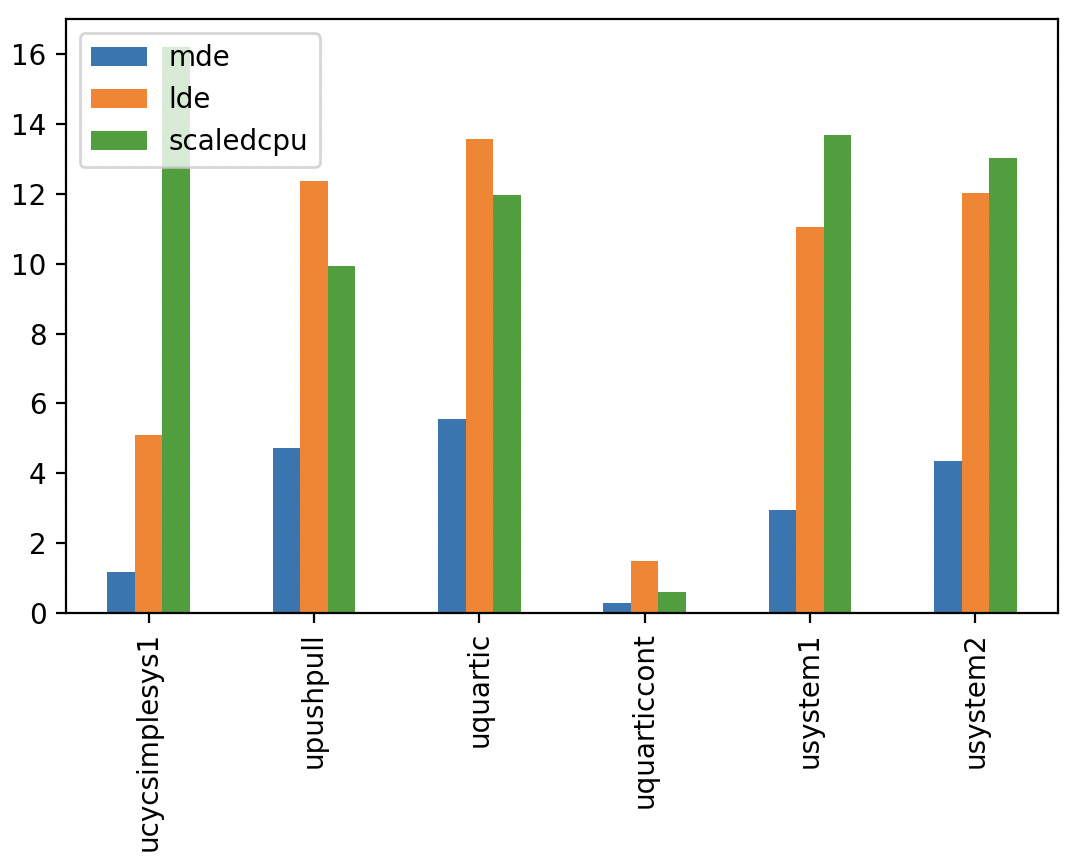}
      \end{minipage}
    \end{tabular}
  \end{center}
  \caption{Average results on formulation type for the graph class $\mathcal{R}$ on UDGP.}
  \label{t:udgp:Gftp}
\end{table}

In particular, the fact that a global MINLP solver terminates naturally on 40 (instances, formulation) pairs, but only on 6 does it find an optimum that looks close enough to a global optimum, denotes a high level of degeneration and a general lack of constraint qualification conditions in the local optimization algorithms implemented by the local NLP subsolver(s) within the global MINLP solver \cite{waechter,leoinbook}. We therefore conclude that the current state of affairs with MINLP formulations and their global solvers is not mature enough for the UDGP. The only usable formulation is continuous nonconvex NLP \textsf{uquarticcont}. 

\subsubsection{The 309-graph collection $\mathcal{G}$}
\label{s:udgp:gph}
As in the DGP case (see Sect.~\ref{s:dgp:gph}), we use this benchmark to verify whether it is better to use the matrix formulation process or not. This test, however, involves some differences with respect to the DGP case.
\begin{enumerate}
\item We established in Sect.~\ref{s:udgp:euclgph} that non-matrix MINLP formulations of the UDGP are ill-behaved to the point of being next to useless, even when the solver achieves a natural termination to what should be a global optimum (but which most of the times is not). This leaves \textsf{uquarticcont} as the only non-matrix formulation. We solve it using a local NLP solver within the MS algorithm (Alg.~\ref{a:ms}), since a global nonconvex NLP solver would be too time-consuming.
\item Local optima of MINLP may be as hard to find (both theoretically and practically) as global optima, which prevents the use of ``refinement'' in practice. Again, this leaves \textsf{uquarticcont} (a nonconvex NLP) as the only possible alternative in the area of non-matrix formulations of the UDGP: since \textsf{quarticcont} is continuous, local optima can be found rapidly. The downside is that the $y$ variables are likely to attain non-integral values at local optima $(X',y')$, which prevents the construction of the graph $G_\alpha$.\label{point2}
\item As mentioned in Sect.~\ref{s:udgp:post}, we have two possible matrix formulation processes. The first uses the matrix solution $X'$ followed by rank reduction and refinement over the reconstructed graph $G_\alpha$. The second discards $X'$ and solves the DGP instance $(K,G_\alpha)$. In the first case, refinement only involves a single call to a local NLP solver from the starting point $x'$ obtained using rank reduction. In the second case we can use any NLP solver with any DGP formulation. Given point \ref{point2}.~above, we focus on the second process: we use matrix formulations only in order to reconstruct $G_\alpha$, and then solve the resulting DGP instance.
\end{enumerate}

The test we run therefore consists in comparing:
\begin{enumerate}[A.]
\item results on \textsf{uquarticcont} solved using a a local NLP solver (IPOPT) within MS, and
\item results from matrix formulations yielding a DGP instance $(K,G_\alpha)$ solved using the \textsf{quartic} formulation (chosen as the best non-matrix formulation in Sect.~\ref{s:dgp:gph}) by means of the local NLP solver IPOPT within MS (5 iterations) acting on the \textsf{quartic} formulation. 
\end{enumerate}
This comparison is reported in Table \ref{t:udgp:gph:form}, with average performance measures aggregated by formulation type. We denote the mixed-integer SDP (MISDP) formulation used in the UDGP by \textsf{umisdp}, the mixed-integer DDP (MIDDP) matrix formulation by \textsf{umiddp}, and the mixed-integer dual DDP matrix formulation by \textsf{umidualddp}. 
\begin{table}[!ht]
  \begin{center}
    \begin{tabular}{lr}
      \begin{minipage}{0.50\textwidth}
        \begin{tabular}{l|rrr}
          \textbf{formulation} & \textsf{mde} & \textsf{lde} & CPU \\ \hline
          \textsf{umiddp} & 0.0360 & 0.2149 & 690.43 \\
          \textsf{umidualddp} & 0.1387 & 0.6687 & 1791.72 \\
          \textsf{umisdp} & 0.0914 & 0.4381 & 1862.70 \\
          \textsf{uquarticcont} & 0.0889 & 0.3045 & 17416.50 
        \end{tabular} 
      \end{minipage} & 
      \begin{minipage}{0.48\textwidth}
        \includegraphics[width=1\textwidth]{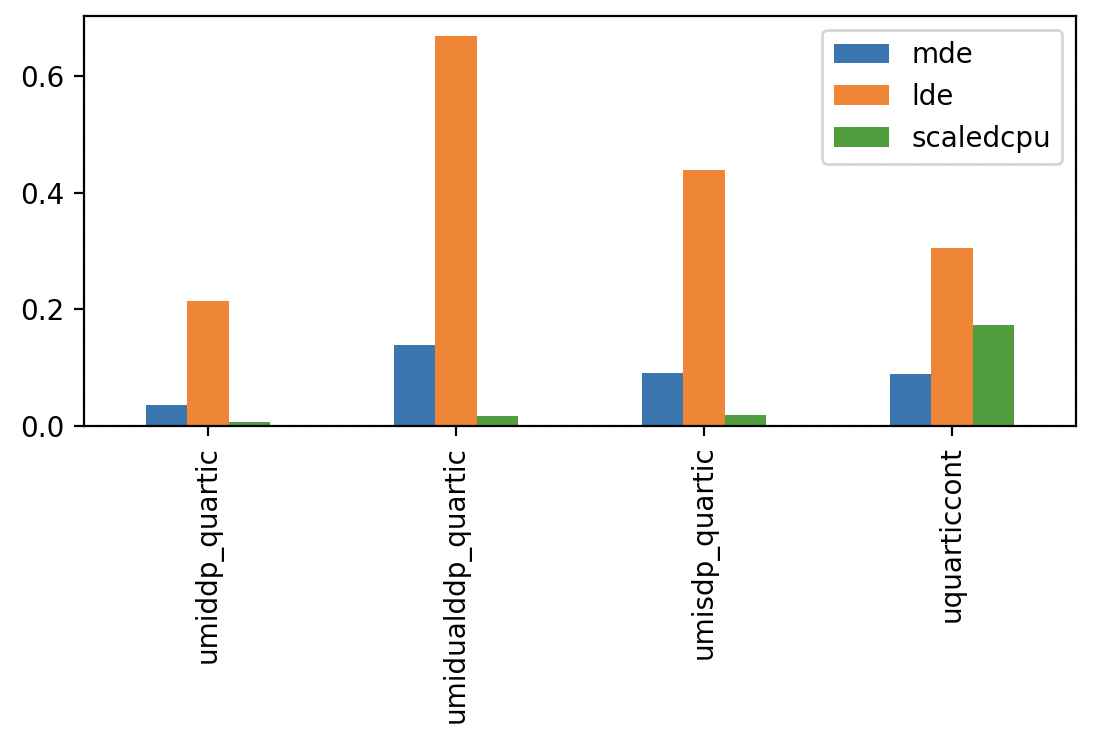}
      \end{minipage}
    \end{tabular}
  \end{center}
  \caption{Average results on formulation type for the graph class $\mathcal{G}$ on UDGP.}
  \label{t:udgp:gph:form}
\end{table}
The best performance in both solution quality and CPU time is given by \textsf{umiddp\_quartic}.

We also look at other result aggregations: by vertex cardinality (Table \ref{t:udgp:gph:vtx}), by edge cardinality (Table \ref{t:udgp:gph:edge}), and by graph similarity (Table \ref{t:udgp:gph:gphsim}). We recall that the graph similarity score \textsf{gphsim} measures the success of the graph reconstruction step from distance values in UDGPs (see Alg.~\ref{alg:gphsim}).

\begin{table}[!ht]
  \begin{center}
    \begin{tabular}{lr}
      \begin{minipage}{0.45\textwidth}
        \begin{tabular}{l|rrr}
          $\approx |V|$ & \textsf{mde} & \textsf{lde} & CPU \\ \hline
          3 & 0.0000 & 0.0000 & 8.82 \\
          4 & 0.0000 & 0.0000 & 8.84 \\
          5 & 0.0070 & 0.0140 & 8.85 \\
          6 & 0.0098 & 0.0471 & 9.53 \\
          7 & 0.0137 & 0.0325 & 9.11 \\
          9 & 0.0074 & 0.0304 & 11.54 \\
          10 & 0.0093 & 0.0447 & 10.26 \\
          15 & 0.0097 & 0.0671 & 462.55 \\
          16 & 0.0034 & 0.0333 & 238.49 \\
          20 & 0.0431 & 0.2903 & 395.46 \\
          21 & 0.0111 & 0.0784 & 486.29 \\
          25 & 0.0045 & 0.0339 & 331.86 \\
          28 & 0.0065 & 0.0605 & 681.86 \\
          35 & 0.1135 & 0.5495 & 1851.65 \\
          36 & 0.0032 & 0.0318 & 358.64 \\
          40 & 0.0000 & 0.0006 & 43.73 \\
          49 & 0.0049 & 0.0622 & 1886.85 \\
          50 & 0.1943 & 0.7452 & 7485.38 \\
          60 & 0.0005 & 0.0156 & 370.49 \\
          70 & 0.0012 & 0.0209 & 1420.22 \\
          100 & 0.0093 & 0.1128 & 17656.13 \\
          105 & 0.0114 & 0.1419 & 18530.49 \\
          150 & 0.0757 & 0.3191 & 227135.50
          \end{tabular} 
      \end{minipage} & 
      \begin{minipage}{0.53\textwidth}
        \includegraphics[width=\textwidth,height=0.43\textheight]{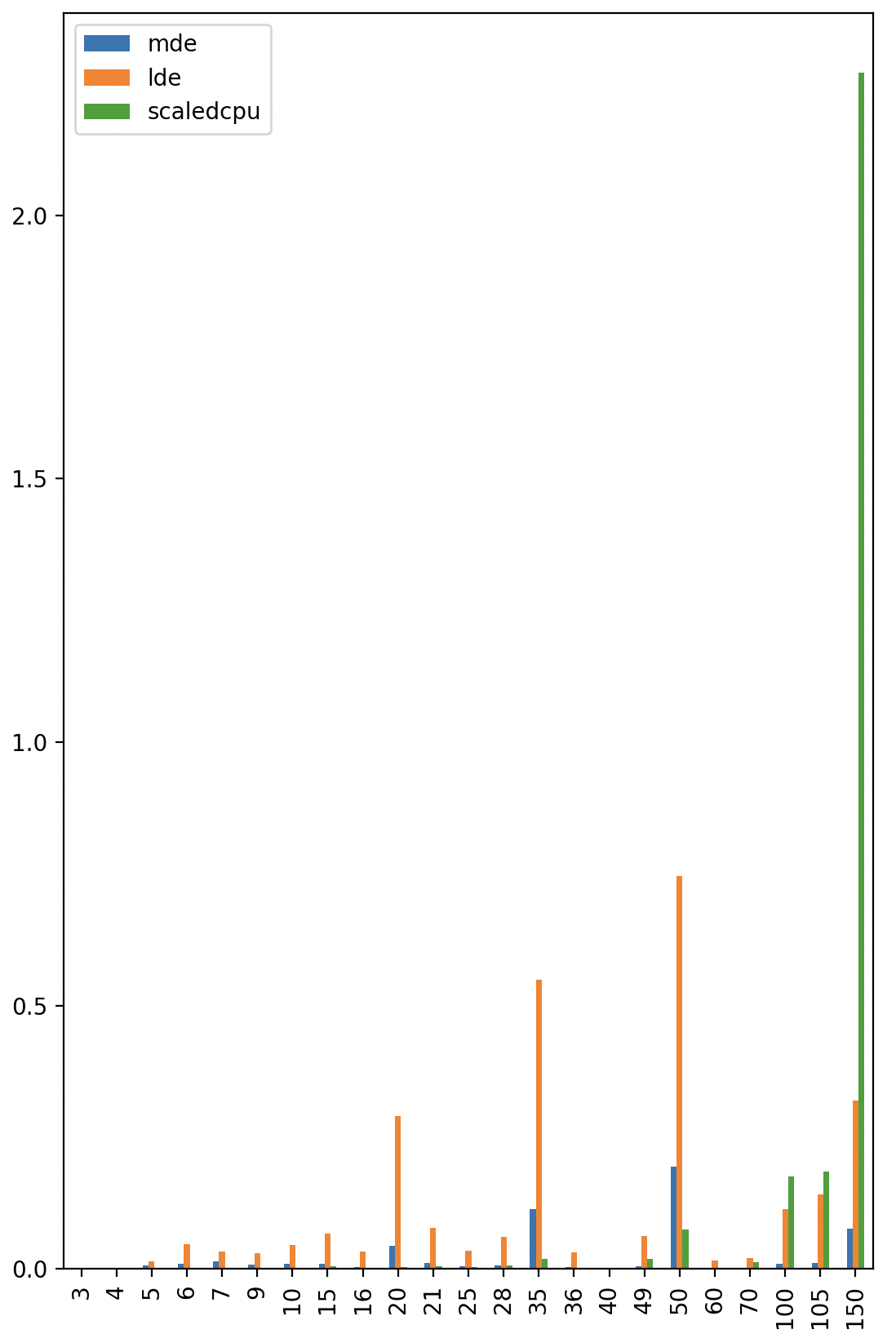}
      \end{minipage}
    \end{tabular}
  \end{center}
  \caption{Average results on $|V|$ for the graph class $\mathcal{G}$ on UDGP.}
  \label{t:udgp:gph:vtx}
\end{table}

\begin{table}[!ht]
  \begin{center}
    \begin{tabular}{lr}
      \begin{minipage}{0.45\textwidth}
        \begin{tabular}{l|rrr}
          $\approx |E|$ & \textsf{mde} & \textsf{lde} & CPU \\ \hline
          50 & 0.0055 & 0.0416 & 150.87 \\
          100 & 0.0108 & 0.0937 & 991.03 \\
          150 & 0.0279 & 0.2748 & 6839.26 \\
          200 & 0.0985 & 0.6401 & 3502.28 \\
          250 & 0.0238 & 0.1488 & 34620.31 \\
          300 & 0.0481 & 0.2441 & 26749.03 \\
          350 & 1.2046 & 6.4760 & 5869.55 \\
          400 & 0.0885 & 0.3493 & 12939.00 \\
          450 & 0.0449 & 0.1929 & 16695.82 \\
          500 & 0.0762 & 0.3332 & 21461.61 \\
          550 & 1.0753 & 4.1404 & 15175.19 \\
          650 & 2.4117 & 9.8750 & 39603.21 \\
          700 & 1.3523 & 7.3373 & 39099.54 \\
          1100 & 7.4280 & 22.6607 & 23115.13 \\
          1150 & 4.7297 & 13.1730 & 23067.89
        \end{tabular} 
      \end{minipage} & 
      \begin{minipage}{0.53\textwidth}
        \includegraphics[width=\textwidth,height=0.3\textheight]{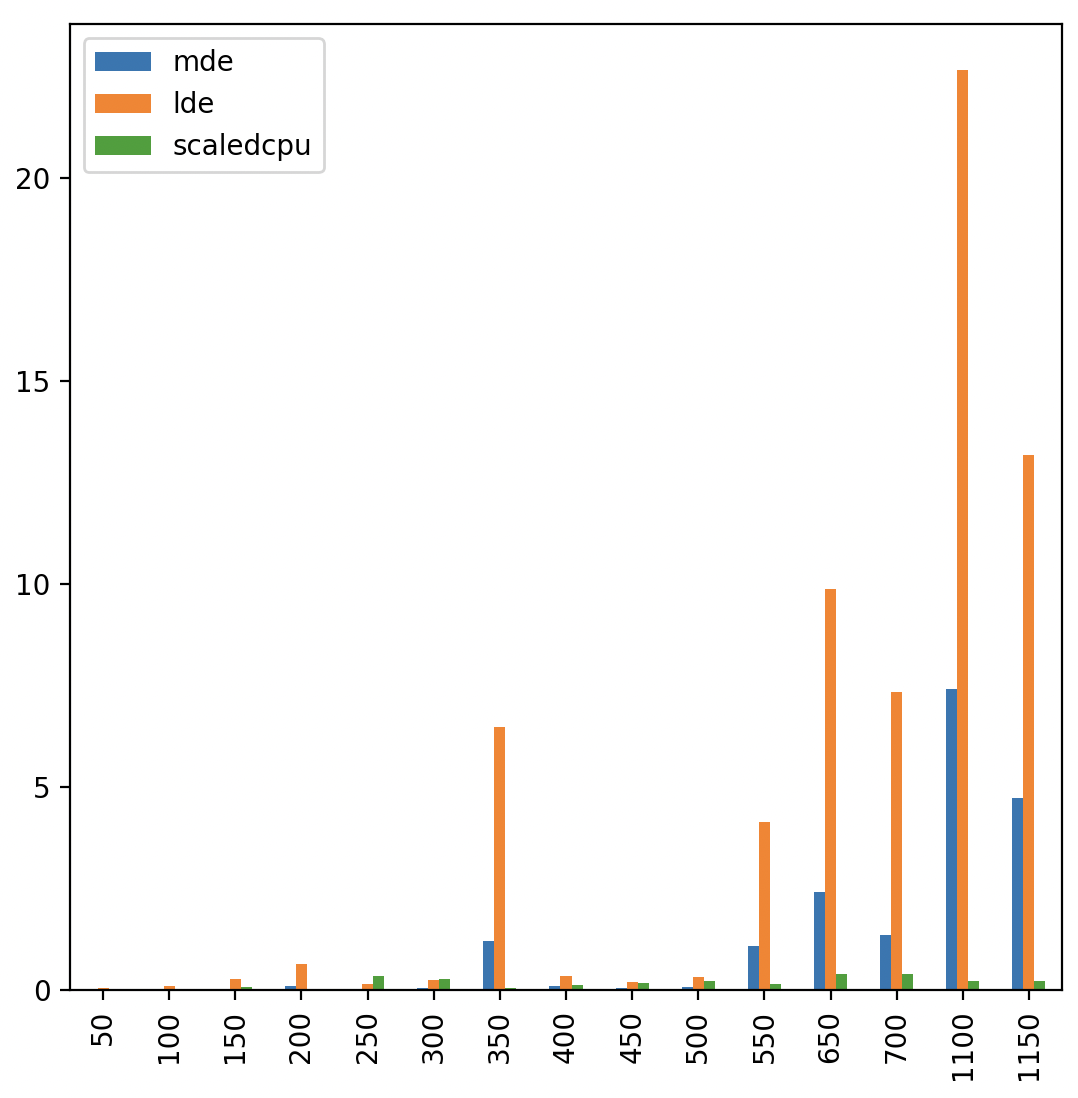}
      \end{minipage}
    \end{tabular}
  \end{center}
  \caption{Average results on $|E|$ for the graph class $\mathcal{G}$ on UDGP.}
  \label{t:udgp:gph:edge}
\end{table}

\begin{table}[!ht]
  \begin{center}
    \begin{tabular}{lr}
      \begin{minipage}{0.45\textwidth}
        \begin{tabular}{l|rrr}
          $\approx$ \texttt{gphsim} & \textsf{mde} & \textsf{lde} & CPU \\ \hline
          0.0 & 0.0062 & 0.0530 & 3741.30 \\
          0.1 & 0.0149 & 0.1102 & 4205.63 \\
          0.2 & 0.1318 & 0.6932 & 6163.91 \\
          0.3 & 0.0074 & 0.1082 & 241.70 \\
          0.4 & 2.2244 & 7.6193 & 9435.18 \\
          0.5 & 0.0025 & 0.0075 & 18.30 \\
          0.6 & 0.0070 & 0.0943 & 25506.97 \\
          0.7 & 0.0000 & 0.0000 & 121338.65 \\
          0.8 & 0.0080 & 0.0995 & 9913.92 \\
          0.9 & 0.0113 & 0.0968 & 11824.05 \\
          1.0 & 0.0459 & 0.2785 & 1437.42
        \end{tabular} 
      \end{minipage} & 
      \begin{minipage}{0.53\textwidth}
        \includegraphics[width=\textwidth,height=0.21\textheight]{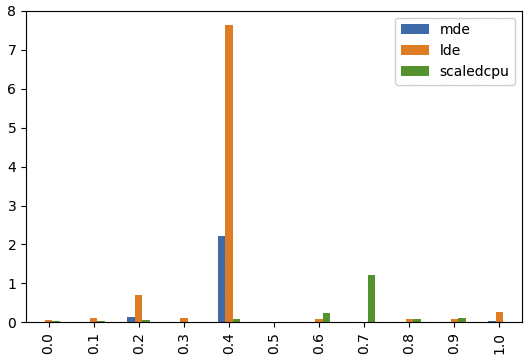}
      \end{minipage}
    \end{tabular}
  \end{center}
  \caption{Average results on graph similarity (\texttt{gphsim}) for the graph class $\mathcal{G}$ on UDGP.}
  \label{t:udgp:gph:gphsim}
\end{table}

As in Sect.~\ref{s:dgp:gph}, for $\mathcal{G}$ we also present results aggregations according to graph type in Table \ref{t:udgp:gph:gtp}, obtaining results close to those of Table \ref{t:Ggtp}.
\begin{table}[!ht]
  \begin{center}
    \begin{minipage}{0.43\textwidth}
      \begin{tabular}{l|rrr}
        \textbf{graph type} & \textsf{mde} & \textsf{lde} & CPU \\ \hline
        Walmostreg & 0.0121 & 0.1218 & 1780.42 \\
        Wbipartite & 0.0070 & 0.0895 & 9740.23 \\
        Wcliquechain & 0.0361 & 0.2257 & 3281.06 \\
        Wcluster & 0.0364 & 0.2062 & 4254.55 \\
        Wdmdgp & 0.0360 & 0.2195 & 3533.00 \\
        Wmesh & 0.0041 & 0.0558 & 729.02 \\
        Wpowerlaw & 0.0077 & 0.0914 & 708.66 \\
        Wrandom & 0.0114 & 0.1228 & 1669.00 \\
        Wtorus & 0.0176 & 0.1138 & 1275.43 \\
        Wtriangle & 0.0200 & 0.1250 & 638.01 \\
        Wtrichain & 0.0074 & 0.0212 & 8.97 \\
        Wtripartite & 0.0399 & 0.2537 & 63168.87 \\
        almostreg & 0.0000 & 0.0000 & 283.50 \\
        beeker\_glusa & 0.0110 & 0.0333 & 9.02 \\
        bipartite & 0.0000 & 0.0000 & 3006.49 \\
        cliquechain & 0.0000 & 0.0000 & 1377.71 \\
        cluster & 0.0034 & 0.0107 & 2742.12 \\
        dmdgp & 0.0000 & 0.0000 & 1909.70 \\
        local & 0.0058 & 0.0476 & 707.85 \\
        mesh & 0.0000 & 0.0000 & 113.17 \\
        norm & 1.3154 & 5.6693 & 9234.31 \\
        powerlaw & 0.0000 & 0.0000 & 116.94 \\
        random & 0.0000 & 0.0000 & 256.40 \\
        torus & 0.0000 & 0.0000 & 327.64 \\
        triangle & 0.0000 & 0.0000 & 22.26 \\
        trichain & 0.0000 & 0.0000 & 9.05 \\
        tripartite & 0.0000 & 0.0000 & 42806.12
        \end{tabular}
    \end{minipage}
    \begin{minipage}{0.55\textwidth}
      \includegraphics[width=\textwidth]{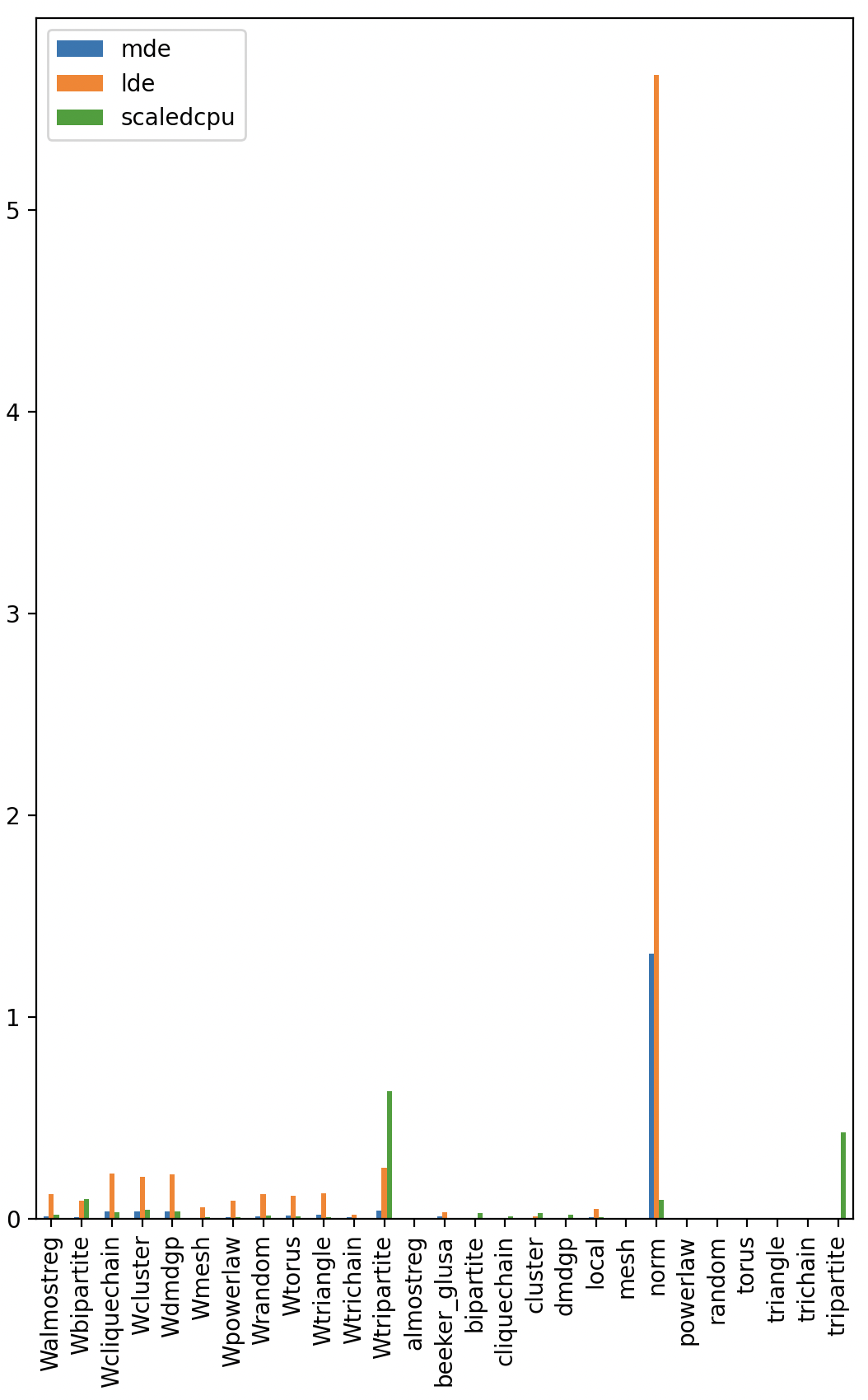}
    \end{minipage}  
  \end{center}
  \caption{Average results on graph types and the corresponding bar plot for the graph class $\mathcal{G}$ on UDGP.}
  \label{t:udgp:gph:gtp}
\end{table}

\subsubsection{The protein graph collection $\mathcal{P}$}
\label{s:udgp:prot}
We attempt to reconstruct the shape of proteins from a partial set of inter-atomic distances without their adjacencies (see Sect.~\ref{s:inst:prot} and \ref{s:inst:udgp}) by using the methods that appear more promising from earlier UDGP experiments (Sect.~\ref{s:udgp:euclgph}-\ref{s:udgp:gph}), namely: carrying out the graph reconstruction using the mixed-integer DDP and dual DDP matrix formulations, and then solving the resulting DGP instance using IPOPT within the MS algorithm (Alg.~\ref{a:ms}) with a 5 iterations limit. IPOPT was deployed on the \textsf{quartic} formulation. 

Some attempts with the usual time limit (1800s) imposed on the (matrix) MILP formulations only yielded solutions for the \textsf{tiny} instance. We therefore removed the largest protein instances from our benchmark, and only focused on a subset of six protein instances (from \textsf{tiny} up to \textsf{1guu} in Table \ref{t:protsz}). We allowed Gurobi a maximum of 12h of CPU time. Even so, with the exception of \textsf{tiny}, we only obtained solutions from the mixed-integer dual DDP formulation, which therefore becomes, \textit{de facto}, the only eligible formulation to successfully obtain approximate solutions of the UDGP on protein instances.

We remark that, in \cite{udgp}, the \textsf{uquarticcont} formulation in Eq.~\eqref{uquartic_cont} was successfully used to solve random UDGP instances (generated similarly to \cite{Lav05}) with up to 400 atoms in just over 13h of CPU time by the Baron \cite{baron} solver. But instances generated from PDB information (Sect.~\ref{s:inst:prot}) are generally more difficult --- the random instances from \cite{Lav05} have more and better distributed solutions.

Given the dearth of instances of this benchmark there is no need for aggregated and averaged results. We give full results in Table \ref{t:udgp:prot}.
\begin{table}[!ht]
  \begin{center}
    \begin{tabular}{ll|rrrr}
      \textbf{instance} & \textbf{formulation} & \texttt{gphsim} & \textsf{mde} & \textsf{lde} & CPU \\ \hline \hline
      \textsf{tiny} & \textsf{umiddp} & 0.989 & 0.000 & 0.006 & 2257.76 \\ \hline
      \textsf{tiny} & \textsf{umidualddp} & 0.216 & 0.154 & 1.962 & 58.53  \\
      \textsf{1guu-1} & \textsf{umidualddp} & 0.917 & 0.126 & 1.513 & 335825.94 \\
      \textsf{1guu-4000} & \textsf{umidualddp} & 0.929 & 6.429 & 10.300 & 4143.67 \\
      \textsf{C0030pkl} & \textsf{umidualddp} & 0.891 & 7.912 & 11.704 & 25833.85 \\
      \textsf{1PPT} & \textsf{umidualddp} & 0.034 & 10.579 & 14.236 & 20248.91 \\
      \textsf{1guu} & \textsf{umidualddp} & 0.619 & 13.307 & 16.749 & 14289.60 \\
    \end{tabular} 
  \end{center}
  \caption{Results for (part of) the graph class $\mathcal{G}$ on UDGP.}
  \label{t:udgp:prot}
\end{table}
The results in table \ref{t:udgp:prot} appear to indicate that the MIDDP formulations applied to proteins are either infeasible or have few solutions that are hard to find. Instead, dual DDP relaxations are always feasible whenever the originating SDP relaxation is feasible, and so are their mixed-integer counterparts, provided that the given distance values are compatible with an $\ell_2$ metric: this may well be the reason that the \textsf{umidualddp} matrix formulation is the only one that scores some success on protein instances.

And yet, dual DDPs (both continuous and mixed-integer) generally produce indefinite matrix solutions, which, after dimensional reduction, contain considerable error. Even if we only employ the \textsf{umidualddp} formulation to reconstruct the graph from the values of the assignment variables $y$, we would expect an indefinite matrix to carry more reconstruction error than a PSD one. And, indeed, most of the larger instances have disappointingly large \textsf{mde} and \textsf{lde} error measures. While a part of this error can be attributed to the MS algorithm configured with only 5 iterations, another is certainly due to poor graph reconstruction. That no instance displays a graph reconstruction similarity (\texttt{gphsim}) value of exactly $1.0$ (i.e., a perfect reconstruction) is to be expected: there will be many graphs compatible with the same distance values. But not all of these graphs will be realizable in $K=3$ dimensions. We can see this in the \textsf{tiny} instance, the only one that could be solved by both \textsf{umiddp} and \textsf{umidualddp}. The \textsf{umiddp} solution has a $\mathtt{gphsim}$ score that is very close to $1.0$, and almost zero \textsf{mde} and \textsf{lde} error measures. On the contrary, the \textsf{umidualddp} solution has a $\mathtt{gphsim}$ score of $0.216$ and considerably large error measures given the small instance size (38 atoms). 

The only encouraging result is \textsf{1guu-1}, the size of which is nontrivial (150 atoms). The \textsf{umidualddp} formulation provided a $\mathtt{gphsim}$ score of $0.917$, and IPOPT, taking over 92h of CPU time, was able to find a solution with tolerable error measures. The rest of the results show error measures that denote bad graph reconstructions and realizations.

\section{Conclusion}
\label{s:concl}
In this survey we have surveyed MP formulation-based methods for solving distance geometry problems, even when the input is a list of distance values instead of a weighted graph (i.e., the distances are not assigned to graph edges). The computational benchmarks established that while formulation-based methodologies are useful to solve even fairly large DGP instances (derived from protein data) in 3D, similar methodologies are not able to solve UDGP instances of the same size. 

\bibliographystyle{plain}
\bibliography{dr1}
\end{document}